\newtheorem{theorem}{Theorem}[section]
\newtheorem{proposition}{Proposition}[section]
\newtheorem{lemma}{Lemma}[section]
\numberwithin{equation}{section}
\def\d{\mathrm{d}}
\def\exp{\mathrm{exp}}
\newcounter{wronumber}\setcounter{wronumber}{1}
\begin{document}
\title{The Incompressible Navier-Stokes-Fourier Limit from Boltzmann-Fermi-Dirac Equation}

\author[Ning Jiang]{Ning Jiang}
\address[Ning Jiang]{\newline School of Mathematics and Statistics, Wuhan University, Wuhan, 430072, P. R. China}
\email{njiang@whu.edu.cn}

\author[Linjie Xiong]{Linjie Xiong}
\address[Linjie Xiong]
{\newline  School of Mathematics, Hunan University, Changsha 410082, P. R. China}
\email{xlj@hnu.edu.cn}

\author[Kai Zhou]{Kai Zhou}
\address[Kai Zhou]
		{\newline School of Mathematics and Statistics, Wuhan University, Wuhan, 430072, P. R. China}
\email{kaizhoucm@whu.edu.cn}
\thanks{\today}

\maketitle

\begin{abstract}
 We study Boltzmann-Fermi-Dirac equation when quantum effects are taken into account in dilute gas dynamics. By employing new estimates on trilinear terms of collision kernels, we prove the global existence of  the classical solution to Boltzmann-Fermi-Dirac equation near equilibrium. Furthermore, the limit from Boltzmann-Fermi-Dirac equation to incompressible Navier-Stokes-Fourier equations is justified rigorously. The corresponding formal analysis was given in the thesis of Zakrevskiy \cite{Zakrevskiy}\\

   \noindent\textsc{Keywords.} Boltzmann-Fermi-Dirac equation, Classical solutions, Navier-Stokes-Fourier Limit.\\

\end{abstract}





\section{Introduction}\label{sec1}
\subsection{The Boltzmann-Fermi-Dirac equation.} The evolution of quantum particles obeying Fermi-Dirac statistics can be described by Boltzmann-Fermi-Dirac equation:
\begin{equation}\label{1eq1}
\left\{
\begin{aligned}
    &\partial_t F + v\cdot \nabla F =C(F),\\
    &F\big|_{t=0}=F_0,
\end{aligned}
\right.
\end{equation}
where $0\leqslant F(t,x,v) \leqslant 1$ is the number density of particles at time $t\geqslant 0$, position $x\in \mathbb{R}^3$, with velocity $v\in \mathbb{R}^3$. The collision integral $C(F)$ takes the form
\begin{equation*}
  \iint\limits_{\mathbb{R}^3\times\mathbb{S}^2}b(v_1-v,\omega)   \Big[\,F^\prime F_1^\prime (1-\delta F)(1-\delta F_1)-FF_1(1-\delta F^\prime)(1-\delta F_1^\prime)\,\Big] \d\omega\d v_1.
\end{equation*}
In the expression above, $\delta=\hbar^3$ is a constant ($\hbar$ is the Planck constant), $F\equiv F(t,x,v)$, $F_1\equiv F(t,x,v_1)$, $F^\prime\equiv F(t,x,v^\prime)$, $F_1^\prime\equiv F(t,x,v_1^\prime)$, and for $\omega\in \mathbb{S}^2$,
\begin{equation}\nonumber
  v^\prime=v+[(v_1-v)\cdot \omega]\omega,\quad v_1^\prime=v_1-[(v_1-v)\cdot \omega]\omega
\end{equation}
are velocities after a collision of two particles with velocities $v$ and $v_1$ before. The particle pairs with the same mass, during the collision, follow the conservation laws of momentum and kinetic energy:
\begin{equation}\nonumber
  v+v_1=v^\prime+v_1^\prime,\qquad |v|^2+|v_1|^2=|v^\prime|^2+|v_1^\prime|^2.
\end{equation}

The collision kernel $b(v_1-v,\omega)$ is an a.e. positive function defined on $\mathbb{R}^3\times\mathbb{S}^2$, which encodes features of the molecular interaction in kinetic theory. Physically, it is assumed to depend only on the modulus of the relative velocity $|v-v_1|$ and on the scalar product $\frac{v_1-v}{|v_1-v|}\cdot\omega$. We assume the collision kernel takes the factor form:
\begin{equation*}
  b(v_1-v,\omega)=|v_1-v|^{\gamma}\hat{b}(\cos\theta),\quad\cos\theta=\frac{v_1-v}{|v_1-v|}\cdot\omega, \quad -3<\gamma\leqslant 1.
\end{equation*}
For $\gamma \geqslant 0$, we call the collision kernel a hard potential; in particular, for $\gamma=0$, we call it a Maxwell collision kernel and for $\gamma=1$, we call the collision hard sphere collision; and for $-3<\gamma<0,$ we call it a soft potential. In Grad angular cutoff, $\hat{b}(\cos\theta)$ satisfies
\begin{equation*}
  \int_{0}^{\pi/2} \hat{b}(\cos\theta)\sin\theta d\theta< +\infty.
\end{equation*}
Throughout the paper, we take $\delta=1$ and the hard sphere collision in \eqref{1eq1} for convenience, namely, the collision kernel has the following explicit expression
\begin{equation}\label{1eq4}
  b(v_1-v,\omega)=|(v_1-v)\cdot\omega|.
\end{equation}

Next, we list some basic properties of Boltzmann-Fermi-Dirac equation \eqref{1eq1}. First, the collision operator $C(F)$ satisfies the conservation laws:
\begin{equation}\label{1eq5}
  \int_{\mathbb{R}^3}C(F)\left(\begin{array}{c}
                                 1 \\
                                 v \\
                                 |v|^2
                               \end{array}\right)\d v=0.
\end{equation}

Next, as an analog of Boltzmann $H$-theorem, the following three assertions are equivalent \cite{Zakrevskiy}:
\begin{itemize}
  \item[(1)] $C(F)=0$;
  \item[(2)] The entropy production rate is zero,
  \begin{equation*}
    \int_{\mathbb{R}^3} C(F)\ln\frac{1-F}{F}\d v=0;
  \end{equation*}
  \item[(3)] $F$ is a Fermi-Dirac distribution,
  \begin{equation*}
    F_{f,u,\theta}(t,x,v)=\frac{1}{1+\exp(\frac{|v-u|^2}{2\theta}-f)}.
  \end{equation*}
\end{itemize}
In (3) above, $\theta=\theta(t,x)>0$ is the temperature, $u=u(t,x)\in \mathbb{R}^3$ is the bulk velocity, and $f(t,x)/\theta(t,x)$ is the total chemical potential. By the appropriate
choice of Galilean frame, the Fermi-Dirac distribution $F$ can be taken with $(f,u,\theta)=(1,0,1)$, we denote it by
\begin{equation}\label{global-equi}
  \displaystyle\mu(v)=\frac{1}{1+e^{\frac{|v|^2}{2}-1}}.
\end{equation}

The main goals of the current paper are the global existence of classical solutions to Boltzmann-Fermi-Dirac equation, and the connection between kinetic theory for Fermi-Dirac statistics and macroscopic fluid equations. In particular, this paper focuses on the incompressible Navier-Stokes scaling, under which the equation \eqref{1eq1} can be rescaled as
\begin{equation}\label{1eq7}
  \left\{
  \begin{aligned}
  \partial_t F_{\epsilon} + \frac{1}{\epsilon}v\cdot \nabla F_{\epsilon} =&\frac{1}{\epsilon^2}C(F_{\epsilon}),\\
  F_{\epsilon}\big|_{t=0}=&F_{\epsilon,0}.
  \end{aligned}
  \right.
\end{equation}
Here $\epsilon$ is the so-called Knudsen number which is the ratio between the mean free path and macroscopic length scale. It can be derived the incompressible Navier-Stokes equations (for details, see Chapter 3 of the thesis of Zakrevskiy \cite{Zakrevskiy}) from \eqref{1eq7} by fluctuating around the global Fermi-Dirac distribution $\mu$ with size $\epsilon$:
\begin{equation*}
  F_{\epsilon}=\mu+\epsilon\mu(1-\mu)g_{\epsilon}.
\end{equation*}
More specifically, putting the perturbation above into \eqref{1eq7} yields:
\begin{equation}\label{1eq8}
  \left\{
  \begin{aligned}
  \partial_t g_{\epsilon} + \frac{1}{\epsilon}v\cdot \nabla_x g_{\epsilon} +\frac{1}{\epsilon^2}Lg_{\epsilon} &=\frac{1}{\epsilon}Q(g_{\epsilon},g_{\epsilon})+T(g_{\epsilon},g_{\epsilon},g_{\epsilon}),\\
  g_{\epsilon}(t,x,v)|_{t=0}&=g_{\epsilon,0}(x,v)\,.
  \end{aligned}
\right.
\end{equation}
From now on, we use the notations
  \begin{equation*}
    f=f(v),\quad f_1=f(v_1),\quad f^\prime=f(v^\prime) ,\quad f_1^\prime=f(v_1^\prime),
  \end{equation*}
for any function $f$, and
\begin{equation*}
  \mathcal{N}=\mu_1^\prime \mu^\prime(1-\mu_1)(1-\mu)=\mu_1\mu(1-\mu_1^\prime)(1-\mu^\prime)\,.
\end{equation*}
Then the linear operator $L$ in \eqref{1eq8} is given by
\begin{equation}\label{1eq9}
  Lg=\nu(v)g-Kg,
\end{equation}
and $K=K_2-K_1$, where the collision frequency $\nu(v)$ is
\begin{equation}\label{1eq10}
  \nu(v)=\displaystyle\iint\limits_{\mathbb{R}^3\times\mathbb{S}^2}|v_1-v|\,|\cos\theta| \frac{\mathcal{N}}{\mu(1-\mu)}\d\omega \d v_1,
\end{equation}
the operator $K_1$ is
\begin{equation}\label{1eq11}
  K_1g=\displaystyle\iint\limits_{\mathbb{R}^3\times\mathbb{S}^2}|v_1-v|\,|\cos\theta| \frac{\mathcal{N}}{\mu(1-\mu)}g_1\d\omega \d v_1,
\end{equation}
and the operator $K_2$ is
\begin{equation}\label{1eq12}
    K_2g=\displaystyle\iint\limits_{\mathbb{R}^3\times\mathbb{S}^2}|v_1-v|\,|\cos\theta| \frac{\mathcal{N}}{\mu(1-\mu)} (g_1^\prime+g^\prime)\d\omega \d v_1.
\end{equation}

Moreover, in \eqref{1eq8}, the bilinear form $Q(f,g)$ is given by
\begin{equation}\label{1eq13}
  \begin{aligned}
  Q(f,g)=&\iint\limits_{\mathbb{R}^3\times\mathbb{S}^2}|v_1-v|\,|\cos\theta|\frac{\mathcal{N}}{\mu(1-\mu)} \bigg\{\bigg(1-\mu_1^\prime-\mu^\prime\bigg)f_1^\prime g^\prime -\bigg(1-\mu_1-\mu\bigg)f_1g\\
  &+\bigg(\mu_1^\prime f_1^\prime +\mu^\prime f^\prime \bigg)g_1-\bigg(f_1^\prime +f^\prime \bigg)\mu_1g_1+\bigg(\mu_1^\prime f_1^\prime +\mu^\prime f^\prime \bigg)g-\bigg(f_1^\prime +f^\prime \bigg)\mu g\bigg\}\d\omega \d v_1\\
  \stackrel{\triangle}{=}&Q_1(f,g)+Q_2(f,g)+\cdots+Q_6(f,g).
\end{aligned}
\end{equation}
and the trilinear form $T(f,g,h)$ is given by
\begin{equation}\label{1eq14}
  \begin{aligned}
  T(f,g,h)=&\iint\limits_{\mathbb{R}^3\times\mathbb{S}^2}|v_1-v|\,|\cos\theta|\frac{\mathcal{N}}{\mu(1-\mu)}\cdot
  \bigg\{\mu_1\mu f_1g\bigg(h_1^\prime +h^\prime\bigg) -\mu_1^\prime\mu^\prime f_1^\prime g^\prime \bigg(h_1+h\bigg)\\
  &+\mu_1\mu_1^\prime\bigg(f_1^\prime g^\prime h_1-f_1gh_1^\prime\bigg) +\mu\mu_1^\prime\bigg(f_1^\prime g^\prime h-f_1gh_1^\prime\bigg)+\mu_1\mu^\prime\bigg(f_1^\prime g^\prime h_1-f_1gh^\prime\bigg) \\
  &+\mu\mu^\prime\bigg(f_1^\prime g^\prime h-f_1gh^\prime\bigg)+f_1g\bigg(\mu_1^\prime h_1^\prime +\mu^\prime h^\prime\bigg) -f_1^\prime g^\prime \bigg(\mu_1h_1+\mu h\bigg)\bigg\}\d\omega \d v_1\\
  \stackrel{\triangle}{=}&T_1(f,g,h)+T_2(f,g,h)+\cdots+T_8(f,g,h).
  \end{aligned}
\end{equation}

\subsection{Well-posedness and hydrodynamic limits.} The Boltzmann-Fermi-Dirac equation, which describes the evolution of rarefied gas with quantum effect, is derived from a modification of the classical Boltzmann equation \cite{ref15}, when the exclusion Pauli principle are taken into account. It is also often called Uehling-Uhlenbeck equation or Nordheim equation. However, different from the classical case, the rigorous derivation of the Boltzmann-Fermi-Dirac equation has not been established. Since the heuristic arguments of Nordheim \cite{ref43}, Uehling and Uhlenbeck \cite{ref46},  rigorous  derivation of the Boltzmann-Fermi-Dirac equation can be found Sphon \cite{ref45}, Erd\"{o}s, Salmhofer and Yau \cite{ref19} and Benedetto, Pulvirenti, Castella and Esposito \cite{ref9}.

For mathematical theory of well-posedness, early results were obtained by Dolbeault \cite{ref18} and Lions \cite{ref37}. They studied the global existence of solutions in mild or distributional sense for the whole space $\mathbb{R}^3$ under some assumptions on the collision kernel. Furthermore, Dolbeault \cite{ref18} obtained that the solution of Boltzmann-Fermi-Dirac equation converges to the solution of the Boltzmann equation as $\delta \to 0$ for very special bounded collision kernel case. Allemand \cite{ref5} extended the results of \cite{ref18} to bounded domains with specular reflection at the boundaries for integrable collision kernels. Alexandre \cite{ref1} obtained another kind of weak solution satisfying the entropy inequality, the so-called $H$-solution. For general initial data, Lu \cite{ref39} studied the global existence and weak stability of weak solution in $\mathbb{T}_x^3$ for very soft potential with a weak angular cutoff. More results are referred to \cite{ref20, ref21, ref38, ref40}.

In the context of classical solution near global equilibrium, much less is known for Boltzmann-Fermi-Dirac equation (after we finished the draft of this paper, we were aware of the just posted paper \cite{Ou-WuL-2021} on this topic). We first review the corresponding results for Boltzmann equation. Ukai \cite{ref47} obtained the first global-in-time smooth solution with cutoff kernel. Later on, Guo \cite{ref27, ref28} developed the so-called nonlinear energy method to get the same type of result for soft potential with $\gamma\geq -3$. For more progress in this direction, we refer to \cite{AMUXY-1, ref11, ref12, GS-JAMS2011, ref29, ref32}.

In the other direction, the hydrodynamic limits from kinetic equations to fluid equations has been very active in recent decades. One of the important feature of kinetic equations is their connection to the fluid equations. The smaller the Knudsen number $\epsilon$ is, the more the dilute gas behaves like a fluid. Mathematically, the so-called hydrodynamic limits are the process that the Knudsen number goes to zero. Depending on the physically scalings, different fluid equations (incompressible of compressible Navier-Stokes, Euler, etc.) can be derived from kinetic equations. 

Bardos and Ukai \cite{ref8} proved the global existence of classical solution $g_\epsilon$ to scaled Boltzmann equation (perturbed around the global Maxwellian with size $\epsilon$) uniformly in $0<\epsilon<1$ for hard potential with cutoff collision kernel. Consequently they justified the limit to incompressible Navier-Stokes equations with small initial data. By employing semigroup approach, Briant \cite{ref10} also proved the same limit on the torus for hard cutoff potential, with convergence rate. Recently, Jiang, Xu and Zhao \cite{ref35} proved again the same limit for a more general class of collision kernel by using non-isotropic norm developed in the series of work \cite{AMUXY-1, AMUXY-2, AMUXY-3, GS-JAMS2011}. For the fluid limits of Boltzmann-Fermi-Dirac equation, Zakrevskiy \cite{Zakrevskiy} formally derived the compressible Euler and Navier-Stokes limits and incompressible Navier-Stokes limits. We also mention that, Filbet, Hu and Jin \cite{ref23} introduced a new scheme for quantum Boltzmann equation to capture the Euler limit by numerical computations.

Starting from the solutions to the limiting fluid equations, Caflisch \cite{ref14} and Nishida \cite{Nishida-1978} proved the compressible Euler limit from the Boltzmann equation in the context of classical solution by the Hilbert expansion, and analytic solutions, respectively. Caflisch's approach was applied to the acoustic limit by Guo, Jang and Jiang \cite{ref30, ref31, ref33} by combining with nonlinear energy method. We also mention some more results using Hilbert expansions \cite{Guo-CPAM2006, Jiang-Xiong-2015}.

The main concern of the current paper is to justify rigorously the formal derivation of Zakrevskiy \cite{Zakrevskiy}. The present paper verifies the incompressible Navier-Stokes limits rigorously in the context of classical solution. Precisely, we first prove the uniform in $\epsilon$ global existence of classical solution around the equilibrium to the scaled Boltzmann-Fermi-Dirac equation for the hard sphere collision. More importantly, we obtain the uniform energy estimate, using which we rigorously prove the limit from Boltzmann-Fermi-Dirac equation to Incompressible Navier-Stokes-Fourier equations by taking limit as $\epsilon\to 0$.

As indicated in \cite{Zakrevskiy}, the equilibrium of the Boltzmann-Fermi-Dirac equation \eqref{1eq1} is the global Fermi-Dirac distribution \eqref{global-equi}.  Thus we choose $\mu(1-\mu)$ as the weight, which leads to the gain of new nonlinear terms in \eqref{1eq8} in comparing with the case of perturbed Boltzmann equation, and brings us some new difficulties.

{\bf Notation.} We introduce some notations for the presentation throughout this paper.
We write $g\in L^2(\mu(1-\mu)\d v)\equiv L_v^2$ if $ \int_{\mathbb{R}^3}|g|^2\mu(1-\mu)\d v<+\infty$, and use $\langle \cdot,\cdot \rangle$ to denote the inner product in the Hilbert space $L^2(\mu(1-\mu)\d v)$, $|\cdot|_{L_v^2}$ the corresponding $L^2$ norm, and sometimes use $\langle g \rangle$ to denote  $\int_{\mathbb{R}^3}g\mu(1-\mu)\d v.$

Similarly, we write $g(x,v)\in H^N\left(\d x;L^2(\mu(1-\mu)\d v)\right)\equiv H_x^NL_v^2$ for integer $N\geqslant 0$ if
\begin{equation*}
  \sum_{|\alpha|\leqslant N}\iint_{\mathbb{R}^3\times\mathbb{R}^3}|\partial_x^\alpha g|^2\mu(1-\mu)\d v\d x<+\infty,
\end{equation*}
for multi-index $\alpha=(\alpha_1,\alpha_2,\alpha_3)$ and $|\alpha|=\sum\limits_{i=1}^{3}\alpha_i$,
and use $(\cdot,\,\cdot)_{H_x^NL_v^2}$ and $(\cdot,\,\cdot)_{H_x^N}$ to denote the inner product in the Hilbert space $H^N\left(\d x;L^2(\mu(1-\mu)\d v)\right)$ and $L^2(\mu(1-\mu)\d v)$ respectively, $\|\cdot\|_{H_x^NL_v^2}$ and $\|\cdot\| _{H_x^N}$ the corresponding norm, sometimes for $N=0$. We drop the subscript $H_x^NL_v^2$ and $H_x^N$ in the inner product and norm. It is also convenient to introduce a weighted inner product as
\begin{equation*}
  \left\langle f, g \right\rangle_{\nu}=\left\langle \nu f, g \right\rangle
\end{equation*}
for any functions $f(v)$ and $g(v)$ in $L^2(\mu(1-\mu)\d v)$, and use $|\cdot|_{\nu}$ for the corresponding weighted $L^2$ norm.

Finally, throughout this paper, let $N\geqslant 4$ be an integer. We define the operator $\mathbf{P}$ as $v$-projection in $L^2(\mu(1-\mu)\d v)$ to the null space $Null(L)$ of $L$. More specifically, by \eqref{2eq2}, for any $g(t,x,v)\in L^2(\mu(1-\mu)\d v)$, there exists constants (in $v$) $a(t,x)\in \mathbb{R}$, $b(t,x)\in \mathbb{R}^3$ and $c(t,x)\in \mathbb{R}$ such that
\begin{equation*}
  \mathbf{P} g=a(t,x)+b(t,x)\cdot v+ c(t,x)|v|^2,
\end{equation*}
then we have the following macro-micro decomposition
\begin{equation}\label{1eq15}
  g=\mathbf{P} g+\{\mathbf{I}-\mathbf{P}\}g,
\end{equation}
$\mathbf{P} g$ is called the macroscopic part and $\{\mathbf{I}-\mathbf{P}\}g$ is called the microscopic part. It's clear that
\begin{equation}\label{1eq16}
  \frac{1}{C}\left\|\partial_x^\alpha \mathbf{P} g\right\|_{\nu}\leqslant \left\|\partial_x^\alpha(a,b,c)\right\| \leqslant C\left\|\partial_x^\alpha \mathbf{P} g\right\|,
\end{equation}
for some constant $C>1$.

Then we can define the temporal energy functional as
\begin{equation*}
  \mathcal{E}_N^2(g)\equiv\mathcal{E}_N^2(g(t,x,v))=\sum\limits_{|\alpha|\leqslant N}\left\|\partial_x^{\alpha}g(t,\cdot,\cdot)\right\|^2,
\end{equation*}
and the microscopic dissipation rate as
\begin{equation*}
  \left(\mathcal{D}_N^{mic}\right)^2(g)(t)\equiv\left(\mathcal{D}_N^{mic}\right)^2(g(t,x,v)) =\sum\limits_{|\alpha|\leqslant N}\left\|\partial_x^{\alpha}\{\mathbf{I}-\mathbf{P}\}g(t,\cdot,\cdot)\right\|_{\nu}^2,
\end{equation*}
the macroscopic dissipation rate as
\begin{equation*}
  \left(\mathcal{D}_N^{mac}\right)^2(g)(t)\equiv\left(\mathcal{D}_N^{mac}\right)^2(g(t,x,v))=\sum\limits_{0<|\alpha|\leqslant N}\left\|\partial_x^{\alpha}\mathbf{P}g(t,\cdot,\cdot)\right\|_{\nu}^2.
\end{equation*}
Clearly, \eqref{1eq16} implies
\begin{equation}\label{1eq17}
  \mathcal{D}_N^{mac}\leqslant C \mathcal{E}_N.
\end{equation}

\subsection{Main results.} Our main results are as follows: the first theorem is about the global existence of the Boltzmann-Fermi-Dirac equation \eqref{1eq8} uniform with respect to the Knudsen number $\epsilon$.
\begin{theorem}\label{th1}
  Let $0<\epsilon<1$. Assume $0\leqslant F_{\epsilon,0}(x,v)=\mu+\epsilon\mu(1-\mu) g_{\epsilon,0}\leqslant 1$, then there are constants $\delta_0>0$, $c_0>0$, independent of $\epsilon$, such that if $\mathcal{E}_N(g_{\epsilon,0})\leqslant \delta_0$, then there exists a unique global solution \begin{equation*}
    g_{\epsilon}\in L^{\infty}\left([0,+\infty);\,H^N(\d x;\,L^2(\mu(1-\mu)\d v))\right)
  \end{equation*}
  to the Cauchy problem \eqref{1eq8}. Furthermore, $0\leqslant F_{\epsilon}=\mu+\epsilon\mu(1-\mu)g_{\epsilon}\leqslant 1$, and we have the following global energy estimate
  \begin{equation}\label{1eq18}
    \sup_{t\geqslant 0}\mathcal{E}_N^2(g_\epsilon)(t)+c_0 \int_{0}^{\infty} \frac{1}{\epsilon^2}\left(\mathcal{D}_N^{mic}\right)^2(g_\epsilon)(t)dt+c_0 \int_{0}^{\infty} \left(\mathcal{D}_N^{mac}\right)^2(g_\epsilon)(t)dt \leqslant \mathcal{E}_N^2(g_{\epsilon,0}).
  \end{equation}
\end{theorem}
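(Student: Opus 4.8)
\emph{Strategy.} The natural route is the nonlinear energy method: first build a local-in-time solution by a standard iteration scheme, then promote it to a global one by a continuation argument whose engine is the uniform-in-$\epsilon$ a priori estimate, which is precisely \eqref{1eq18}. So the whole content is the a priori estimate, carried out under the bootstrap hypothesis $\sup_{[0,T]}\mathcal{E}_N(g_\epsilon)\leqslant\delta_0\ll1$. The features that distinguish this from the perturbed Boltzmann case, and that must be accommodated, are the extra bilinear pieces $Q_3,\dots,Q_6$ in \eqref{1eq13}, the cubic term $T(g_\epsilon,g_\epsilon,g_\epsilon)$ in \eqref{1eq14}, and the two-sided bound $0\leqslant F_\epsilon\leqslant 1$ intrinsic to Fermi-Dirac statistics.

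\emph{Microscopic and macroscopic dissipation.} For each $|\alpha|\leqslant N$ apply $\partial_x^\alpha$ to \eqref{1eq8}, take the $L^2(\mu(1-\mu)\,\d v)$ inner product with $\partial_x^\alpha g_\epsilon$, integrate in $x$ and sum over $\alpha$. The streaming term $\tfrac1\epsilon v\cdot\nabla_x\partial_x^\alpha g_\epsilon$ is skew-adjoint and drops out. For the linear term I invoke the spectral structure of $L$: $L\geqslant0$, $Null(L)=\mathrm{span}\{1,v,|v|^2\}$ (cf. \eqref{2eq2}), and the coercivity $\langle Lg,g\rangle\geqslant c_1|\{\mathbf{I}-\mathbf{P}\}g|_\nu^2$, which --- just as classically --- follows from \eqref{1eq9}--\eqref{1eq12} together with a Grad-type analysis showing $K=K_2-K_1$ is controlled relative to the collision frequency $\nu$. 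This gives
\[
\tfrac12\tfrac{\d}{\d t}\mathcal{E}_N^2(g_\epsilon)+\tfrac{c_1}{\epsilon^2}(\mathcal{D}_N^{mic})^2(g_\epsilon)\leqslant(\text{nonlinear terms}).
\]
The linear term sees only $\{\mathbf{I}-\mathbf{P}\}g_\epsilon$, so $\mathcal{D}_N^{mac}$ must be produced separately. Insert \eqref{1eq15} into \eqref{1eq8} and project onto $Null(L)$: this yields the local conservation laws, a first-order system for $(a,b,c)(t,x)$ whose sources are $\tfrac1\epsilon$ times velocity moments of $\{\mathbf{I}-\mathbf{P}\}g_\epsilon$ plus nonlinear moments. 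Following the by now standard recipe (Guo's energy method, adapted to the $\epsilon$-scaling) one constructs an interaction functional $\mathcal{G}_N(t)$, bilinear in $\partial_x^\alpha\{\mathbf{I}-\mathbf{P}\}g_\epsilon$ and $\partial_x^\alpha(a,b,c)$ with suitable powers of $\epsilon$, satisfying $|\mathcal{G}_N(t)|\lesssim\mathcal{E}_N^2(g_\epsilon)$ and
\[
\tfrac{\d}{\d t}\mathcal{G}_N+c_2(\mathcal{D}_N^{mac})^2\leqslant\tfrac{C}{\epsilon^2}(\mathcal{D}_N^{mic})^2+(\text{nonlinear terms}).
\]
The working energy is then $\mathcal{E}_N^2(g_\epsilon)+\kappa\epsilon^2\mathcal{G}_N$ with $\kappa$ small: it is equivalent to $\mathcal{E}_N^2(g_\epsilon)$, and its time derivative dominates $\tfrac{c}{\epsilon^2}(\mathcal{D}_N^{mic})^2+c(\mathcal{D}_N^{mac})^2$ modulo nonlinear terms.

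\emph{Nonlinear terms.} This is where the new difficulties concentrate. Two structural facts do the work. First, $Q$ and $T$ are valued in $Null(L)^\perp$: $\langle Q(f,g),\phi\rangle=\langle T(f,g,h),\phi\rangle=0$ for $\phi\in Null(L)$, obtained by expanding the conservation laws \eqref{1eq5} order by order in $\epsilon$. Consequently, in the energy identity one may replace $\partial_x^\alpha g_\epsilon$ by $\{\mathbf{I}-\mathbf{P}\}\partial_x^\alpha g_\epsilon$ in the pairing with $\tfrac1\epsilon\partial_x^\alpha Q(g_\epsilon,g_\epsilon)$; the spare $\{\mathbf{I}-\mathbf{P}\}$ factor is what supplies a copy of $\mathcal{D}_N^{mic}$ to be paid against the $\tfrac1\epsilon$ via the $\tfrac1{\epsilon^2}$-weighted dissipation. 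Second, I need the new trilinear collision estimates announced in the abstract: weighted $L^2_v$ bounds controlling $Q(f,g)$ and $T(f,g,h)$ in the $\nu$-dual norm by products of the $L^2_v$- and $\nu$-norms of their arguments, proved by bounding the kernels $|v_1-v|\,|\cos\theta|\,\mathcal{N}/[\mu(1-\mu)]$ --- here the Maxwellian-type decay of $\mathcal{N}$ tames the factors $1-\mu_1-\mu$, $\mu_1'\mu'$, etc.\ appearing in \eqref{1eq13}--\eqref{1eq14} --- together with the Grad change of variables to handle the post-collisional arguments $g',g_1'$ that also occur in $K_2$. Combining these with the Sobolev embedding $H^N_x\hookrightarrow L^\infty_x$ ($N\geqslant4$), Gagliardo-Nirenberg interpolation to extract spatial derivatives for the purely macroscopic interactions, and a macro-micro split of every slot of $Q$ and $T$ (so that a differentiated factor always lands inside $\mathcal{D}_N^{mic}$ or $\mathcal{D}_N^{mac}$), one bounds all the nonlinear contributions by $\big(\mathcal{E}_N(g_\epsilon)+\mathcal{E}_N^2(g_\epsilon)\big)\cdot\big(\tfrac1{\epsilon^2}(\mathcal{D}_N^{mic})^2+(\mathcal{D}_N^{mac})^2\big)$, plus harmless lower-order remainders. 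For $\mathcal{E}_N(g_\epsilon)<\delta_0$ small, independent of $\epsilon$, the right-hand sides above are absorbed, and integrating in time yields \eqref{1eq18}; a standard continuity argument then makes $\delta_0$-smallness self-consistent and the solution global.

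\emph{Positivity and the main obstacle.} It remains to propagate $0\leqslant F_\epsilon\leqslant1$. This is arranged already at the level of the local iteration: one writes the iterates as $\mu+\epsilon\mu(1-\mu)g^{(n)}$ and uses the sign of the collision operator on the boundary of $[0,1]$ --- with $\delta=1$, the gain term of $C(F)$ vanishes where $F=1$ and the loss term where $F=0$, so $[0,1]$ is invariant for the approximating equations --- and the uniform estimate then transfers the constraint to the global solution and to the limit. The genuine obstacle throughout is the nonlinear analysis of the previous paragraph: establishing the trilinear kernel bounds for $T$ and the new bilinear pieces $Q_3,\dots,Q_6$ with $\epsilon$-independent constants, and, above all, the careful bookkeeping of the powers of $\epsilon$ --- in particular, controlling the purely macroscopic quadratic interaction $Q(\mathbf{P}g_\epsilon,\mathbf{P}g_\epsilon)$ by exploiting the fluid-type cancellation structure of $Q$ on $Null(L)$ --- so that the singular prefactor $\tfrac1\epsilon$ in front of $Q$ in \eqref{1eq8} is, in every estimate, compensated by the $\tfrac1{\epsilon^2}$-weighted microscopic dissipation. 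The linear theory and the continuation argument are then routine.
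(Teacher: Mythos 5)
Your proposal follows essentially the same route as the paper: local existence by iteration, the Guo-type micro--macro energy method with coercivity of $L$ supplying $\mathcal{D}_N^{mic}$, an interaction functional built from the macroscopic equations supplying $\mathcal{D}_N^{mac}$ (the paper uses $d\,\mathcal{E}_N^2+\epsilon\sum_\alpha G_\alpha$ rather than your $\mathcal{E}_N^2+\kappa\epsilon^2\mathcal{G}_N$, but this is only a normalization of where the powers of $\epsilon$ sit), trilinear/bilinear collision estimates via the Grad change of variables and Gagliardo--Nirenberg interpolation for the purely macroscopic term $Q(\mathbf{P}g,\mathbf{P}g)$, and a continuation argument. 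All the ingredients you identify, including the orthogonality of $Q$ and $T$ to $Null(L)$ and the $\epsilon$-bookkeeping against the $\epsilon^{-2}$-weighted dissipation, are exactly those the paper uses.
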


The second theorem is about the limit to the  incompressible Navier-Stokes-Fourier equations:
\begin{equation}\label{1eq19}
\left\{
  \begin{array}{l}
    E_2\partial_{t} \mathrm{u}+E_2\mathrm{u} \cdot \nabla_{x} \mathrm{u}+\nabla_{x} p=\nu_{*} \Delta_{x} \mathrm{u}, \\
    \nabla_{x} \cdot \mathrm{u}=0, \\
    C_A\partial_{t} \theta+C_A\mathrm{u} \cdot \nabla_{x} \theta=\kappa_{*} \Delta_{x} \theta,
  \end{array}
\right.
\end{equation}
where
\begin{equation*}
  \nu_{*}=\left\langle\beta_{L}\left(|v|\right) v_{1}^{2} v_{2}^{2}\right\rangle, \qquad
  \kappa_{*}=\left\langle\alpha_{L}\left(|v|\right)\left(\frac{|v|^{2}}{2}-K_{A}\right)^{2} v_{1}^{2}\right\rangle.
\end{equation*}
One can find the definition of constants $E_2$, $C_A$, $K_A$, $K_g$, positive functions $\alpha_{L}\left(|v|\right)$ and $\beta_{L}\left(|v|\right)$ appeared here in section \ref{sec4}.
\begin{theorem}\label{th2}
   Let $0<\epsilon<1$, and $\delta_0>0$ be as in Theorem \ref{th1}. For any $\left(\rho_{0}, \mathrm{u}_{0}, \theta_{0}\right)\in H^{N}\left(\d x\right)$ with $\left\|\left(\rho_{0}, \mathrm{u}_{0}, \theta_{0}\right)\right\|_{H_x^N} < \frac{\delta_0}{2}$, and $g_{\epsilon, 0} \in Null(L)^{\bot}$ with $\left\|\widetilde{g}_{\epsilon, 0}\right\|_{H_x^NL_v^2}<\frac{\delta_0}{2}$, let
   \begin{equation*}
     g_{\epsilon, 0}(x, v)=\left\{\rho_{0}(x)+\mathrm{u}_{0}(x) \cdot v+\theta_{0}(x)\left(\frac{|v|^{2}}{2}-K_g\right)\right\} +\widetilde{g}_{\epsilon, 0}(x, v).
   \end{equation*}
   Let $g_{\epsilon}$ be the family of solutions to the Boltzmann-Fermi-Dirac equation \eqref{1eq8}  constructed in Theorem \ref{th1}. Then,
   \begin{equation*}
     g_{\epsilon} \rightarrow \mathrm{u} \cdot v+\theta\left(\frac{|v|^{2}}{2}-K_A\right), \quad \text { as }\epsilon \rightarrow 0,
   \end{equation*}
   where the convergence is weak-$\star$ for $t$, strongly in $H^{N-\eta}\left(\d x\right)$ for any $\eta>0,$ and weakly in $L^{2}(\mu(1-\mu)\d v),$ and $(\mathrm{u}, \theta) \in C\left([0, \infty) ; H^{N-1}\left(\d x\right)\right) \cap L^{\infty}\left([0, \infty) ; H^{N}\left(\d x\right)\right)$ is the solution of the incompressible Navier-Stokes-Fourier equation \eqref{1eq19} with initial data:
   \begin{equation*}
        \mathrm{u}\big|_{t=0}(x)=\mathcal{P} \mathrm{u}_{0}(x), \quad \theta \big|_{t=0}=\frac{K_g\theta_0(x)-\rho_0(x)}{K_g+1},
  \end{equation*}
  where $\mathcal{P}$ is the Leray projection. Furthermore, the convergence of the moments holds: as $\epsilon \rightarrow 0$
  \begin{equation*}
    \begin{aligned}
        \mathcal{P}\left\langle g_{\epsilon}, v \right\rangle &\rightarrow \mathrm{u}, &\quad \mathrm{ in }\; C\left([0, \infty) ; H^{N-1-\eta}\left(\d x\right)\right) \cap L^{\infty}\left([0, \infty) ; H^{N-\eta}\left(\d x\right)\right), \\
        \left\langle g_{\epsilon},\frac{1}{C_A}\left(\frac{|v|^{2}}{2}-K_A\right) \right\rangle &\rightarrow \theta, &\quad \mathrm{ in }\; C\left([0, \infty) ; H^{N-1-\eta}\left(\d x\right)\right) \cap L^{\infty}\left([0, \infty) ; H^{N-\eta}\left(\d x\right)\right)
    \end{aligned}
  \end{equation*}
for any $\eta>0$.
\end{theorem}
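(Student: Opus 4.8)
\medskip
\noindent\textbf{Proof strategy for Theorem \ref{th2}.}
The plan is to run a moment (Bardos--Golse--Levermore type) argument on the family $\{g_\epsilon\}_{0<\epsilon<1}$ furnished by Theorem \ref{th1}, using the uniform bound \eqref{1eq18} as the sole quantitative input. By \eqref{1eq18} the family is bounded in $L^\infty([0,\infty);H_x^NL_v^2)$, so along a subsequence $g_\epsilon\rightharpoonup g$ weakly-$\star$ in $t$ and weakly in $L_v^2$, while $\tfrac1\epsilon\{\mathbf I-\mathbf P\}g_\epsilon$ is bounded in $L^2([0,\infty);H_x^NL_v^2)$ and thus has a weak limit; the weight $\epsilon^{-2}$ in front of $(\mathcal D_N^{mic})^2$ forces $\{\mathbf I-\mathbf P\}g_\epsilon\to0$, hence $g=\mathbf Pg\in Null(L)$, i.e.
\[
g(t,x,v)=\rho(t,x)+\mathrm u(t,x)\cdot v+\theta(t,x)\Big(\tfrac{|v|^2}{2}-K_A\Big),
\]
with $K_A$ the constant making $\tfrac{|v|^2}{2}-K_A$ orthogonal to $1$ in $L^2(\mu(1-\mu)\d v)$ and $C_A=\big\langle(\tfrac{|v|^2}{2}-K_A)^2\big\rangle$. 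To legitimize the passage to the limit in the nonlinear terms I will also need strong $x$-compactness: testing \eqref{1eq8} against fixed smooth $v$-weights bounds $\partial_t$ of the hydrodynamic moments uniformly in $H_x^{N-1}$, so Aubin--Lions--Simon gives $\mathbf Pg_\epsilon\to\mathbf Pg$ in $C_{loc}([0,\infty);H_x^{N-1-\eta})$ for every $\eta>0$, upgraded to $L^\infty([0,\infty);H_x^{N-\eta})$ by interpolation with the uniform $H_x^N$ bound.

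Next I would extract the incompressibility and Boussinesq constraints. Since $L$, $Q$ and $T$ all annihilate the collision invariants $1,v,|v|^2$ against $\langle\cdot,\cdot\rangle$ (the weighted form of \eqref{1eq5}), \eqref{1eq8} yields the local conservation laws $\partial_t\langle g_\epsilon,\psi\rangle+\tfrac1\epsilon\nabla_x\cdot\langle v\psi\,g_\epsilon\rangle=0$ for $\psi\in\{1,v,|v|^2\}$. Multiplying by $\epsilon$ and sending $\epsilon\to0$ gives $\nabla_x\cdot\langle vg\rangle=0$ together with the corresponding relations for the second and third moments; inserting the form of $g$ above produces $\nabla_x\cdot\mathrm u=0$ and the Boussinesq relation $\nabla_x(\rho+\theta)=0$, so $\rho+\theta$ depends on $t$ only. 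Combined with the decomposition of the prepared data $g_{\epsilon,0}$, this is exactly what selects the initial data $\mathrm u|_{t=0}=\mathcal P\mathrm u_0$ (the Leray projection removes the gradient/acoustic part of $\mathrm u_0$) and $\theta|_{t=0}=\frac{K_g\theta_0-\rho_0}{K_g+1}$ (projection of $(\rho_0,\theta_0)$ onto the Boussinesq subspace $\{\rho+\theta=0\}$).

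The core step would be the derivation of \eqref{1eq19}. Multiplying \eqref{1eq8} by $\epsilon^2$ and using $LP=0$,
\[
\{\mathbf I-\mathbf P\}g_\epsilon=L^{-1}\Big[\epsilon Q(g_\epsilon,g_\epsilon)+\epsilon^2T(g_\epsilon,g_\epsilon,g_\epsilon)-\epsilon^2\partial_tg_\epsilon-\epsilon\,v\cdot\nabla_xg_\epsilon\Big],
\]
whence $\tfrac1\epsilon\{\mathbf I-\mathbf P\}g_\epsilon\rightharpoonup L^{-1}\big[Q(\mathbf Pg,\mathbf Pg)-v\cdot\nabla_x\mathbf Pg\big]$; in particular the cubic term $T$ carries an extra power of $\epsilon$ and drops out in the limit, consistent with \eqref{1eq19} being quadratic. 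I would then test \eqref{1eq8} with $v$ and with $\tfrac{1}{C_A}(\tfrac{|v|^2}{2}-K_A)$: the $L$, $Q$, $T$ contributions vanish by conservation, and the streaming term splits into $\nabla_x\cdot\langle\widehat A\,,\,\tfrac1\epsilon\{\mathbf I-\mathbf P\}g_\epsilon\rangle$ plus pure gradient and time-derivative pieces, where $\widehat A(v)$ is the traceless part of $v\otimes v$ (resp. the heat-flux vector $(\tfrac{|v|^2}{2}-K_A)v$). Passing to the limit and invoking the Fermi--Dirac analogues of the Bardos--Golse--Levermore identities --- $\langle\widehat A,L^{-1}(v\cdot\nabla_x\mathbf Pg)\rangle$ reproducing $-\nu_*(\nabla\mathrm u+(\nabla\mathrm u)^\top)$ (resp.\ $-\kappa_*\nabla\theta$), and $\langle\widehat A,L^{-1}Q(\mathbf Pg,\mathbf Pg)\rangle$ reproducing $E_2\,\mathrm u\otimes\mathrm u$ modulo pressure (resp.\ $C_A\,\mathrm u\,\theta$) --- yields exactly \eqref{1eq19} with $\nu_*$ and $\kappa_*$ as stated; the strong compactness from the first step legitimizes the limit in the quadratic terms $\mathrm u\otimes\mathrm u$ and $\mathrm u\,\theta$.

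Finally I would observe that the limit $(\mathrm u,\theta)$ belongs to $L^\infty([0,\infty);H_x^N)$ with the claimed continuity, solves \eqref{1eq19} with the small initial data above, and is therefore unique by the standard energy theory for \eqref{1eq19}; hence the full family converges, not merely a subsequence. The stated convergence of moments $\mathcal P\langle g_\epsilon,v\rangle\to\mathrm u$ and $\langle g_\epsilon,\tfrac1{C_A}(\tfrac{|v|^2}{2}-K_A)\rangle\to\theta$ then follows from the strong $H_x^{N-\eta}$ convergence of $g_\epsilon$ together with the fact that, by the choice of $K_A$ and $C_A$, $\langle\mathbf Pg,v\rangle$ and $\langle\mathbf Pg,\tfrac1{C_A}(\tfrac{|v|^2}{2}-K_A)\rangle$ reproduce $\mathrm u$ (after $\mathcal P$) and $\theta$. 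I expect the main obstacle to be the core step: the Bardos--Golse--Levermore computation must be redone for the Fermi--Dirac equilibrium weight $\mu(1-\mu)$, where $Q$ carries the six extra terms $Q_1,\dots,Q_6$ (and $T$ the terms $T_1,\dots,T_8$) coming from the Pauli exclusion factors, so that the collapse of $\langle\widehat A,L^{-1}Q(\mathbf Pg,\mathbf Pg)\rangle$ to the incompressible convection term is no longer the classical one (this is exactly why an $E_2$ appears in \eqref{1eq19}); a secondary difficulty, on the whole space $\mathbb R^3$, is controlling the fast acoustic modes generated by $\rho_0$ and $(\mathbf I-\mathcal P)\mathrm u_0$ so that they disperse and do not pollute the weak limits, which is handled by the smallness of the data together with the $L^2_t$ dissipation in \eqref{1eq18}.
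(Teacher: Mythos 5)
Your overall route is the same as the paper's: extract weak limits from the uniform energy bound \eqref{1eq18}, force the limit into $Null(L)$ via the $\epsilon^{-2}$-weighted dissipation, obtain incompressibility and the Boussinesq relation from the $O(1/\epsilon)$ terms of the local conservation laws, express the fluxes through $A=LA'$, $B=LB'$ (equivalently your $L^{-1}$ formulation, after projecting the right-hand side onto $Null(L)^{\bot}$), pass to the limit in a weak formulation, and identify the initial data through the Leray and Boussinesq projections.

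There is, however, one step that would fail as written: the claim that testing the equation against fixed $v$-weights bounds $\partial_t$ of \emph{all} hydrodynamic moments uniformly in $H_x^{N-1}$, so that Aubin--Lions gives $\mathbf{P}g_\epsilon\to\mathbf{P}g$ in $C_{loc}([0,\infty);H_x^{N-1-\eta})$. The local conservation laws \eqref{4eq7} contain the singular terms $\frac{1}{\epsilon}\nabla_x\cdot\mathrm{u}_\epsilon$ and $\frac{1}{\epsilon}\nabla_x(\rho_\epsilon+\theta_\epsilon)$, and the energy estimate \eqref{1eq18} controls $\nabla_x\cdot\mathrm{u}_\epsilon$ and $\nabla_x(\rho_\epsilon+\theta_\epsilon)$ only in $L^2_t$ through $\mathcal{D}_N^{mac}$, \emph{without} a compensating factor of $\epsilon$; hence $\partial_t\rho_\epsilon$, $\partial_t\theta_\epsilon$ and $\partial_t\mathcal{Q}\mathrm{u}_\epsilon$ are of size $O(1/\epsilon)$ and the full macroscopic part is not equicontinuous in time. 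This is exactly the fast acoustic oscillation you relegate to a ``secondary difficulty'' at the end, but it already invalidates your compactness step. The paper therefore establishes time-compactness only for the acoustic-free combinations $\mathcal{P}\mathrm{u}_\epsilon$ and $\widetilde{\theta}_\epsilon=(K_g\theta_\epsilon-\rho_\epsilon)/(K_g+1)$, whose evolution equations have right-hand sides controlled by $\epsilon^{-1}\mathcal{D}_N^{mic}\in L^2_t$, and accordingly Theorem \ref{th2} asserts $C_t$-convergence only for these two moments while $g_\epsilon$ itself converges merely weak-$\star$ in $t$. Since you then invoke the unavailable strong compactness of the full $\mathbf{P}g_\epsilon$ to pass to the limit in $\mathrm{u}_\epsilon\otimes\mathrm{u}_\epsilon$ and $\mathrm{u}_\epsilon\theta_\epsilon$, you are missing the paper's additional decomposition: write $\mathrm{u}_\epsilon=\mathcal{P}\mathrm{u}_\epsilon+\mathcal{Q}\mathrm{u}_\epsilon$ and $\theta_\epsilon=\widetilde{\theta}_\epsilon+\frac{1}{K_g+1}(\rho_\epsilon+\theta_\epsilon)$, keep only the products of compact factors in the principal terms, and push every cross term involving $\mathcal{Q}\mathrm{u}_\epsilon$ or $\rho_\epsilon+\theta_\epsilon$ into a remainder that vanishes in the sense of distributions by \eqref{4eq8} and \eqref{4eq10} (after the Leray projection, respectively inside a divergence). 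Without this, the identification of the limiting convection terms is not justified.
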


 The paper is organized as follows: we give some basic estimates for the operators $L$, $Q$ and $T$ in the next section; Then prove the global existence of solution to scaled Boltzmann-Fermi-Dirac equation \eqref{1eq7} in section \ref{sec3}; In the last section, we prove the incompressible Navier-Stokes-Fourier limit.

\newpage
\section{Some basic estimates}\label{sec2}
This section is a preparation for energy estimates including microscopic and macroscopic estimates in next section. The operators $L$, $Q$ and $T$ defined by \eqref{1eq9}, \eqref{1eq13} and \eqref{1eq14} are studied respectively.

\subsection{Properties of the linear operator $L$}
The most crucial point in energy estimate is that $L$ is locally coercive, namely, there holds
\begin{equation*}
  \left\langle Lg,g\right\rangle \geqslant \lambda|\{\mathbf{I}-\mathbf{P}\}g|_{\nu}^2,\quad\forall g\in D(L),
\end{equation*}
for some constant $\lambda>0$, where
\begin{equation*}
  D(L)=\left\{g\in L^2(\mu(1-\mu)\d v)\,\big|\,\nu g\in L^2(\mu(1-\mu)\d v)\right\}
\end{equation*}
is the domain of $L$. For convenience, we list some results without proof since they are parallel to the results in perturbed Boltzmann equation \cite{ref24}.

\begin{proposition}\label{prop1}
For the operator $L=\nu-K$, we have the following results:
\begin{itemize}
  \item There are two constants $C_1,\;C_2>0$ such that
  \begin{equation}\label{2eq1}
    C_1(1+|v|)\leqslant \nu(v) \leqslant C_2(1+|v|).
  \end{equation}
  \item The operator
  \begin{equation*}
    K:\,L^2\left(\mu(1-\mu)\d v\right)\longrightarrow L^2\left(\mu(1-\mu)\d v\right)
  \end{equation*}
  is compact.
  \item The linearized collision operator $L$ is symmetric, nonnegative, and its null space is
  \begin{equation}\label{2eq2}
    Null(L)=\operatorname{span}\left\{1,\,v,\,|v|^2\right\}.
  \end{equation}
  \item There exists a constant $\lambda>0$ such that
  \begin{equation}\label{2eq3}
    \langle Lg\;, g \rangle \geqslant \lambda \left|\{\mathbf{I}-\mathbf{P}\}g\right|_{\nu}^2.
  \end{equation}
\end{itemize}
\end{proposition}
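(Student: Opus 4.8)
\medskip
\noindent\textbf{Proof plan.}
All four assertions are the Fermi--Dirac counterparts of Grad's classical analysis of the linearized hard-sphere Boltzmann operator, and the plan is to follow that analysis, the one genuinely new ingredient being uniform control of the weight $\mathcal{N}/(\mu(1-\mu))$ that appears in \eqref{1eq10}--\eqref{1eq12}. Writing $E(v)=\tfrac{|v|^2}{2}-1$, so that $\mu=(1+e^{E})^{-1}$ and $\mu(1-\mu)=e^{E}(1+e^{E})^{-2}$, I would first note that energy conservation gives $E+E_1=E'+E_1'$, and combine this with the detailed-balance identity $\mathcal{N}=\mu_1^\prime\mu^\prime(1-\mu_1)(1-\mu)=\mu_1\mu(1-\mu_1^\prime)(1-\mu^\prime)$ recorded just before \eqref{1eq9} to obtain the closed form
\begin{equation*}
  \frac{\mathcal{N}}{\mu(1-\mu)}=\frac{(1+e^{E})\,e^{E_1}}{(1+e^{E_1})(1+e^{E'})(1+e^{E_1'})}.
\end{equation*}
Since $e^{E_1}\geqslant e^{-1}$ and $(1+e^{E'})(1+e^{E_1'})\geqslant 1+e^{E'+E_1'}=1+e^{E}e^{E_1}$, the right-hand side is bounded above by an absolute constant, and in fact by $C e^{-c|v_1|^2}$; conversely, the elementary bound $(1+e^{E'})(1+e^{E_1'})\leqslant C e^{E}e^{E_1}$---which follows from $E',E_1'\geqslant -1$ together with $E'+E_1'=E+E_1$---shows that $\mathcal{N}/(\mu(1-\mu))$ is bounded \emph{below} by a strictly positive constant on $\{|v_1|\lesssim 1\}$.

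With these two-sided bounds, \eqref{2eq1} reduces to the classical hard-sphere computation. For the upper bound,
\begin{equation*}
  \nu(v)=\iint_{\mathbb{R}^3\times\mathbb{S}^2}|v_1-v|\,|\cos\theta|\,\frac{\mathcal{N}}{\mu(1-\mu)}\,\d\omega\,\d v_1\leqslant C\int_{\mathbb{R}^3}|v_1-v|\,e^{-c|v_1|^2}\,\d v_1\leqslant C_2(1+|v|);
\end{equation*}
for the lower bound one restricts the integral to $\{|v_1|\lesssim 1\}$, on which the integrand is $\gtrsim |v_1-v|\,|\cos\theta|$ and $|v_1-v|\gtrsim 1+|v|$ for $|v|$ large, while on compact sets one uses that $\nu$ is continuous (dominated convergence) and strictly positive. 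For the compactness of $K=K_2-K_1$, I would write $K_1,K_2$ as integral operators $\int_{\mathbb{R}^3}k_i(v,v_*)g(v_*)\,\d v_*$---immediately for $K_1$, and after the Carleman-type change of variables for the $g',g_1'$ contributions in $K_2$---and observe that, since the remaining Fermi--Dirac factors $\mu',\mu_1',1-\mu,1-\mu_1$ lie in $[0,1]$ and $\mu(1-\mu)$ decays at the Gaussian rate $e^{-|v|^2/2}$, the kernels obey the same Grad bound $|k_i(v,v_*)|\leqslant C|v-v_*|^{-1}e^{-c|v-v_*|^2-c(|v|^2-|v_*|^2)^2/|v-v_*|^2}$ (after the isometry $L^2(\mu(1-\mu)\d v)\cong L^2(\d v)$) as in the classical case. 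Truncating the kernel to $\{|v|,|v_*|\leqslant R,\ |v-v_*|\geqslant 1/R\}$ then produces a Hilbert--Schmidt, hence compact, operator $K^{R}$ with $\|K-K^{R}\|\to 0$ as $R\to\infty$, so $K$ is compact.

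For the symmetry, nonnegativity and null space, the starting point is the weak formulation
\begin{equation*}
  \langle Lg,\,h\rangle=\tfrac14\iiint_{\mathbb{R}^3\times\mathbb{R}^3\times\mathbb{S}^2}|v_1-v|\,|\cos\theta|\,\mathcal{N}\,\big(g+g_1-g'-g_1'\big)\big(h+h_1-h'-h_1'\big)\,\d\omega\,\d v_1\,\d v,
\end{equation*}
which I would derive by first computing $Lg=\mu(1-\mu)^{-1}\iint|v_1-v|\,|\cos\theta|\,\mathcal{N}\,(g+g_1-g'-g_1')\,\d\omega\,\d v_1$ from the linearization of $C(F)$ at $F=\mu$ (the two forms of $\mathcal{N}$ collapse the eight linear terms to this single one), and then symmetrizing via the swap $v\leftrightarrow v_1$ and the pre/post-collisional change $(v,v_1)\leftrightarrow(v',v_1')$, both of which preserve $|v_1-v|\,|\cos\theta|$ and $\mathcal{N}$. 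Symmetry and $\langle Lg,g\rangle\geqslant 0$ are then immediate; $\langle Lg,g\rangle=0$ forces $g+g_1=g'+g_1'$ a.e.\ (as $|v_1-v|\,|\cos\theta|\,\mathcal{N}>0$ a.e.), and the classical characterization of collision invariants gives \eqref{2eq2}; that $L=\nu-K$ is (self-adjoint and) closed on $D(L)$ follows from \eqref{2eq1} and the boundedness of $K$.

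Finally, for the coercivity \eqref{2eq3}, I would argue by contradiction: were no such $\lambda$ to exist, there would be $h_n\in Null(L)^{\bot}$ with $|h_n|_\nu=1$ and $\langle Lh_n,h_n\rangle=|h_n|_\nu^2-\langle Kh_n,h_n\rangle\to 0$; passing to a subsequence weakly convergent in the $\nu$-weighted space, $h_n\rightharpoonup h$, the compactness of $K$ forces $\langle Kh,h\rangle=1$, while weak lower-semicontinuity of $|\cdot|_\nu$ and $L\geqslant 0$ force $\langle Lh,h\rangle=0$, i.e.\ $h\in Null(L)\cap Null(L)^{\bot}=\{0\}$, contradicting $\langle Kh,h\rangle=1$. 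I expect the genuinely new difficulty to lie in the first paragraph---establishing the clean two-sided control of $\mathcal{N}/(\mu(1-\mu))$, especially the lower bound uniformly in $v$, in spite of the Fermi--Dirac corrections---after which the estimates on $\nu$, the Grad kernel bounds, and the spectral-gap argument all go through as in the classical case.
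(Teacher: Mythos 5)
Your proposal is correct, and it fills in a proof that the paper deliberately omits: the authors state Proposition \ref{prop1} without proof, remarking only that the results are ``parallel'' to the classical linearized Boltzmann operator, that \eqref{2eq1} follows because $\nu$ is $\nu_M$ with $M$ replaced by $\mathcal{N}/(\mu(1-\mu))$, that compactness of $K$ follows from Pitt's compactness criterion \cite{ref44}, and that \eqref{2eq3} is a corollary of the first three items, referring to \cite{ref24, ref27, Zakrevskiy} for details. Your reduction is exactly the one the paper has in mind: the two-sided bounds $c\,M_1\leqslant \mathcal{N}/(\mu(1-\mu))\leqslant C\,M_1$ (your closed-form expression and the inequalities $E\geqslant -1$, $E'+E_1'=E+E_1$ check out; in fact the lower bound holds for all $v_1$ up to the factor $e^{-|v_1|^2/2}$, not just on $\{|v_1|\lesssim 1\}$) are precisely the quantitative version of the paper's repeated assertion that ``$M$ and $\mu(1-\mu)$ are bounded by each other,'' and they reduce \eqref{2eq1}, the kernel estimates, and the collision-invariant characterization of $Null(L)$ to the classical hard-sphere computations. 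Your symmetrized quadratic form for $\langle Lg,h\rangle$ is consistent with the definition $Lg=\iint |v_1-v|\,|\cos\theta|\,\tfrac{\mathcal{N}}{\mu(1-\mu)}(g+g_1-g'-g_1')\,\d\omega\,\d v_1$ implicit in \eqref{1eq9}--\eqref{1eq12}, and the Weyl-type contradiction argument for the spectral gap is the standard one. The only genuine divergence from the paper is the compactness step, where you use Grad's pointwise kernel bounds and Hilbert--Schmidt truncation while the paper invokes Pitt's abstract criterion; both are standard and correct, yours being more self-contained and quantitative, the paper's being shorter. One cosmetic caveat: the Grad bound you quote is the one for the gain part $K_2$ after the Carleman change of variables; the loss part $K_1$ has the simpler kernel $\sim|v-v_*|e^{-c(|v|^2+|v_*|^2)}$, which is Hilbert--Schmidt outright, so the conclusion is unaffected.
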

We denote the global Maxwellian by
  \begin{equation*}
    M(v)=e^{-\frac{|v|^2}{2}}.
  \end{equation*}
and the collision frequency in perturbed Boltzmann equation by
\begin{equation*}
  \nu_M=\iint\limits_{\mathbb{R}^3\times\mathbb{S}^2}|v_1-v|\,|\cos\theta| M(v_1)\d\omega \d v_1.
\end{equation*}
\eqref{2eq1} holds since $\nu$ is just replacing $M$ in $\nu_M$ with $\frac{\mathcal{N}}{\mu(1-\mu)}$. The compactness of $K$ is a consequence of compactness criterion in \cite{ref44}, and \eqref{2eq3} is the corollary of the three properties above. For more details, see \cite{ref24, ref27, Zakrevskiy}.

\subsection{Estimates for the nonlinear operators $Q$ and $T$}
In this subsection, we devote to $L^2$-type estimates for the bilinear form $Q(f,g)$ and the trilinear form $T(f,g,h)$, which plays an important role in the nonlinear problem. Before stating the estimates, we need a useful Proposition. Recall $v^\prime$ and $v_1^\prime$ as in \eqref{1eq2}, using the pole coordinates with the direction of $v_1-v$ as the positive direction of $Z$-axis, then
\begin{equation*}
  \omega=(\omega_1,\omega_2,\omega_3)=(\sin\theta\cos\varphi,\,\sin\theta\sin\varphi,\,\cos\theta),\quad \theta\in \,[0,\pi],\;\varphi\in\,[0,2\pi],
\end{equation*}
thus the rotation through $\frac{\pi}{2}$:
\begin{equation*}
  \begin{aligned}
  \omega&\mapsto\omega_\bot=(-\cos\theta\cos\varphi,-\cos\theta\sin\varphi,\sin\theta),\\
  \theta&\mapsto\frac{\pi}{2}-\theta,\quad \varphi\mapsto\varphi-\pi,
  \end{aligned}
\end{equation*}
yields:
\begin{proposition}\label{prop2}
  Let $H$ be a measurable function with arguments $v^\prime$ and $v_1^\prime$ defined in \eqref{1eq2}, then
  \begin{equation}\label{2eq4}
  \int_{\mathbb{S}^{2}}|\cos\theta|H(v_1^\prime,v^\prime)\d\omega= \int_{\mathbb{S}^{2}}|\cos\theta|H(v^\prime,v_1^\prime)\d\omega.
  \end{equation}
\end{proposition}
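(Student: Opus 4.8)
The plan is to read the claimed identity as a change of variables on $\mathbb{S}^2$ generated by the rotation $\omega\mapsto\omega_\perp$ displayed just before the statement; the point is that this rotation interchanges the two post-collisional velocities, while the weighted measure $|\cos\theta|\,\d\omega$ is invariant under it.

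First I would fix $v,v_1$, set $u=v_1-v$ and, as in the preamble, use the pole coordinates in which $u/|u|$ is the positive $Z$-axis, so that $\omega=\sin\theta\,e_\varphi+\cos\theta\,e_Z$ with $e_\varphi=(\cos\varphi,\sin\varphi,0)$, $u\cdot\omega=|u|\cos\theta$, and hence
\[
v^\prime=v+|u|\cos\theta\,\omega,\qquad v_1^\prime=v_1-|u|\cos\theta\,\omega .
\]
The geometric core is that, with $\omega_\perp=-\cos\theta\,e_\varphi+\sin\theta\,e_Z$,
\[
v^\prime(\omega_\perp)=v_1^\prime(\omega),\qquad v_1^\prime(\omega_\perp)=v^\prime(\omega).
\]
This is a one-line check: $u\cdot\omega_\perp=|u|\sin\theta$, so $v+(u\cdot\omega_\perp)\omega_\perp=v-|u|\sin\theta\cos\theta\,e_\varphi+|u|\sin^2\theta\,e_Z$, while $v_1-|u|\cos\theta\,\omega=v+|u|e_Z-|u|\sin\theta\cos\theta\,e_\varphi-|u|\cos^2\theta\,e_Z$ equals the same vector; the second identity is obtained in the same way. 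In particular $H\big(v_1^\prime(\omega_\perp),v^\prime(\omega_\perp)\big)=H\big(v^\prime(\omega),v_1^\prime(\omega)\big)$.

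Next I would track the weighted surface measure through the substitution. In the chart $(\theta,\varphi)$ one has $\d\omega=\sin\theta\,\d\theta\,\d\varphi$; the rotation corresponds to $\theta\mapsto\widetilde\theta$, $\varphi\mapsto\widetilde\varphi=\varphi-\pi$ with $\cos\widetilde\theta=\sin\theta$ (the third component of $\omega_\perp$), hence $\sin\widetilde\theta=|\cos\theta|$, and its angular Jacobian has modulus $1$; therefore
\[
|\cos\theta|\,\d\omega=|\cos\theta|\,\sin\theta\,\d\theta\,\d\varphi=|\cos\widetilde\theta|\,\sin\widetilde\theta\,\d\widetilde\theta\,\d\widetilde\varphi=|\cos\theta_{\omega_\perp}|\,\d\omega_\perp .
\]
Since $v^\prime$, $v_1^\prime$ (and thus $H(v_1^\prime,v^\prime)$) and $|\cos\theta|$ are all even in $\omega$ — the collision depends on $\omega$ only through $[(v_1-v)\cdot\omega]\,\omega$ — I may replace $\int_{\mathbb{S}^2}$ by $2\int_{\mathbb{S}^2_+}$ over a fixed hemisphere, on which $\omega\mapsto\omega_\perp$ is, up to a null set, an involution preserving $|\cos\theta|\,\d\omega$. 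Applying this change of variables to $\int_{\mathbb{S}^2_+}|\cos\theta|\,H(v_1^\prime,v^\prime)\,\d\omega$ and using the swap identity turns it into $\int_{\mathbb{S}^2_+}|\cos\theta|\,H(v^\prime,v_1^\prime)\,\d\omega$; doubling gives \eqref{2eq4}.

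The delicate point is this last bookkeeping step, not the geometry: $\omega\mapsto\omega_\perp$ is \emph{not} a bijection of $\mathbb{S}^2$ (it identifies $\omega$ with $-\omega$ and has image a closed hemisphere), and it does \emph{not} preserve the bare measure $\d\omega$ — it is precisely the factor $|\cos\theta|$, balanced against the Jacobian of the angular substitution, that makes $|\cos\theta|\,\d\omega$ invariant. This is why one first reduces to a hemisphere (equivalently, keeps careful track of the range of $\theta$ in the $(\theta,\varphi)$-chart) before invoking the change of variables; everything else is the elementary computation above.
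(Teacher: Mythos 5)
Your proof is correct and follows exactly the route the paper intends: the rotation $\omega\mapsto\omega_\perp$ displayed just before the proposition, which swaps $v^\prime$ and $v_1^\prime$ while leaving $|\cos\theta|\,\d\omega$ invariant. The paper leaves the verification implicit, whereas you also supply the (worthwhile) bookkeeping that $\omega\mapsto\omega_\perp$ is only an involution after reducing to a hemisphere by evenness in $\omega$, and that the weight $|\cos\theta|$ is what compensates the angular Jacobian.
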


Now we establish the estimates for $Q$ and $T$. We always use the fact that $M$ and $\mu(1-\mu)$ are bounded by each other and the relation \eqref{2eq1}.

\begin{lemma}\label{lem1}
  Let $f,\,g,\,h$ and $\widetilde{h}$ be smooth functions, then for $Q(f,g)$ we have
  \begin{equation}\label{2eq5}
    \left|\big(Q(f,g),\widetilde{h}\big)\right|\leqslant C\int_{\mathbb{R}^3} \Big(|\nu^{1/2}f|_{L_v^2}|g|_{L_v^2}+|f|_{L_v^2}|\nu^{1/2}g|_{L_v^2}\Big)\left|\nu^{1/2}\widetilde{h}\right|_{L_v^2}\d x ,
  \end{equation}
  and
  \begin{equation}\label{2eq6}
    \left\|\big\langle Q(\overline{f},\overline{g}),\widetilde{h}\big\rangle\right\|\leqslant C\sup\limits_{x,v}\left|(\mu(1-\mu))^{\frac{1}{4}}\widetilde{h}\right|\sup\limits_{x}|f|_{L_v^2}\|g\|;
  \end{equation}
  for $T(f,\,g,\,h)$ we have
  \begin{equation}\label{2eq7}
    \left|\big(T(f,g,h),\widetilde{h}\big)\right|\leqslant C\int_{\mathbb{R}^3}\left(|\nu^{1/2}f|_{L_v^2}|g|_{L_v^2}+|f|_{L_v^2}|\nu^{1/2}g|_{L_v^2}\right) |h|_{L_v^2}\left|\nu^{1/2}\widetilde{h}\right|_{L_v^2}\d x,
  \end{equation}
  and
  \begin{equation}\label{2eq8}
    \left\|\left\langle T(\overline{f},\overline{g},\overline{h}),\widetilde{h}\right\rangle\right\|\leqslant  C\sup\limits_{x,v}\left|(\mu(1-\mu))^{\frac{1}{4}}\widetilde{h}\right|\sup\limits_{x}|f|_{L_v^2}\sup\limits_{x}|g|_{L_v^2}\|h\|.
  \end{equation}
  where $(\overline{f},\overline{g})$ and $(\overline{f},\overline{g},\overline{h})$ are permutations of $(f,g)$ and $(f,g,h)$ respectively.
\end{lemma}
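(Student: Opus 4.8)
The plan is to expand $Q=\sum_{i=1}^{6}Q_i$ and $T=\sum_{i=1}^{8}T_i$ according to \eqref{1eq13} and \eqref{1eq14} and to estimate each summand separately. Because the pairing against $\widetilde h$ carries the weight $\mu(1-\mu)\,\d v$, the factor $\mathcal N/(\mu(1-\mu))$ in each summand cancels against the test weight and leaves the genuinely decaying kernel $\mathcal N$. Three elementary facts then drive everything: (i) the two representations $\mathcal N=\mu_1^\prime\mu^\prime(1-\mu_1)(1-\mu)=\mu_1\mu(1-\mu_1^\prime)(1-\mu^\prime)$, together with $0\le\mu,\,1-\mu\le1$; (ii) $c\,M\le\mu(1-\mu)\le C\,M$ and $\mu\le CM$, so that a single factor $\mu$ at any one of $v,v_1,v^\prime,v_1^\prime$ already supplies full Gaussian decay at that velocity, together with energy conservation in the form $MM_1=M^\prime M_1^\prime$; and (iii) $\nu(v)\sim\langle v\rangle$ by \eqref{2eq1} and $|v-v_1|=|v^\prime-v_1^\prime|\le C\langle v\rangle\langle v_1\rangle$, so that the relative-velocity weight can be apportioned into the half-powers of $\nu$ on the right-hand sides of \eqref{2eq5}--\eqref{2eq8}.

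The only purely loss-type summand is $Q_2$, in which $f_1,g$ are evaluated at $v_1,v$. For it one uses $\mathcal N=\mu_1\mu(1-\mu_1^\prime)(1-\mu^\prime)$ and bounds the $v^\prime,v_1^\prime$-dependent factors by $1$, so that the $\omega$-integral reduces to the constant $\int_{\mathbb{S}^2}|\cos\theta|\,\d\omega$; one then has the Gaussians $\mu\mu_1$, bounds $|v-v_1|\le C\langle v\rangle\langle v_1\rangle$, reduces the $v_1$-integral of $f_1$ against $\mu_1$ to $C|f|_{L^2_v}$ by Cauchy--Schwarz, and closes with a final Cauchy--Schwarz in $v$: using $\mu\sim\mu(1-\mu)$ and $\langle v\rangle\sim\nu$, this produces $|\nu^{1/2}g|_{L^2_v}|\nu^{1/2}\widetilde h|_{L^2_v}$, which is of the form required by \eqref{2eq5}.

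Every other summand of $Q$ (namely $Q_1$ and $Q_3$--$Q_6$) and every summand of $T$ has at least one of $f,g,h$ at a primed velocity; for these one keeps $\widetilde h$ in place, applies Cauchy--Schwarz in $v$ (splitting $\nu^{-1/2}\cdot\nu^{1/2}$) to extract $|\nu^{1/2}\widetilde h|_{L^2_v}$, and for the remaining $v$-integral applies Cauchy--Schwarz in $(\omega,v_1)$ followed by the pre-/post-collisional change of variables $(v,v_1)\mapsto(v^\prime,v_1^\prime)$ at fixed $\omega$ --- measure preserving, with $|v-v_1|$ and $|\cos\theta|$ invariant --- together with Proposition \ref{prop2}; the resulting integrals are then estimated by the same Cauchy--Schwarz and Gaussian-absorption steps as for $Q_2$. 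The crucial bookkeeping point, which I expect to be the main obstacle, is to arrange enough Gaussian decay simultaneously at all four of $v,v_1,v^\prime,v_1^\prime$ before this reduction: $\mathcal N$ alone decays in only one of the pairs $\{v,v_1\}$, $\{v^\prime,v_1^\prime\}$, so for the summands whose functions straddle both pairs one must split $\mathcal N\lesssim(\mu\mu_1)^{1/2}(\mu^\prime\mu_1^\prime)^{1/2}$ and feed in the surplus explicit $\mu$-factors carried by that particular $Q_i$ or $T_i$ --- exactly the terms absent in the perturbed-Boltzmann computation, which are the technical novelty here. This is most delicate for the mixed summands, such as $Q_3$--$Q_6$ and $T_7$ and $T_8$, where those surplus $\mu$'s are attached to a function sitting at a velocity that $\mathcal N$ does not directly control, so that the two forms of $\mathcal N$, the identity $MM_1=M^\prime M_1^\prime$, and the apportionment of $|v-v_1|$ must all be balanced so as to land precisely on the weighted norms in \eqref{2eq7}; the trilinearity of $T$ is accommodated by these same surplus $\mu$'s, which is why $h$ enters \eqref{2eq7} and \eqref{2eq8} only through $|h|_{L^2_v}$ with no $\nu$ weight.

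Finally, \eqref{2eq6} and \eqref{2eq8} follow from the pointwise-in-$x$ versions of \eqref{2eq5} and \eqref{2eq7} by loading \emph{all} spare velocity weights --- in particular the $\nu^{1/2}$ that these place on $f$ or $g$ --- onto $\widetilde h$; this is legitimate since $|\nu^{k}\widetilde h|_{L^2_v}\le C_k\sup_{v}\bigl|(\mu(1-\mu))^{1/4}\widetilde h\bigr|$ for every $k\ge0$, as $(\mu(1-\mu))^{1/2}$ beats any polynomial. One is then left with $\bigl\||f|_{L^2_v}|g|_{L^2_v}\bigr\|_{L^2_x}$ for $Q$ and $\bigl\||f|_{L^2_v}|g|_{L^2_v}|h|_{L^2_v}\bigr\|_{L^2_x}$ for $T$, which by H\"{o}lder in $x$ are bounded by $\sup_{x}|f|_{L^2_v}\,\|g\|$ and $\sup_{x}|f|_{L^2_v}\sup_{x}|g|_{L^2_v}\,\|h\|$ respectively, giving \eqref{2eq6} and \eqref{2eq8}.
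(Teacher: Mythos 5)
Your proposal is correct and follows essentially the same route as the paper: term-by-term estimation of the $Q_i$ and $T_i$ via Cauchy--Schwarz, Proposition \ref{prop2} and the collision symmetries, Gaussian absorption of all polynomial weights through the $\mu$-factors (your geometric-mean splitting $\mathcal N\lesssim(\mu\mu_1)^{1/2}(\mu^\prime\mu_1^\prime)^{1/2}$ is just a uniform packaging of the explicit Gaussian bookkeeping the paper carries out term by term), and loading the spare $\nu$-weights onto $\widetilde h$ via $\sup_{x,v}|(\mu(1-\mu))^{1/4}\widetilde h|$ for \eqref{2eq6} and \eqref{2eq8}. The one imprecision is that for the summands where $v$ is held fixed and Cauchy--Schwarz is applied in $(\omega,v_1)$ only, the substitution $(v,v_1)\mapsto(v^\prime,v_1^\prime)$ is not available (since $v$ is frozen); what is actually needed there is the reduction $v_1\mapsto V=v_1-v$ together with $\d\omega\,\d V=2\,\d V_{\|}\,\d V_{\bot}/|V_{\|}|^2$ as in \eqref{2eq13}, the $|V_{\|}|^{-2}$ Jacobian being compensated by the relative-velocity factors --- this is exactly the device the paper deploys for $Q_5$, $Q_6$ and the terms $I_2^2$, $I_3^2$, $I_4^1$, $I_4^2$.
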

\begin{proof}
  For simplicity, we suppress the $x$-dependence in $f(x,v),\,g(x,v),\,h(x,v)$ and $\widetilde{h}(x,v)$, to prove \eqref{2eq5}, it's sufficient to establish the same inequality for $Q_i,\,i=1,2,\cdots,6$. Recall that
  \begin{equation*}
    Q_1(f,g)=\iint\limits_{\mathbb{R}^3\times\mathbb{S}^2}|v_1-v|\,|\cos\theta|\frac{\mathcal{N}}{\mu(1-\mu)} \bigg(1-\mu_1^\prime-\mu^\prime\bigg)f_1^\prime g^\prime \d\omega \d v_1,
  \end{equation*}
  and
  \begin{equation*}
    Q_2(f,g)=-\iint\limits_{\mathbb{R}^3\times\mathbb{S}^2}|v_1-v|\,|\cos\theta|\frac{\mathcal{N}}{\mu(1-\mu)} \bigg(1-\mu_1-\mu\bigg)f_1g\d\omega \d v_1,
  \end{equation*}
  note that
  \begin{equation*}
    \frac{\mathcal{N}}{\mu(1-\mu)} \bigg(1-\mu_1^\prime-\mu^\prime\bigg),\quad\frac{\mathcal{N}}{\mu(1-\mu)} \bigg(1-\mu_1-\mu\bigg)
  \end{equation*}
  is bounded by $M_1$, thus, similar to Lemma 2.3 in \cite{ref26}, we have for $i=1,2$,
  \begin{equation}\label{2eq9}
    \left|\big\langle Q_i(f,g),\widetilde{h}\big\rangle\right|
    \leqslant C\left(|\nu^{1/2}f|_{L_v^2}|g|_{L_v^2}+|f|_{L_v^2}|\nu^{1/2}g|_{L_v^2}\right)|\nu^{1/2}\widetilde{h}|_{L_v^2}.
  \end{equation}

  In the following, we will deal with the nonlinear terms that are different from those in the case of perturbed Boltzmann equation. For
  \begin{equation*}
    Q_3(f,g)=\iint\limits_{\mathbb{R}^3\times\mathbb{S}^2}|v_1-v|\,|\cos\theta|\frac{\mathcal{N}}{\mu(1-\mu)} \bigg(\mu_1^\prime f_1^\prime+\mu^\prime f^\prime \bigg)f\d\omega \d v_1
  \end{equation*}
  we have
  \begin{equation*}
    \begin{aligned}
    \left|\big\langle Q_3(f,g),\widetilde{h}\big\rangle\right|\leqslant &C \iiint\limits_{\mathbb{R}^3\times\mathbb{R}^3\times\mathbb{S}^2}|v_1-v|\,|\cos\theta| M_1MM_1^\prime |f_1^\prime|\,|f|\,|\widetilde{h}|\d\omega \d v_1 \d v\\
    &+C\iiint\limits_{\mathbb{R}^3\times\mathbb{R}^3\times\mathbb{S}^2}|v_1-v|\,|\cos\theta| M_1MM^\prime |f^\prime |\,|f|\,|\widetilde{h}|\d\omega \d v_1 \d v,
    \end{aligned}
  \end{equation*}
  Due to Proposition \ref{prop2}, we need only to estimate the second term on the right hand side above. Using Cauchy-Schwarz inequality to get
  \begin{equation}\label{2eq10}
    \begin{aligned}
    \left|\big\langle Q_3(f,g),\widetilde{h}\big\rangle\right|\leqslant&\left(\iiint_{\mathbb{R}^3\times\mathbb{R}^3\times\mathbb{S}^2}|v_1^\prime-v^\prime| M_1^{\frac{1}{2}}M^{\frac{1}{2}}M_1^\prime |f_1^\prime|^2\d\omega \d v_1^\prime \d v^\prime\right)^{\frac{1}{2}}\\
    &\times\left(\iiint_{\mathbb{R}^3\times\mathbb{R}^3\times\mathbb{S}^2}|v_1-v| M_1^{\frac{3}{2}}M^{\frac{3}{2}}M_1^\prime|f|^2\,|\widetilde{h}|^2\d\omega \d v_1 \d v\right)^{\frac{1}{2}}\\
    \leqslant & C\left(\int_{\mathbb{S}^2}\int_{\mathbb{R}_{v_1^\prime}^3}|f_1^\prime|^2 M_1^\prime\d v_1^\prime \left(\int_{\mathbb{R}_{v^\prime}^3}|v_1^\prime-v^\prime|{M_1^\prime}^\frac{1}{2} {M^\prime}^\frac{1}{2} \d v^\prime\right) \d\omega\right)^{\frac{1}{2}}|g|_{L_v^2}|\nu^{1/2}\widetilde{h}|_{L_v^2}\\
    \leqslant & C|\nu^{1/2}f|_{L_v^2}|g|_{L_v^2}|\nu^{1/2}\widetilde{h}|_{L_v^2}.
    \end{aligned}
  \end{equation}
  where we have used that $|v_1-v|=|v_1^\prime-v^\prime|$ and $|v_1-v| M_1^\frac{1}{2}\leqslant C (1+|v|)$.

  The estimate for $Q_4$ is same as $Q_3$, we also have
  \begin{equation}\label{2eq11}
    \left|\big\langle Q_4(f,g),\widetilde{h}\big\rangle\right|\leqslant C|\nu^{1/2}f|_{L_v^2}|g|_{L_v^2}|\nu^{1/2}\widetilde{h}|_{L_v^2}.
  \end{equation}

  The estimate for $Q_5$ and $Q_6$ need more care. Recall that
  \begin{equation*}
    Q_5(f,g)=\iint\limits_{\mathbb{R}^3\times\mathbb{S}^2}|v_1-v|\,|\cos\theta|\frac{\mathcal{N}}{\mu(1-\mu)} \bigg(\mu_1^\prime f_1^\prime+\mu^\prime f^\prime \bigg)g\d\omega \d v_1
  \end{equation*}
  By Proposition \ref{prop2}, using Cauchy-Schwarz inequality we have
  \begin{equation*}
    \begin{aligned}
    \left|\big\langle Q_5(f,g),\widetilde{h}\big\rangle\right| \leqslant & C\iiint\limits_{\mathbb{R}^3\times\mathbb{R}^3\times\mathbb{S}^2}|v_1-v|\,|\cos\theta| MM_1M_1^\prime |f_1^\prime|\,|g|\,|\widetilde{h}|\d\omega \d v_1\d v\\
    &\quad+C\iiint\limits_{\mathbb{R}^3\times\mathbb{R}^3\times\mathbb{S}^2}|v_1-v|\,|\cos\theta|MM_1M^\prime |f^\prime |\,|g|\,|\widetilde{h}|\d\omega \d v_1\d v\\
    \leqslant & C\int\limits_{\mathbb{R}^3}|g|\,|\widetilde{h}|M\d v\left(\int_{\mathbb{R}^3}M_1\d v_1\right)^{\frac{1}{2}}\left(\iint_{\mathbb{R}^3\times\mathbb{S}^2} |v_1-v|^2\,|\cos\theta|^2M_1{M^\prime}^2|f^\prime |^2\d v_1\right)^{\frac{1}{2}},
    \end{aligned}
  \end{equation*}
  using the notation
  \begin{equation}\label{2eq12}
    V_{\|}=(V\cdot\omega)\omega,\qquad V_{\bot}=V-(V\cdot\omega)\omega,
  \end{equation}
  then the variable changing $v_1-v\mapsto V$ and the fact (see \cite{ref24})
  \begin{equation}\label{2eq13}
    d\omega dV= \frac{2dV_{\|}dV_{\bot}}{|V_{\|}|^2},
  \end{equation}
  implies
  \begin{equation}\label{2eq14}
    \begin{aligned}
    \left|\big\langle Q_5(f,g),\widetilde{h}\big\rangle\right| \leqslant&C\int_{\mathbb{R}_v^3}|g|\,|\widetilde{h}|M\d v\\
    &\times\left(\int\limits_{\mathbb{R}_{V_{\|}}^3}|V_{\|}|^2\left[\int_{\mathbb{R}_{V_{\bot}}^2} M(v+V)M(v+V_{\|})\d V_{\bot}\right]|f(v+V_{\|})|^2\frac{M(v+V_{\|})\d V_{\|}}{|V_{\|}|^2} \right)^{\frac{1}{2}}\\
    \leqslant &C \int_{\mathbb{R}_v^3}|g|\,|\widetilde{h}|M\d v \left(\int_{\mathbb{R}_{V_{\|}}^3}\left[\int_{\mathbb{R}_{V_{\bot}}^2} M^{\frac{1}{4}}(V_{\bot})\d V_{\bot}\right]|f(v+V_{\|})|^2M(v+V_{\|})\d V_{\|} \right)^{1/2}\\
    \leqslant &C |f|_{L_v^2}|g|_{L_v^2}|\widetilde{h}|_{L_v^2}.
    \end{aligned}
  \end{equation}
  here we have used the fact
  \begin{equation*}
    \frac{1}{2}|v+V|^2+\frac{1}{2}|v+V_{\|}|^2\geqslant \frac{1}{8}|V_{\bot}|^2.
  \end{equation*}

  Similar to $Q_5$, we also have
  \begin{equation}\label{2eq15}
    \left|\big\langle Q_6(f,g),\widetilde{h}\big\rangle\right|\leqslant C|f|_{L_v^2}|g|_{L_v^2}|\widetilde{h}|_{L_v^2}.
  \end{equation}
  Then by summing up \eqref{2eq9}-\eqref{2eq11}, \eqref{2eq14}-\eqref{2eq15} and further integrating over $\mathbb{R}_x^3$, we complete the proof of \eqref{2eq5}.

  To prove \eqref{2eq6}, we need only to put the weight function $\nu(v)$ on the function $\widetilde{h}$ when we deal with $\left\langle Q_1(f,g),\widetilde{h}\right\rangle,\,\left\langle Q_2(f,g),\widetilde{h}\right\rangle,\,\cdots,\left\langle Q_6(f,g),\widetilde{h}\right\rangle$. We can bound $\left\langle Q(f,g),\widetilde{h}\right\rangle$ by
  \begin{equation*}
    \begin{aligned}
    &C\left[\int_{\mathbb{R}^3}|f(x,v)|^2\mu(1-\mu)\d v\right]^{1/2}\left[\int_{\mathbb{R}^3}|g(x,v)|^2\mu(1-\mu)\d v\right]^{1/2} \left[\int_{\mathbb{R}^3}\nu^2(v)|\widetilde{h}(x,v)|^2\mu(1-\mu)\d v\right]^{1/2}\\
    &\leqslant C \sup\limits_{x,v}\left|\left(\mu(1-\mu)\right)^{\frac{1}{4}}\widetilde{h}(x,v)\right|\left[\int_{\mathbb{R}^3}|f(x,v)|^2\mu(1-\mu)\d v\right]^{1/2} \left[\int_{\mathbb{R}^3}|g(x,v)|^2\mu(1-\mu)\d v\right]^{1/2}.
    \end{aligned}
  \end{equation*}
  We conclude that \eqref{2eq6} holds by integrating over $x$ and taking $L^2$ and $L^\infty$ norm in $x$ for the last two factors.

    We then turn to study the trilinear form $T(f,g,h)$. For $T_1(f,g,h)$, we have
  \begin{equation*}
  \begin{aligned}
  |T_1(f,g,h)|=&\left|\iint_{\mathbb{R}^3\times\mathbb{S}^2}|v_1-v|\,|\cos\theta| \frac{\mathcal{N}\mu_1}{1-\mu} f_1g \bigg(h_1^\prime+h^\prime\bigg)\d\omega \d v_1\right|\\
  \leqslant & C\iint\limits_{\mathbb{R}^3\times\mathbb{S}^2}|v_1-v|\,|\cos\theta| MM_1^2|f_1gh_1^\prime|\d\omega \d v_1\\
  &+C\iint\limits_{\mathbb{R}^3\times\mathbb{S}^2}|v_1-v|\,|\cos\theta|MM_1^2|f_1 g h^\prime|\d\omega \d v_1\\
  &\stackrel{\triangle}{=}I_1^1+I_1^2.
  \end{aligned}
  \end{equation*}
  By Proposition \ref{prop2}, it's sufficient to estimate $I_1^2$. We use Cauchy-Schwarz inequality to get
\begin{equation*}
\begin{aligned}
  I_1^2\leqslant &\left(\iint_{\mathbb{R}^3\times\mathbb{S}^2}|v_1-v|\,|\cos\theta| M^{\frac{1}{2}}M_1^2|f_1g|^2\d\omega \d v_1\right)^{1/2}\times\\
  &\left(\iint_{\mathbb{R}^3\times\mathbb{S}^2}|v_1-v|\,|\cos\theta| M^{\frac{3}{2}}M_1^2|h^\prime|^2\d\omega \d v_1\right)^{1/2}.
\end{aligned}
\end{equation*}
Note that $|v_1-v|M^{\frac{1}{2}}M_1$ is bounded, the first factor above is bounded by $|f|_{L_v^2}|g|$. The second factor need more careful estimate, note that $MM_1=M^\prime M_1^\prime$, the variable changing $v_1-v\mapsto V$ and \eqref{2eq12}, \eqref{2eq13} imply the second factor is bounded by
\begin{equation*}
\begin{aligned}
  &\left(\iint_{\mathbb{R}^3\times\mathbb{R}^2}|V_{\|}|M^{1/2}M(v+V)|h(v+V_{\|})|^2\frac{2M(v+V_{\|})}{|V_{\|}|^2}\d V_{\bot}\d V_{\|}\right)^{1/2}\\
  \leqslant & C\left(\int_{\mathbb{R}_{V_{\|}}^3}\frac{1}{|V_{\|}|}M^{\frac{1}{4}}(V_{\|})|h(v+V_{\|})|^2M(v+V_{\|})\d V_{\|} \int_{\mathbb{R}_{V_{\bot}}^2}M^{\frac{1}{4}}(V_{\bot})\d V_{\bot}\right)^{1/2}\\
  \leqslant & C |h|_{L_v^2},
\end{aligned}
\end{equation*}
here we have used $M^{1/2}(v)M(v+V)\leqslant M^{\frac{1}{4}}(V)$. Then
\begin{equation*}
  I_1^2 \leqslant C |f|_{L_v^2}|h|_{L_v^2}|g|,
\end{equation*}
hence
\begin{equation}\label{2eq16}
\left|\left\langle T_1(f,g,h), \widetilde{h}\right\rangle\right|\leqslant C |f|_{L_v^2}|g|_{L_v^2}|h|_{L_v^2}|\widetilde{h}|_{L_v^2}.
\end{equation}

For $T_2(f,g,h)$, we have
\begin{equation*}
\begin{aligned}
  |T_2(f,g,h)|=&\left|\iint_{\mathbb{R}^3\times\mathbb{S}^2}|v_1-v|\,|\cos\theta| \frac{\mathcal{N}\mu_1^\prime\mu^\prime}{\mu(1-\mu)} f_1^\prime g^\prime  \bigg(h_1+h\bigg)\d\omega \d v_1\right|\\
  \leqslant & C\iint\limits_{\mathbb{R}^3\times\mathbb{S}^2}|v_1-v|\,|\cos\theta| M_1M^\prime M_1^\prime|h_1||f_1^\prime g^\prime |\d\omega \d v_1\\
  &+C\iint\limits_{\mathbb{R}^3\times\mathbb{S}^2}|v_1-v|\,|\cos\theta| M_1M^\prime M_1^\prime|h||f_1^\prime g^\prime |\d\omega \d v_1\\
  &\stackrel{\triangle}{=}I_2^1+I_2^2.
\end{aligned}
\end{equation*}
Next we use different methods to estimate $\left\langle I_2^1, \widetilde{h}\right\rangle$ and $\left\langle I_2^2, \widetilde{h}\right\rangle$ respectively. For $\left\langle I_2^1, \widetilde{h}\right\rangle$, we use Cauchy-Schwarz inequality to get
\begin{equation}\label{2eq17}
  \begin{aligned}
  \left|\left\langle I_2^1, \widetilde{h}\right\rangle\right|\leqslant & C
  \iiint\limits_{\mathbb{R}^3\times\mathbb{R}^3\times\mathbb{S}^2}|v_1-v|\,|\cos\theta| |f_1^\prime g^\prime ||h_1||\widetilde{h}|MM_1M^\prime M_1^\prime\d\omega \d v_1\d v\\
  \leqslant & C
  \left(\iiint_{\mathbb{R}^3\times\mathbb{R}^3\times\mathbb{S}^2} |f_1^\prime|^2|g^\prime |^2 M^\prime M_1^\prime\d\omega\d v_1^\prime \d v^\prime\right)^{1/2}\times\\
  &\left(\iiint_{\mathbb{R}^3\times\mathbb{R}^3\times\mathbb{S}^2} |h_1|^2|\widetilde{h}|^2MM_1\d\omega \d v_1\d v\right)^{1/2}\\
  \leqslant & C |f|_{L_v^2}|g|_{L_v^2}|h|_{L_v^2}|\widetilde{h}|_{L_v^2},
  \end{aligned}
\end{equation}
here we have used that $|v_1-v|MM_1$ is bounded and $\d\omega \d v_1^\prime \d v^\prime=\d\omega \d v_1\d v$. We further estimate $I_2^2$ then $\left\langle I_2^2, \widetilde{h}\right\rangle$. Using Cauchy-Schwarz inequality again to get
\begin{equation*}
  \begin{aligned}
  I_2^2\leqslant & C \left(\iint_{\mathbb{R}^3\times\mathbb{S}^2}|v_1-v|\,|\cos\theta| M_1{M^\prime}^\frac{3}{4} {M_1^\prime}^\frac{5}{4}|f_1^\prime|^2\d\omega \d v_1\right)^{1/2}\times\\
  &\left(\iint_{\mathbb{R}^3\times\mathbb{S}^2}|v_1-v|\,|\cos\theta| M_1{M^\prime}^\frac{5}{4} {M_1^\prime}^\frac{3}{4}|g^\prime |^2\d\omega \d v_1\right)^{1/2}|h|.
  \end{aligned}
\end{equation*}
 By Proposition \ref{prop2}, we need only to estimate the second factor on the right hand side above, then using variable changing $v_1-v\mapsto V$ and \eqref{2eq12}, \eqref{2eq13}, it's bounded by
\begin{equation*}
  \begin{aligned}
  &\left(\iint_{\mathbb{R}^3\times\mathbb{S}^2}|v_1-v|\,|\cos\theta| M^{\frac{1}{4}}M_1^{\frac{5}{4}}|f^\prime |^2M^\prime\d\omega \d v_1\right)^{1/2}\\
  \leqslant & C \left(\iint_{\mathbb{R}^3\times\mathbb{R}^2}|V_{\|}| M^{\frac{1}{4}}M^{\frac{5}{4}}(v+V)|f(v+V_{\|})|^2\frac{2M(v+V_{\|})}{|V_{\|}|^2}\d V_{\bot}\d V_{\|}\right)^{1/2}\\
  \leqslant & C\left(\int_{\mathbb{R}^3}\frac{1}{|V_{\|}|}M^{\frac{1}{8}}(V_{\|})|f(v+V_{\|})|^2M(v+V_{\|})\d V_{\|} \int_{\mathbb{R}^2}M^{\frac{1}{8}}(V_{\bot})\d V_{\bot}\right)^{1/2}\\
  \leqslant & C|f|_{L_v^2},
  \end{aligned}
\end{equation*}
 where we have used the fact $M^{\frac{1}{4}}M^{\frac{5}{4}}(v+V)\leqslant M^{\frac{1}{4}}(V)$. The estimate for the second factor is same as above. Then we get
\begin{equation}\label{2eq18}
    \left|\left\langle I_2^2, \widetilde{h}\right\rangle\right|\leqslant C|f|_{L_v^2}|g|_{L_v^2}|h|_{L_v^2}|\widetilde{h}|_{L_v^2},
\end{equation}
hence the estimates \eqref{2eq17} and \eqref{2eq18} imply that
\begin{equation}\label{2eq19}
  \left|\left\langle T_2(f,g,h), \widetilde{h}\right\rangle\right|\leqslant C |f|_{L_v^2}|g|_{L_v^2}|h|_{L_v^2}|\widetilde{h}|_{L_v^2}.
\end{equation}

For $T_3(f,g,h)$, we have
\begin{equation*}
\begin{aligned}
  |T_3(f,g,h)|=&\left|\iint_{\mathbb{R}^3\times\mathbb{S}^2}|v_1-v|\,|\cos\theta| \frac{\mathcal{N}}{\mu(1-\mu)}\mu_1\mu_1^\prime \bigg(f_1^\prime g^\prime h_1- f_1gh_1^\prime\bigg)\d\omega \d v_1\right|\\
  \leqslant & C\iint\limits_{\mathbb{R}^3\times\mathbb{S}^2}|v_1-v|\,|\cos\theta| M_1^2M_1^\prime|f_1^\prime g^\prime ||h_1|\d\omega \d v_1\\
  &+C\iint\limits_{\mathbb{R}^3\times\mathbb{S}^2}|v_1-v|\,|\cos\theta| M_1^2M_1^\prime|f_1g||h_1^\prime|\d\omega \d v_1\\
  &\stackrel{\triangle}{=}I_3^1+I_3^2.
\end{aligned}
\end{equation*}
Then for $I_3^1$, we use Cauchy-Schwarz inequality to get
\begin{equation}\label{2eq20}
  \begin{aligned}
  \left|\left\langle I_3^1, \widetilde{h}\right\rangle\right|\leqslant & C
  \left(\iiint_{\mathbb{R}^3\times\mathbb{R}^3\times\mathbb{S}^2} |v_1-v|\,|\cos\theta|M_1 |f_1^\prime|^2|g^\prime |^2 M^\prime M_1^\prime\d\omega \d v_1^\prime \d v^\prime\right)^{1/2}\\
  &\left(\iiint_{\mathbb{R}^3\times\mathbb{R}^3\times\mathbb{S}^2} |v_1-v|\,|\cos\theta|M_1 |h_1|^2|\widetilde{h}|^2MM_1\d\omega \d v_1 \d v\right)^{1/2}\\
  \leqslant & C \left(\iiint_{\mathbb{R}^3\times\mathbb{R}^3\times\mathbb{S}^2} (\nu(v_1^\prime)+\nu(v^\prime))|f_1^\prime|^2|g^\prime |^2M^\prime M_1^\prime\d\omega \d v_1^\prime \d v^\prime\right)^{1/2}\\
  &\left(\iiint_{\mathbb{R}^3\times\mathbb{R}^3\times\mathbb{S}^2}\nu(v)|h_1|^2|\widetilde{h}|^2MM_1\d\omega \d v_1 \d v\right)^{1/2}\\
  \leqslant &C \left(|\nu^{1/2}f|_{L_v^2}|g|_{L_v^2}+|f|_{L_v^2}|\nu^{1/2}g|_{L_v^2}\right) |h|_{L_v^2}|\nu^{1/2}\widetilde{h}|_{L_v^2},
  \end{aligned}
\end{equation}
where we have used $|v_1-v|M_1\leqslant C(1+|v|)\leqslant C(\nu(v_1^\prime)+\nu(v^\prime))$.
For $I_3^2$, by Proposition \ref{prop2}, using Cauchy-Schwarz inequality again to get
\begin{equation*}
  \begin{aligned}
  I_3^2\leqslant & C \left(\iint_{\mathbb{R}^3\times\mathbb{S}^2}|f_1|^2M_1 \d\omega \d v_1\right)^{1/2} \left(\iint_{\mathbb{R}^3\times\mathbb{S}^2}(|v_1-v||\cos\theta| )^2{M^\prime}^2M_1^3 |h^\prime|^2 \d\omega \d v_1\right)^{1/2}|g|\\
  \leqslant & C|f|_{L_v^2}|g|\left(\iint_{\mathbb{R}^3\times\mathbb{R}^2}|V_{\|}|^2 M(v+V_{\|})M^3(v+V)|h(v+V_{\|})|^2\frac{M(v+V_{\|})}{|V_{\|}|^2}\d V_{\bot}\d V_{\|}\right)^{1/2}\\
  \leqslant & C|f|_{L_v^2}|h|_{L_v^2}|g|,
  \end{aligned}
\end{equation*}
here we have used $M(v+V_{\|})M^3(v+V)\leqslant M^{\frac{1}{2}}(V_{\bot})$. Then
\begin{equation}\label{2eq21}
    \left|\left\langle I_3^2, \widetilde{h}\right\rangle\right|\leqslant C|f|_{L_v^2}|g|_{L_v^2}|h|_{L_v^2}|\widetilde{h}|_{L_v^2},
\end{equation}
hence by \eqref{2eq20} and \eqref{2eq21} we get
\begin{equation}\label{2eq22}
  \left|\left\langle T_3(f,g,h), \widetilde{h}\right\rangle\right|\leqslant C\left(|\nu^{1/2}f|_{L_v^2}|g|_{L_v^2}+|f|_{L_v^2}|\nu^{1/2}g|_{L_v^2}\right) |h|_{L_v^2}|\nu^{1/2}\widetilde{h}|_{L_v^2}.
\end{equation}

For $T_4(f,g,h)$, we have
\begin{equation*}
\begin{aligned}
  |T_4(f,g,h)|=&\left|\iint_{\mathbb{R}^3\times\mathbb{S}^2}|v_1-v|\,|\cos\theta| \frac{\mathcal{N}\mu_1^\prime}{1-\mu}\bigg(f_1^\prime g^\prime h -f_1gh_1^\prime\bigg)\d\omega \d v_1\right|\\
  \leqslant & C\iint\limits_{\mathbb{R}^3\times\mathbb{S}^2}|v_1-v|\,|\cos\theta| MM_1M_1^\prime|f_1^\prime g^\prime ||h|\d\omega \d v_1\\
  &+C\iint\limits_{\mathbb{R}^3\times\mathbb{S}^2}|v_1-v|\,|\cos\theta| MM_1M_1^\prime|f_1g||h_1^\prime|\d\omega \d v_1\\
  \stackrel{\triangle}{=}&I_4^1+I_4^2.
\end{aligned}
\end{equation*}

Using the same way to treat $I_4^1$ and $I_4^2$ as to treat $I_2^2$, we get
\begin{equation*}
  I_4^1\leqslant C|f|_{L_v^2}|g|_{L_v^2}|h|,\qquad I_4^2\leqslant C|f|_{L_v^2}|h|_{L_v^2}|g|.
\end{equation*}
Hence
\begin{equation}\label{2eq23}
  \begin{aligned}
  \left|\left\langle T_4(f,g,h), \widetilde{h}\right\rangle\right|\leqslant  C
   |f|_{L_v^2}|g|_{L_v^2}|h|_{L_v^2}|\widetilde{h}|_{L_v^2}.
  \end{aligned}
\end{equation}

For the rest terms in $T(f,g,h)$, we have the following relations:
\begin{itemize}
  \item The estimate for $T_5(f,g,h)$ is similar to the estimate for $T_3(f,g,h)$, we have
  \begin{equation}\label{2eq24}
  \left|\left\langle T_5(f,g,h), \widetilde{h}\right\rangle\right|\leqslant C\left(|\nu^{1/2}f|_{L_v^2}|g|_{L_v^2}+|f|_{L_v^2}|\nu^{1/2}g|_{L_v^2}\right) |h|_{L_v^2}|\nu^{1/2}\widetilde{h}|_{L_v^2}.
\end{equation}
  \item The estimate for $T_6(f,g,h)$ is similar to the estimate for $T_4(f,g,h)$, we have
  \begin{equation}\label{2eq25}
  \left|\left\langle T_6(f,g,h), \widetilde{h}\right\rangle\right|\leqslant  C
   |f|_{L_v^2}|g|_{L_v^2}|h|_{L_v^2}|\widetilde{h}|_{L_v^2}.
\end{equation}
  \item The estimate for $T_7(f,g,h)$ is similar to the estimate for $T_1(f,g,h)$, we have
  \begin{equation}\label{2eq26}
  \left|\left\langle T_7(f,g,h), \widetilde{h}\right\rangle\right|\leqslant  C
   |f|_{L_v^2}|g|_{L_v^2}|h|_{L_v^2}|\widetilde{h}|_{L_v^2}.
   \end{equation}
  \item The estimate for $T_8(f,g,h)$ is similar to the estimate for $T_2(f,g,h)$, we have
  \begin{equation}\label{2eq27}
  \left|\left\langle T_8(f,g,h), \widetilde{h}\right\rangle\right|\leqslant C\left(|\nu^{1/2}f|_{L_v^2}|g|_{L_v^2}+|f|_{L_v^2}|\nu^{1/2}g|_{L_v^2}\right) |h|_{L_v^2}|\nu^{1/2}\widetilde{h}|_{L_v^2}.
  \end{equation}
\end{itemize}
Thus the relations above combining with \eqref{2eq16}, \eqref{2eq19}, \eqref{2eq22} and \eqref{2eq23} imply that
\begin{equation}\label{2eq28}
  \left|\left\langle T(f,g,h), \widetilde{h}\right\rangle\right|\leqslant C\left(|\nu^{1/2}f|_{L_v^2}|g|_{L_v^2}+|f|_{L_v^2}|\nu^{1/2}g|_{L_v^2}\right) |h|_{L_v^2}|\nu^{1/2}\widetilde{h}|_{L_v^2},
\end{equation}
we get \eqref{2eq7} by further integrating over $x$. To prove \eqref{2eq8}, we need only to put the weight function $\nu(v)$ on the function $\widetilde{h}$ in the estimates for $T_1(f,g,h),\cdots,T_8(f,g,h)$, we can bound $\left\langle T(f,g,h), \widetilde{h}\right\rangle$ by
  \begin{equation*}
    \begin{aligned}
    &C\left[\int_{\mathbb{R}^3}|f(x,v)|^2\mu(1-\mu)\d v\right]^{1/2}\left[\int_{\mathbb{R}^3}|g(x,v)|^2\mu(1-\mu)\d v\right]^{1/2} \\
    &\times\left[\int_{\mathbb{R}^3}|h(x,v)|^2\mu(1-\mu)\d v\right]^{1/2}\left[\int_{\mathbb{R}^3}\nu^2(v)|\widetilde{h}(x,v)|^2\mu(1-\mu)\d v\right]^{1/2}\\
    \leqslant &C \sup\limits_{x,v}\left(\left(\mu(1-\mu)\right)^{\frac{1}{4}}|\widetilde{h}(x,v)|\right)\left[\int_{\mathbb{R}^3}|f(x,v)|^2\mu(1-\mu)\d v\right]^{1/2} \\
    &\times\left[\int_{\mathbb{R}^3}|g(x,v)|^2\mu(1-\mu)\d v\right]^{1/2}\left[\int_{\mathbb{R}^3}|h(x,v)|^2\mu(1-\mu)\d v\right]^{1/2}
    \end{aligned}
  \end{equation*}
  We conclude that \eqref{2eq8} holds by integrating over $\mathbb{R}_x^3$ and taking $L^2$ and $L^\infty$ norm in $x$ for the last three factors.
\end{proof}

As a corollary, we want to give higher $x$-derivative estimates of nonlinear terms. If we apply the macro-micro decomposition as in \eqref{1eq15} to $f$ and $g$, and plug them into \eqref{2eq5} and \eqref{2eq7}, then by the Sobolev embedding $H^2(\d x)\hookrightarrow L^\infty(\d x)$ we have
\begin{equation}\label{2eq29}
    \begin{aligned}
    \left|\big(Q(\overline{f},\overline{g}),\widetilde{h}\big)\right| \leqslant &C\left[
    \|f\|_{H_x^2L_v^2}\Big(\|\mathbf{P}g\|_{L_{x,v}^2} +\|\nu^{1/2}\{\mathbf{I}-\mathbf{P}\}g\|_{L_{x,v}^2}\Big)\right.\\
    &\left.\qquad+ \|\nu^{1/2}\{\mathbf{I}-\mathbf{P}\}f\|_{H_x^2L_v^2}\|g\|_{L_{x,v}^2}\right] \|\nu^{1/2}\widetilde{h}\|_{L_{x,v}^2},
    \end{aligned}
  \end{equation}
and
\begin{equation}\label{2eq30}
  \begin{aligned}
    \left|\big(T(\overline{f},\overline{g},h),\widetilde{h}\big)\right| \leqslant &C\left[
    \|f\|_{H_x^2L_v^2}\Big(\|\mathbf{P}g\|_{L_{x,v}^2} +\|\nu^{1/2}\{\mathbf{I}-\mathbf{P}\}g\|_{L_{x,v}^2}\Big)\right.\\
    &\left.\qquad+ \|\nu^{1/2}\{\mathbf{I}-\mathbf{P}\}f\|_{H_x^2L_v^2} \|g\|_{L_{x,v}^2}\right]\|h\|_{H_x^2L_v^2} \|\nu^{1/2}\widetilde{h}\|_{L_{x,v}^2},
  \end{aligned}
  \end{equation}
where $(\overline{f},\overline{g})$ is a permutation of $(f,g)$. If in the estimate of $\big(T(\overline{f},\overline{g},h),\widetilde{h}\big)$, we apply the Sobolev embedding $H^2(\d x)\hookrightarrow L^\infty(\d x)$ to $f$ and $g$, we also have
\begin{equation}\label{2eq31}
  \begin{aligned}
    \left|\big(T(\overline{f},\overline{g},h),\widetilde{h}\big)\right| \leqslant &C\Big(\|f\|_{H_x^2L_v^2}\|g\|_{H_x^2L_v^2}+ \|f\|_{H_x^2L_v^2}\|\nu^{1/2}\{\mathbf{I}-\mathbf{P}\}g\|_{H_x^2L_v^2}\\ &\qquad\qquad+\|\nu^{1/2}\{\mathbf{I}-\mathbf{P}\}f\|_{H_x^2L_v^2}\|g\|_{H_x^2L_v^2}\Big) \|h\|_{L_{x,v}^2} \|\nu^{1/2}\widetilde{h}\|_{L_{x,v}^2},
  \end{aligned}
\end{equation}

\section{The existence of global solutions}\label{sec3}
We use the standard strategy to obtain the global existence of solution to \eqref{1eq8}: construct the local solutions for a sequence of iterating approximate equations and take limit to get local solution to \eqref{1eq8}; then obtain a uniform energy estimate; finally, the use of the continuation argument. The uniform energy estimate consists of two parts: one part is to obtain microscopic dissipation rate, the more complicated part is to deal with macroscopic dissipation rate.
\subsection{Construction of local solutions}
In order to prove the existence of nonlinear problem \eqref{1eq8}, we shall firstly study the following linear Cauchy problem:
\begin{equation}\label{3eq1}
\left\{
  \begin{aligned}
  \partial_t g + \frac{1}{\epsilon}v\cdot \nabla_x g +\frac{1}{\epsilon^2}Lg &=\frac{1}{\epsilon}Q(f,f)+T(f,f,f),\\
  g(t,x,v)|_{t=0}&=g_0(x,v),
  \end{aligned}
\right.
\end{equation}
where $f$ is a given function. For \eqref{3eq1}, we have the following result:
\begin{lemma}\label{lem5}
  For $0<\epsilon<1$, assume that
  \begin{equation*}
    g_0\in H^N(\d x;\,L^2(\mu(1-\mu)\d v)),
  \end{equation*}
  and that for some $T>0$, $f$ satisfies
  \begin{equation*}
    \sup_{0\leqslant t \leqslant T}\mathcal{E}_N^2(f)(t)+\int_{0}^{T} \left(\mathcal{D}_N^{mic}\right)^2(f)(t)dt < +\infty.
  \end{equation*}
  Then, the Cauchy problem \eqref{3eq1} admits a unique solution:
  \begin{equation*}
    g(t, x, v)\in L^{\infty}\left([0,T];\,H^N(\d x;\,L^2(\mu(1-\mu)\d v))\right)
  \end{equation*}
\end{lemma}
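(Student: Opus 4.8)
The plan is to treat \eqref{3eq1}, for a prescribed $f$, as a \emph{linear} transport equation for $g$ endowed with a genuine velocity damping. Writing $L=\nu-K$ as in \eqref{1eq9}, the equation reads
\begin{equation*}
  \partial_t g+\tfrac1\epsilon v\cdot\nabla_x g+\tfrac1{\epsilon^{2}}\nu(v)\,g=\tfrac1{\epsilon^{2}}Kg+\tfrac1\epsilon Q(f,f)+T(f,f,f),\qquad g\big|_{t=0}=g_{0}.
\end{equation*}
For $\epsilon$ fixed I would run the standard iteration/contraction scheme: set $g^{0}\equiv g_{0}$ and let $g^{\ell+1}$ solve the equation above with $Kg$ replaced by $Kg^{\ell}$ on the right-hand side. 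Each step is a linear transport--relaxation problem, solved explicitly by the Duhamel formula along the affine characteristics $s\mapsto x-\tfrac1\epsilon v(t-s)$ against the relaxation kernel $e^{-\nu(v)(t-s)/\epsilon^{2}}$. Since $x$-translations are isometries of $H^{N}(\d x;L_v^{2})$ and $K$ is bounded on $L_v^{2}$ (Proposition~\ref{prop1}), the only point to check is that the inhomogeneity $\tfrac1\epsilon Q(f,f)+T(f,f,f)$, which grows like $\nu(v)$ in velocity (cf.\ \eqref{2eq5}, \eqref{2eq7}), is still reproduced in $H^{N}_{x}L_v^{2}$ after convolution with that kernel; this holds because $\sup_{v}e^{-\nu(v)\tau/\epsilon^{2}}\nu(v)^{1/2}\leqslant C\epsilon\,\tau^{-1/2}$, a locally integrable singularity in $\tau=t-s$, so every $g^{\ell+1}$ is well defined in $C([0,T];H^{N}(\d x;L_v^{2}))$.

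The heart of the matter is the $x$-derivative energy estimate. Applying $\partial_x^{\alpha}$ for $|\alpha|\leqslant N$ and pairing with $\partial_x^{\alpha}g^{\ell+1}$ in $L^{2}(\mu(1-\mu)\d v\,\d x)$: the streaming term drops by skew-adjointness; the damping contributes $\tfrac1{\epsilon^{2}}\|\partial_x^{\alpha}g^{\ell+1}\|_{\nu}^{2}\geqslant \tfrac{C_{1}}{\epsilon^{2}}\|\partial_x^{\alpha}g^{\ell+1}\|^{2}$ by \eqref{2eq1}; the term $\tfrac1{\epsilon^{2}}\langle K\partial_x^{\alpha}g^{\ell},\partial_x^{\alpha}g^{\ell+1}\rangle$ is bounded by $\tfrac{C}{\epsilon^{2}}\|\partial_x^{\alpha}g^{\ell}\|\,\|\partial_x^{\alpha}g^{\ell+1}\|$; and the $Q$- and $T$-contributions are controlled, after a Leibniz expansion of $\partial_x^{\alpha}$ and the embedding $H^{2}(\d x)\hookrightarrow L^{\infty}(\d x)$ — this is where $N\geqslant 4$ enters, ensuring the low-order factor always lands in $H^{2}_{x}$ — by \eqref{2eq29}--\eqref{2eq31}. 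Since these bounds carry at most one factor $\epsilon^{-1}$ against $\|\nu^{1/2}\partial_x^{\alpha}g^{\ell+1}\|$, Young's inequality absorbs the $\nu$-weighted factor of $g^{\ell+1}$ into a fraction of the damping, leaving
\begin{equation*}
  \tfrac{\d}{\d t}\mathcal{E}_{N}^{2}(g^{\ell+1})+\tfrac{c}{\epsilon^{2}}\mathcal{E}_{N}^{2}(g^{\ell+1})\leqslant \tfrac{C}{\epsilon^{2}}\,\mathcal{E}_{N}^{2}(g^{\ell})+\Phi(t),
\end{equation*}
with $\Phi\in L^{1}([0,T])$ governed by $\sup_{[0,T]}\mathcal{E}_{N}^{2}(f)$ and $\int_{0}^{T}(\mathcal{D}_{N}^{mic})^{2}(f)$ through the hypotheses on $f$. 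A Gronwall argument — the factor $e^{CT'/\epsilon^{2}}$ being harmless for fixed $\epsilon$ — yields, on a short interval $[0,T']$ whose length depends only on $\epsilon$, an $\ell$-uniform bound for $\sup_{[0,T']}\mathcal{E}_{N}^{2}(g^{\ell+1})$; the same computation applied to $g^{\ell+1}-g^{\ell}$, which solves the analogous equation with zero data and right-hand side only $\tfrac1{\epsilon^{2}}K(g^{\ell}-g^{\ell-1})$ (the $Q,T$ source cancels), gives $\sup_{[0,T']}\mathcal{E}_{N}^{2}(g^{\ell+1}-g^{\ell})\leqslant \tfrac12\sup_{[0,T']}\mathcal{E}_{N}^{2}(g^{\ell}-g^{\ell-1})$ after shrinking $T'$ once more, still independently of the data.

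It then follows that $(g^{\ell})$ converges in $C([0,T'];H^{N}(\d x;L_v^{2}))$ to a solution $g$ of \eqref{3eq1} on $[0,T']$, and uniqueness follows from the same difference estimate. Because $T'$ is independent of the solution, one iterates the construction over consecutive intervals of length $T'$ to cover $[0,T]$, obtaining $g\in L^{\infty}([0,T];H^{N}(\d x;L^{2}(\mu(1-\mu)\d v)))$ (in fact $g\in C([0,T];H^{N}_{x}L_v^{2})$). I expect the only genuine subtlety to be the bookkeeping of the powers of $\epsilon$: one must verify that every occurrence of $Kg$ and of the $Q,T$ sources is, after Young's inequality, dominated by a fixed fraction of the $\tfrac1{\epsilon^{2}}\|\cdot\|_{\nu}^{2}$ dissipation plus a time-integrable remainder controlled purely by the data of $f$ and $g_{0}$ — no uniformity in $\epsilon$ being sought at this stage, that being precisely the content of Theorem~\ref{th1}.
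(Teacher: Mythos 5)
Your proof is correct, but it follows a genuinely different route from the paper's. The paper treats the full operator $\mathcal{T}=\partial_t+\tfrac1\epsilon v\cdot\nabla_x+\tfrac1{\epsilon^2}L$ by duality: it derives the a priori bounds \eqref{3eq3}--\eqref{3eq4} for the adjoint $\mathcal{T}^*h=-\partial_t h-\tfrac1\epsilon v\cdot\nabla_x h+\tfrac1{\epsilon^2}Lh$ using only the coercivity \eqref{2eq3}, defines the functional $G(w)=\big(\tfrac1\epsilon Q(f,f)+T(f,f,f),h\big)+\big(g_0,h(0)\big)$ on the range of $\mathcal{T}^*$, and produces a solution in one shot via Hahn--Banach and Riesz representation (the "method of transposition"). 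You instead exploit Grad's splitting $L=\nu-K$, solve each linear step explicitly by Duhamel along characteristics against the relaxation kernel $e^{-\nu(v)(t-s)/\epsilon^2}$, and close a contraction in $C([0,T'];H^N_xL^2_v)$ by iterating on the compact part $K$. Both arguments are sound. The paper's approach is softer — it never needs the decomposition of $L$, the characteristics, or the velocity-weight bookkeeping for the source (which your kernel bound $\sup_v e^{-\nu\tau/\epsilon^2}\nu^{1/2}\lesssim\epsilon\tau^{-1/2}$ handles correctly, since by Fubini the time-integrated dissipation of each iterate is in fact finite) — but it only delivers a weak solution in $L^2_tH^N_xL^2_v$, and uniqueness is not actually a by-product of the Riesz representation; it requires the separate energy argument for differences that the paper leaves implicit. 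Your construction yields uniqueness and time-continuity for free from the contraction, at the price of an $\epsilon$-dependent time step $T'$, which, as you correctly note, is harmless: $T'$ depends only on $\epsilon$ and the fixed constants (the $Q,T$ source cancels in differences, and the uniform-in-$\ell$ bound needs only $CT'/\epsilon^2\leqslant\tfrac12$), so finitely many iterations cover $[0,T]$, and no uniformity in $\epsilon$ is claimed at this stage. One small caveat for the write-up: the constant in $\tfrac1{\epsilon^2}\|Kg^{\ell}\|\,\|g^{\ell+1}\|$ may exceed the damping constant $C_1$ from \eqref{2eq1}, so the linear Gronwall factor $e^{Ct/\epsilon^2}$ genuinely appears and cannot be absorbed — you acknowledge this, and it is exactly why the local time must shrink with $\epsilon$ in your scheme, whereas the paper's duality bound sidesteps it.
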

\begin{proof}
  We prove the existence of a solution to the Cauchy problem \eqref{3eq1} by the Hahn-Banach theorem. We define the linear operator $\mathcal{T}$ by
  \begin{equation*}
    \mathcal{T}g\equiv \partial_t g + \frac{1}{\epsilon}v\cdot \nabla_x g +\frac{1}{\epsilon^2}Lg.
  \end{equation*}
  Then, we rewrite \eqref{3eq1} into the following form:
  \begin{equation*}
    \mathcal{T}g = \frac{1}{\epsilon}Q(f,f)+T(f,f,f), \quad g(0)=g_0.
  \end{equation*}

  For $h\in C^{\infty}\left([0,T];\,\mathcal{S}(\mathbb{R}_{x,v}^6)\right)$ with $h(T)=0$, we define $\mathcal{T}^*$ through
  \begin{equation*}
    (g,\mathcal{T}^*h)_{L^2\left([0,T];\,H_x^NL_v^2\right)} =(\mathcal{T}g,h)_{L^2\left([0,T];\,H_x^NL_v^2\right)},
  \end{equation*}
  so that $\mathcal{T}^*$ is the adjoint of the operator $\mathcal{T}$ in the Hilbert space $L^2\left([0,T];\,H^N(\d x;\,L^2(\mu(1-\mu)\d v))\right)$.

  We set
  \begin{equation*}
    \mathbb{W}=\left\{w=\mathcal{T}^*h\,\big|\, h\in C^{\infty}\left([0,T];\,\mathcal{S}(\mathbb{R}_{x,v}^6)\right) \text{ with } h(T)=0\right\},
  \end{equation*}
  which is a dense subspace of $L^2\left([0,T];\,H^N(\d x;\,L^2(\mu(1-\mu)\d v))\right)$. We also have that
  \begin{equation*}
    \mathcal{T}^*h=-\partial_t h -\frac{1}{\epsilon}v\cdot \nabla_x h +\frac{1}{\epsilon^2}Lh.
  \end{equation*}

  Then
  \begin{equation*}
    (h,\mathcal{T}^*h)_{H_x^NL_v^2}=-\frac{1}{2}\frac{\d}{\d t}\|h(t)\|_{H_x^NL_v^2}-\frac{1}{\epsilon}(v\cdot \nabla_x h,h)_{H_x^NL_v^2}+ \frac{1}{\epsilon^2}(Lh,h)_{H_x^NL_v^2}.
  \end{equation*}
  Note that the second term above vanishes, and by the estimate \eqref{2eq3}, we have that
  \begin{equation*}
    \int_{t}^{T}(h,\mathcal{T}^*h)_{H_x^NL_v^2}dt\geqslant \frac{1}{2}\|h(t)\|_{H_x^NL_v^2}^2+\frac{\lambda}{\epsilon^2} \int_{t}^{T}\left(\mathcal{D}_N^{mic}\right)^2(h)(s) ds.
  \end{equation*}
  Thus, for all $0<t<T$,
  \begin{equation*}
    \begin{aligned}
    &\|h(t)\|_{H_x^NL_v^2}^2+\frac{2\lambda}{\epsilon^2} \int_{t}^{T}\left(\mathcal{D}_N^{mic}\right)^2(h)(s) ds\\
    \leqslant & 2\|\mathcal{T}^*h\|_{L^2\left([t,T];\,H_x^NL_v^2\right)} \|h\|_{L^2\left([t,T];\,H_x^NL_v^2\right)}.
    \end{aligned}
  \end{equation*}
  This implies that
  \begin{equation}\label{3eq2}
    \begin{aligned}
    &\|h(t)\|_{L^{\infty}\left([0,T];\,H_x^NL_v^2\right)}^2+\frac{2\lambda}{\epsilon^2} \int_{t}^{T}\left(\mathcal{D}_N^{mic}\right)^2(h)(s) ds\\
    \leqslant & 2\|\mathcal{T}^*h\|_{L^2\left([0,T];\,H_x^NL_v^2\right)} \|h\|_{L^2\left([0,T];\,H_x^NL_v^2\right)}\\
    \leqslant & 2\sqrt{T}\|\mathcal{T}^*h\|_{L^2\left([0,T];\,H_x^NL_v^2\right)} \|h\|_{L^\infty\left([0,T];\,H_x^NL_v^2\right)}.
    \end{aligned}
  \end{equation}
  This immediately gives
  \begin{equation}\label{3eq3}
    \|h\|_{L^\infty\left([0,T];\,H_x^NL_v^2\right)}\leqslant 2\sqrt{T}\|\mathcal{T}^*h\|_{L^2\left([0,T];\,H_x^NL_v^2\right)}.
  \end{equation}
  Furthermore, \eqref{3eq2} can be written as follows: let $\mathcal{Y}=\|h\|_{L^\infty\left([0,T];\,H_x^NL_v^2\right)}$,
  \begin{equation*}
  \begin{aligned}
    \frac{\lambda}{\epsilon^2} \int_{t}^{T}\left(\mathcal{D}_N^{mic}\right)^2(h)(s) ds \leqslant&\sqrt{T}\|\mathcal{T}^*h\|_{L^2\left([0,T];\,H_x^NL_v^2\right)} \mathcal{Y}-\frac{1}{2}\mathcal{Y}^2\\
    \leqslant & \frac{T}{2}\|\mathcal{T}^*h\|_{L^2\left([0,T];\,H_x^NL_v^2\right)}^2.
  \end{aligned}
  \end{equation*}
  Thus,
  \begin{equation}\label{3eq4}
    \frac{1}{\epsilon}\left(\int_{t}^{T}\left(\mathcal{D}_N^{mic}\right)^2(h)(s) ds\right)^{1/2}\leqslant  \frac{\sqrt{T}}{\sqrt{2\lambda}}\|\mathcal{T}^*h\|_{L^2\left([0,T];\,H_x^NL_v^2\right)}.
  \end{equation}

  Next, we define a functional $G$ on $\mathbb{W}$ as follows:
  \begin{equation*}
    G(w)=\big(\frac{1}{\epsilon}Q(f,f)+T(f,f,f),h\big)_{L^2\left([0,T];\,H_x^NL_v^2\right)} +\big(g_0,h(0)\big)_{H_x^NL_v^2},
  \end{equation*}
  note that $0<\epsilon<1$, using \eqref{3eq19} and \eqref{3eq26}, we have the estimate
  \begin{equation*}
    \begin{aligned}
    |G(w)|\leqslant& \frac{1}{\epsilon}\left\{\int_{0}^{T}\left(\mathcal{E}_N(f)+\mathcal{E}_N^2(f)\right) \left(\mathcal{E}_N(f)+\mathcal{D}_N^{mic}(f) \right) \mathcal{D}_N^{mic}(h) dt\right\}+\|g_0\|_{H_x^NL_v^2}\|h(0)\|_{H_x^NL_v^2} \\
    \leqslant &\frac{C}{\epsilon} \sup_{t\in[0, T]}\Big(\mathcal{E}_N(f)(t)+\mathcal{E}_N^2(f)(t)\Big)
    \left\{\sqrt{T}\sup_{t\in[0, T]}\mathcal{E}_N(f)(t)+ \left(\int_{0}^{T}\left(\mathcal{D}_N^{mic}\right)^2(f)(t)dt\right)^{1/2}\right\}\\
    &\times\left(\int_{0}^{T}\left(\mathcal{D}_N^{mic}\right)^2(h(t))dt\right)^{1/2} +\|g_0\|_{H_x^NL_v^2} \|h\|_{L^\infty\left([0,T];\,H_x^NL_v^2\right)}.
    \end{aligned}
  \end{equation*}
  Finally \eqref{3eq3} and \eqref{3eq4} imply that
  \begin{equation*}
    \begin{aligned}
    |G(w)|&\leqslant C_{f,g_0}\|\mathcal{T}^*h\|_{L^2\left([0,T];\,H_x^NL_v^2\right)}\\
    &\leqslant C \|w\|_{L^2\left([0,T];\,H_x^NL_v^2\right)},
    \end{aligned}
  \end{equation*}
  where
  \begin{equation*}
    \begin{aligned}
    C_{f,g_0}=2\sqrt{T}\|g_0\|_{H_x^NL_v^2}+C\sqrt{T}\sup_{t\in[0, T]}\Big(\mathcal{E}_N(f)(t)+\mathcal{E}_N^2(f)(t)\Big) &\left\{\sqrt{T}\sup_{t\in[0, T]}\mathcal{E}_N(f)(t)+ \right.\\ &\left.\quad\left(\int_{0}^{T}\left(\mathcal{D}_N^{mic}\right)^2(f)(t)dt\right)^{1/2}\right\}.
    \end{aligned}
  \end{equation*}

  Thus, $G$ is a continuous linear functional on $\big(\mathbb{W};\,\|\cdot\|_{L^2\left([0,T];\,H_x^NL_v^2\right)}\big)$. So by the Hahn-Banach Theorem, $G$ can be extended from $\mathbb{W}$ to $L^2\left([0,T];\,H^N(\d x;\,L^2(\mu(1-\mu)\d v))\right)$. From the Riesz representation theorem, there exists
  \begin{equation*}
    g\in L^2\left([0,T];\,H^N(\d x;\,L^2(\mu(1-\mu)\d v))\right),
  \end{equation*}
  such that for any $w\in \mathbb{W}$,
  \begin{equation*}
    G(w)=(g,w)_{L^2\left([0,T];\,H_x^NL_v^2\right)}.
  \end{equation*}
  Thus fixing $f\in L^\infty\left([0,T];\,H^N(\d x;\,L^2(\mu(1-\mu)\d v))\right)$, by the definitions of the operator $\mathcal{T}^*$ and $G$, we have for any $h\in C^{\infty}\left([0,T];\,\mathcal{S}(\mathbb{R}_{x,v}^6)\right)$ with $h(T)=0$, there exists a unique $g\in L^2\left([0,T];\,H^N(\d x;\,L^2(\mu(1-\mu)\d v))\right)$ such that,
  \begin{equation}\label{3eq5}
    \begin{aligned}
    (\mathcal{T}g,\widehat{h})_{L^2\left([0,T];\,L_x^2L_v^2\right)} =&(\mathcal{T}g,h)_{L^2\left([0,T];\,H_x^NL_v^2\right)}\\
    =&(g,\mathcal{T}^*h)_{L^2\left([0,T];\,H_x^NL_v^2\right)} \\
    =&\big(\frac{1}{\epsilon}Q(f,f)+T(f,f,f),\widehat{h}\big)_{L^2\left([0,T];\,L_x^2L_v^2\right)} +\big(g_0,\widehat{h}(0)\big)_{L_x^2L_v^2}.
    \end{aligned}
  \end{equation}
  where
  \begin{equation*}
    \widehat{h}=\Lambda_x^{2N}h\in C^{\infty}\left([0,T];\,\mathcal{S}(\mathbb{R}_{x,v}^6)\right)\text{ with } \widehat{h}(T)=0,
  \end{equation*}
  and $\Lambda=(1-\Delta_x)^{1/2}$, $\Lambda_x^{2N}$ is used for changing the inner product from
  \begin{equation*}
    H^N(\d x;\,L^2(\mu(1-\mu)\d v)) \text{ to } L^2(\d x;L^2(\mu(1-\mu)\d v).
  \end{equation*}

  Since $\Lambda_x^{2N}$ is an isomorphism on
  \begin{equation*}
    h: h\in C^{\infty}\left([0,T];\,\mathcal{S}(\mathbb{R}_{x,v}^6)\right)
  \end{equation*}
  with $h(T)=0$,
  \begin{equation*}
    g\in L^2\left([0,T];\,H^N(\d x;\,L^2(\mu(1-\mu)\d v))\right)
  \end{equation*}
  is a solution of the Cauchy problem \eqref{3eq1}. Furthermore, using \eqref{3eq5}, we also have
  \begin{equation*}
    \sup_{0\leqslant t \leqslant T}\mathcal{E}_N^2(g)(t)+\frac{1}{\epsilon^2}\int_{0}^{T} \left(\mathcal{D}_N^{mic}\right)^2(g)(t)dt < \widetilde{C}_{f,g_0},
  \end{equation*}
  here
  \begin{equation*}
    \widetilde{C}_{f,g_0}=C\sup_{t\in[0, T]}\Big(\mathcal{E}_N^2(f)(t)+\mathcal{E}_N^4(f)(t)\Big) \left\{T\sup_{t\in[0, T]}\mathcal{E}_N^2(f)(t)+\int_{0}^{T}\left(\mathcal{D}_N^{mic}\right)^2(f)(t)dt\right\} +\mathcal{E}_N^2(g_0)
  \end{equation*}
\end{proof}

Now we concentrate on the local existence of solutions for fully nonlinear problem. Consider the following iterative scheme:
\begin{equation}\label{3eq6}
\left\{
  \begin{aligned}
  \partial_t g^{n+1} + \frac{1}{\epsilon}v\cdot \nabla_x g^{n+1} +\frac{1}{\epsilon^2}Lg^{n+1} &=\frac{1}{\epsilon}Q(g^{n},g^{n})+T(g^{n},g^{n},g^{n}),\\
  g^{n+1}(t,x,v)|_{t=0}&=g_0(x,v),
  \end{aligned}
\right.
\end{equation}
with $g^0=0$.

\begin{lemma}\label{lem6}
  There exist constants $0<\delta_0 \leqslant 1$ and $0<T\leqslant 1$ such that for any $0<\epsilon<1$, if $g_0\in H^N(\d x;\,L^2(\mu(1-\mu)\d v))$ with $\mathcal{E}_N(g_0)\leqslant \delta_0$, then the iteration problem \eqref{3eq6} admits a sequence of solutions $\{g^n\}_{n\geqslant 1}$ satisfying
  \begin{equation}\label{3eq7}
     \sup_{t \in [0,T]}\mathcal{E}_N^2(g^n)(t)+\frac{1}{\epsilon^2}\int_{0}^{T}\left(\mathcal{D}_N^{mic}\right)^2(g^n)(t) dt \leqslant 4\mathcal{E}_N^2(g_0)
   \end{equation}
\end{lemma}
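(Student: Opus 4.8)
The plan is to argue by induction on $n$, treating Lemma \ref{lem5} as a black box. The substantive analysis — vanishing of the transport term after integrating in $x$ and $v$, the coercivity \eqref{2eq3} of $L$, and the control of the nonlinear terms via the estimates of Lemma \ref{lem1} — is already packaged into the explicit a priori constant $\widetilde C_{f,g_0}$ produced in the proof of Lemma \ref{lem5}. What is left for Lemma \ref{lem6} is to feed the solution of the previous iterate back into Lemma \ref{lem5} and check, by bookkeeping of constants, that the bound \eqref{3eq7} is reproduced. The one conceptual point worth isolating — already encoded in that a priori estimate — is that the factor $\tfrac1\epsilon$ multiplying $Q$ does not spoil uniformity in $\epsilon$: since $Q(f,g)$ and $T(f,g,h)$ are orthogonal to $Null(L)$ in $L^2(\mu(1-\mu)\d v)$ by the conservation laws \eqref{1eq5}, only $\{\mathbf I-\mathbf P\}g^{n+1}$ is tested against them, and the resulting contribution gets absorbed into the coercive term $\tfrac{\lambda}{\epsilon^2}(\mathcal D_N^{mic})^2(g^{n+1})$, leaving behind only a power of $\mathcal E_N(g^n)$.

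I would fix $T=1$ (any fixed value in $(0,1]$ works) and let $\delta_0\in(0,1]$ be small, to be chosen at the end. For the base case $n=1$: the right-hand side of \eqref{3eq6} vanishes because $g^0=0$, so Lemma \ref{lem5} applied with $f\equiv0$ gives $g^1\in L^\infty([0,T];H^N(\d x;L^2(\mu(1-\mu)\d v)))$, and since $\mathcal E_N(f)\equiv\mathcal D_N^{mic}(f)\equiv0$ the constant reduces to $\widetilde C_{0,g_0}=\mathcal E_N^2(g_0)$, whence $\sup_{[0,T]}\mathcal E_N^2(g^1)+\tfrac1{\epsilon^2}\int_0^T(\mathcal D_N^{mic})^2(g^1)\,\d t\le\mathcal E_N^2(g_0)\le4\mathcal E_N^2(g_0)$, i.e. \eqref{3eq7} for $n=1$. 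For the inductive step, suppose $g^n$ has been constructed and obeys \eqref{3eq7}. Then $\sup_{[0,T]}\mathcal E_N^2(g^n)\le4\mathcal E_N^2(g_0)<\infty$ and $\int_0^T(\mathcal D_N^{mic})^2(g^n)\,\d t\le4\epsilon^2\mathcal E_N^2(g_0)<\infty$, so $g^n$ meets the hypothesis imposed on $f$ in Lemma \ref{lem5}; that lemma then yields a unique $g^{n+1}\in L^\infty([0,T];H^N(\d x;L^2(\mu(1-\mu)\d v)))$ solving \eqref{3eq6}, together with
\[
\sup_{[0,T]}\mathcal E_N^2(g^{n+1})+\frac1{\epsilon^2}\int_0^T(\mathcal D_N^{mic})^2(g^{n+1})\,\d t<\widetilde C_{g^n,g_0}.
\]

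It then remains to bound $\widetilde C_{g^n,g_0}$. Inserting \eqref{3eq7} for $g^n$ into the expression for $\widetilde C_{f,g_0}$ and using $\mathcal E_N(g_0)\le\delta_0\le1$, $T\le1$, $0<\epsilon<1$,
\[
\widetilde C_{g^n,g_0}\le C\big(4\delta_0^2+16\delta_0^4\big)\big(4T\mathcal E_N^2(g_0)+4\epsilon^2\mathcal E_N^2(g_0)\big)+\mathcal E_N^2(g_0)\le\big(1+C'\delta_0^2\big)\mathcal E_N^2(g_0)
\]
for an absolute constant $C'$. Choosing $\delta_0\in(0,1]$ so that $C'\delta_0^2\le3$ gives $\widetilde C_{g^n,g_0}\le4\mathcal E_N^2(g_0)$, so $g^{n+1}$ satisfies \eqref{3eq7} and the induction closes. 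I expect the only delicate point to be this last one: one must notice that it is the smallness of the data $\delta_0$ — not the smallness of $T$ — that tames the terms $T\sup\mathcal E_N^2(g^n)$ and $\int_0^T(\mathcal D_N^{mic})^2(g^n)\,\d t$ carried by $\widetilde C_{g^n,g_0}$, the latter being admissible precisely because the $\tfrac1{\epsilon^2}$ stands with the dissipation of $g^{n+1}$ and $0<\epsilon<1$.
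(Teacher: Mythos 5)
Your proposal is correct and follows essentially the same route as the paper: induction on $n$, existence of $g^{n+1}$ from Lemma \ref{lem5}, and closure of the bound \eqref{3eq7} by the smallness of $\delta_0$ (with $T\leqslant 1$ and $\epsilon<1$ only used to drop harmless factors), exactly as in the paper's choice $1+16C^2\delta_0^2(1+4\delta_0^2)\leqslant 4$. The only cosmetic difference is that you quote the a priori constant $\widetilde C_{f,g_0}$ from the end of the proof of Lemma \ref{lem5} instead of re-deriving the differential energy inequality for $g^{n+1}$ via \eqref{3eq19} and \eqref{3eq26} as the paper does; the underlying estimates and the absorption of the $\tfrac{1}{\epsilon}$-singular term into $\tfrac{\lambda}{\epsilon^2}(\mathcal D_N^{mic})^2(g^{n+1})$ are identical.
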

\begin{proof}
  It is enough to prove \eqref{3eq7} by induction, since for the linear Cauchy problem \eqref{3eq6}, given $g^n$ satisfying \eqref{3eq7}, the existence of $g^{n+1}$ is assured by Lemma \ref{lem5}. Applying $\partial_x^\alpha$ to \eqref{3eq6}, then taking inner product with $\partial_x^\alpha g^{n+1}$ in $L^2(\mu(1-\mu)\d v \d x)$, using \eqref{2eq3}, \eqref{3eq19} and \eqref{2eq24}, we get
  \begin{equation*}
    \begin{aligned}
    &\frac{1}{2}\frac{\d}{\d t}\mathcal{E}_N^2(g^{n+1})+\frac{\lambda}{\epsilon^2} \left(\mathcal{D}_N^{mic}\right)^2(g^{n+1})\\
    \leqslant & \frac{C}{\epsilon}\left(\mathcal{E}_N(g^{n})+\mathcal{E}_N^2(g^{n})\right) \left\{\mathcal{D}_N^{mac}(g^{n})+\mathcal{D}_N^{mic}(g^{n})\right\} \mathcal{D}_N^{mic}(g^{n+1})\\
    \leqslant & \frac{C^2\eta}{2\epsilon^2}\left(\mathcal{D}_N^{mic}\right)^2(g^{n+1}) +\frac{1}{2\eta}\left(\mathcal{E}_N^2(g^{n})+\mathcal{E}_N^4(g^{n})\right) \left(\left(\mathcal{D}_N^{mac}\right)^2(g^{n})+\left(\mathcal{D}_N^{mic}\right)^2(g^{n})\right),
    \end{aligned}
  \end{equation*}
  in the last step we have used Cauchy-Schwarz inequality with $\eta>0$. Thus let $C^2\eta=\lambda$, we get
  \begin{equation*}
    \begin{aligned}
    &\frac{\d}{\d t}\mathcal{E}_N^2(g^{n+1})+\frac{\lambda}{\epsilon^2}\left(\mathcal{D}_N^{mic}\right)^2(g^{n+1})\\
    \leqslant &\frac{C^2}{\lambda}\left(\mathcal{E}_N^2(g^{n})+\mathcal{E}_N^4(g^{n})\right) \left(\left(\mathcal{D}_N^{mac}\right)^2(g^{n})+\left(\mathcal{D}_N^{mic}\right)^2(g^{n})\right).
    \end{aligned}
  \end{equation*}
  Note that we have \eqref{3eq17}, integrating on $[0, T]$ with $T \leqslant 1$,
  \begin{equation*}
    \begin{aligned}
    &\sup_{t \in [0,T]}\mathcal{E}_N^2(g^{n+1})(t) +\frac{1}{\epsilon^2}\int_{0}^{T}\left(\mathcal{D}_N^{mic}\right)^2(g^{n+1})(t)dt\\
    \leqslant &\mathcal{E}_N^2(g_0)
    +C^2\sup_{t \in [0,T]}\left(\mathcal{E}_N^2(g^{n})+\mathcal{E}_N^4(g^{n})\right) \cdot \left\{
    T\sup_{t \in [0,T]}\mathcal{E}_N^2(g^{n}(t))+\int_{0}^{T}\left(\mathcal{D}_N^{mic}\right)^2(g^{n}(t))dt\right\},
    \end{aligned}
  \end{equation*}
  we chose $\delta_0>0$ such that
  \begin{equation*}
    1+16C^2\delta_0^2\left(1+4\delta_0^2\right)\leqslant 4,
  \end{equation*}
  to complete the proof.
\end{proof}

Finally, we prove the convergence of $\{g^n\}$ using the uniform estimate \eqref{3eq7}.

\begin{theorem}[Local existence]\label{lex}
   There exist constants $0<\delta_0 \leqslant 1$ and $0<T\leqslant 1$ such that for any $0<\epsilon< 1$, if $g_{\epsilon,0}\in H^N(\d x;\,L^2(\mu(1-\mu)\d v))$ with $\mathcal{E}_N(g_{\epsilon,0})\leqslant \delta_0,$ then there is a unique solution
   \begin{equation*}
     g_{\epsilon}(t, x, v)\in L^{\infty}\left([0,T];\,H^N(\d x;\,L^2(\mu(1-\mu)\d v))\right)
   \end{equation*}
   to the Boltzmann-Fermi-Dirac equation \eqref{3eq8} such that
   \begin{equation}\label{3eq8}
     \sup_{t \in [0,T]}\mathcal{E}_N^2(g_{\epsilon})(t)+\frac{1}{\epsilon^2}\int_{0}^{T}
     \left(\mathcal{D}_N^{mic}\right)^2(g_{\epsilon})(t)dt \leqslant 4\mathcal{E}_N^2(g_{\epsilon,0}).
   \end{equation}
   Moreover, $\mathcal{E}_N(g_{\epsilon})(t)$ : $\left[0, T\right] \rightarrow \mathbb{R}$ is continuous. If $0\leqslant F(0, x, v)=\mu+\mu(1-\mu)g_0(x,v)\leqslant 1$, then
   \begin{equation*}
     0\leqslant F(t, x, v)=\mu+\mu(1-\mu)g(t,x,v) \leqslant 1.
   \end{equation*}
\end{theorem}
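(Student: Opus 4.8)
The plan is to turn the uniform bound \eqref{3eq7} of Lemma~\ref{lem6} into convergence of the iterates $\{g^n\}$ to a genuine solution, and then to verify the remaining assertions. First I would show that $\{g^n\}$ is a Cauchy sequence in $L^{\infty}([0,T];H^N(\d x;L^2(\mu(1-\mu)\d v)))$. The difference $w^n:=g^{n+1}-g^n$ satisfies, with zero initial data,
\begin{equation*}
  \partial_t w^n + \tfrac1\epsilon v\cdot\nabla_x w^n + \tfrac1{\epsilon^2}Lw^n = \tfrac1\epsilon\big[Q(g^n,g^n)-Q(g^{n-1},g^{n-1})\big]+\big[T(g^n,g^n,g^n)-T(g^{n-1},g^{n-1},g^{n-1})\big].
\end{equation*}
Writing the right-hand side multilinearly, $Q(g^n,g^n)-Q(g^{n-1},g^{n-1})=Q(w^{n-1},g^n)+Q(g^{n-1},w^{n-1})$ and similarly for $T$, applying $\partial_x^\alpha$ for $|\alpha|\leqslant N$, pairing with $\partial_x^\alpha w^n$, and using the coercivity \eqref{2eq3}, the fact that $Q$ and $T$ take values in $Null(L)^{\bot}$ (so that only $\{\mathbf{I}-\mathbf{P}\}w^n$ is tested), together with the higher-derivative nonlinear estimates \eqref{2eq29}--\eqref{2eq31} exactly as in Lemma~\ref{lem6}, I expect, after a Young inequality absorbing the $\tfrac1{\epsilon^2}(\mathcal{D}_N^{mic})^2(w^n)$ terms, an inequality of the form
\begin{equation*}
  \sup_{[0,T]}\mathcal{E}_N^2(w^n)+\tfrac{c}{\epsilon^2}\int_0^T(\mathcal{D}_N^{mic})^2(w^n)\,\d t\leqslant C\Big(\sup_{[0,T]}\mathcal{E}_N^2(g^n)+\sup_{[0,T]}\mathcal{E}_N^2(g^{n-1})\Big)\Big(\sup_{[0,T]}\mathcal{E}_N^2(w^{n-1})+\tfrac{1}{\epsilon^2}\int_0^T(\mathcal{D}_N^{mic})^2(w^{n-1})\,\d t\Big).
\end{equation*}
By \eqref{3eq7} the first bracket on the right is $\leqslant C\delta_0^2$; choosing $\delta_0$ so small that $C\delta_0^2\leqslant\tfrac12$ makes the left-hand quantity contract geometrically in $n$, hence $\{g^n\}$ converges strongly, both in $L^{\infty}([0,T];H_x^NL_v^2)$ and in the associated dissipation norm, to some $g_\epsilon$.

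Next I would identify $g_\epsilon$ as the solution. Since $Q(g^n,g^n)-Q(g_\epsilon,g_\epsilon)=Q(g^n-g_\epsilon,g^n)+Q(g_\epsilon,g^n-g_\epsilon)$ and likewise for $T$, the estimates \eqref{2eq29}--\eqref{2eq31} and the strong convergence give $\tfrac1\epsilon Q(g^n,g^n)+T(g^n,g^n,g^n)\to\tfrac1\epsilon Q(g_\epsilon,g_\epsilon)+T(g_\epsilon,g_\epsilon,g_\epsilon)$ in the relevant $\nu$-weighted norm, so passing to the limit in the weak formulation of \eqref{3eq6} shows $g_\epsilon$ solves \eqref{1eq8}, and the regularity $g_\epsilon\in L^{\infty}([0,T];H_x^NL_v^2)$ makes it a classical solution; uniqueness follows by applying the same difference estimate to two solutions with the same datum. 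The energy estimate \eqref{3eq8} is then obtained either by passing to the limit in \eqref{3eq7} or by repeating the computation of Lemma~\ref{lem6} directly for $g_\epsilon$ and integrating on $[0,T]$ with $T\leqslant1$. Continuity of $t\mapsto\mathcal{E}_N(g_\epsilon)(t)$ follows in the standard way: \eqref{1eq8} yields weak-in-time continuity with values in $H_x^NL_v^2$, while the energy identity (valid also backward in time, up to the sign of the dissipation) forces $\limsup$ and $\liminf$ of $\mathcal{E}_N^2(g_\epsilon)(\cdot)$ to agree, upgrading weak to strong continuity.

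For the pointwise bounds $0\leqslant F_\epsilon=\mu+\mu(1-\mu)g_\epsilon\leqslant1$ I would return to \eqref{1eq7}. Decompose the collision operator into \emph{gain} and \emph{loss}, $C(F)=\mathcal{Q}^{+}(F)-F\,\mathcal{Q}^{-}(F)$ with
\begin{equation*}
  \mathcal{Q}^{+}(F)=\iint b\,F'F_1'(1-F)(1-F_1)\,\d\omega\,\d v_1,\qquad \mathcal{Q}^{-}(F)=\iint b\,F_1(1-F')(1-F_1')\,\d\omega\,\d v_1,
\end{equation*}
both nonnegative whenever $0\leqslant F\leqslant1$, and observe that the Pauli exclusion factors force $C(1-F)=-C(F)$, so $1-F_\epsilon$ solves the same transport--collision equation as $F_\epsilon$. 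Reading \eqref{1eq7} along the characteristics and using the Duhamel formula, $F_\epsilon$ (respectively $1-F_\epsilon$) equals its initial value times a nonnegative exponential loss factor plus a time integral of the nonnegative gain term, \emph{as long as} $0\leqslant F_\epsilon\leqslant1$ on the interval at hand; a continuation argument — the subset of $[0,T]$ where $0\leqslant F_\epsilon\leqslant1$ holds is nonempty, closed, and open — then propagates the bound from $t=0$.

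The step I expect to be the main obstacle is keeping the factor $\tfrac1\epsilon$ in front of $Q$ under control uniformly in $\epsilon$ in the difference estimate: one must ensure, exactly as in Lemma~\ref{lem6}, that every $\tfrac1\epsilon$-term is paired only with the microscopic part $\{\mathbf{I}-\mathbf{P}\}w^n$, so that it is absorbed by $\tfrac1{\epsilon^2}(\mathcal{D}_N^{mic})^2(w^n)$ through Young's inequality — otherwise the contraction constant degenerates as $\epsilon\to0$. The second delicate point is the bound $0\leqslant F_\epsilon\leqslant1$, which rests entirely on the gain/loss structure and the $F\mapsto1-F$ symmetry, since the perturbative framework controls $g_\epsilon$ only in $H_x^NL_v^2$ and not pointwise in $v$ beyond what the Sobolev embeddings provide.
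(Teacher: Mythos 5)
Your proposal is correct and follows the paper's skeleton (iteration scheme \eqref{3eq6}, uniform bound \eqref{3eq7}, difference estimate using the multilinear expansion of $Q(g^n,g^n)-Q(g^{n-1},g^{n-1})$ and of the $T$-difference, orthogonality of the nonlinearity to $Null(L)$ so that only $\{\mathbf{I}-\mathbf{P}\}w^n$ is tested, absorption of the $\tfrac1\epsilon$ factors into $\tfrac{1}{\epsilon^2}(\mathcal{D}_N^{mic})^2(w^n)$, uniqueness by the same computation), but it diverges from the paper at one structural point: you contract in the full $L^\infty([0,T];H_x^NL_v^2)$ topology, whereas the paper proves the Cauchy property only in the low norm $L^\infty([0,T];L^2(\mu(1-\mu)\,\d v\,\d x))$ (plus its dissipation), and then upgrades by interpolating against the uniform $H^N$ bound \eqref{3eq7}, obtaining strong convergence in $H^{N-\eta}$ for every $\eta>0$ and recovering the limit in $H^N$ together with the estimate \eqref{3eq8} by weak lower semicontinuity. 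Your top-order contraction does appear to close here, because $N\geqslant 4$ guarantees that in every distribution of $\partial_x^\alpha$ over the factors at least one factor carries at most $N-2$ derivatives and can be placed in $H_x^2L_v^2$ via \eqref{2eq29}--\eqref{2eq31}; the paper's low-norm route simply sidesteps this bookkeeping at the price of an interpolation step. Your schematic contraction inequality should, as in the paper, carry the factor $T\sup\mathcal{E}_N^2+\int(\mathcal{D}_N^{mic})^2$ rather than a bare $\sup\mathcal{E}_N^2$ in front of $\sup\mathcal{E}_0^2(w^{n-1})$, but either way the smallness comes from $\delta_0$ and the conclusion is unaffected. Finally, you supply arguments for two assertions the paper's written proof does not address at all: the continuity of $t\mapsto\mathcal{E}_N(g_\epsilon)(t)$, and the propagation of $0\leqslant F_\epsilon\leqslant 1$ via the gain/loss splitting $C(F)=\mathcal{Q}^+(F)-F\mathcal{Q}^-(F)$, the symmetry $C(1-F)=-C(F)$, Duhamel along characteristics, and a continuation argument; this is the standard and correct mechanism (one may prefer to also pull the factor $(1-F)$ out of the gain term so that the full coefficient of $F$ sits in the exponential, and one should really run the argument at the level of the approximating sequence since $g_\epsilon$ is only $L^2$ in $v$), and it strengthens rather than weakens the write-up.
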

\begin{proof}
  We prove that $\{g^n\}$ is a Cauchy sequence in $L^{\infty}\left([0,T];\,L^2(\mu(1-\mu)\d v\d x)\right)$. Setting $w^n=g^{n+1}-g^{n}$, we deduce from \eqref{3eq6}
  \begin{equation*}
    \left\{
  \begin{aligned}
  &\partial_t w^n + \frac{1}{\epsilon}v\cdot \nabla_x w^n +\frac{1}{\epsilon^2}Lw^n =\left(\frac{1}{\epsilon}Q(g^n,g^n)+T(g^n,g^n,g^n)\right)- \left(\frac{1}{\epsilon}Q(g^{n-1},g^{n-1})+T(g^{n-1},g^{n-1},g^{n-1})\right),\\
  &w^n|_{t=0}=0.
  \end{aligned}
  \right.
  \end{equation*}
  Since for any $h\in L^2(\mu(1-\mu))$,
  \begin{align*}
    &\left\langle\left(\frac{1}{\epsilon}Q(g^n,g^n)+T(g^n,g^n,g^n)\right)- \left(\frac{1}{\epsilon}Q(g^{n-1},g^{n-1})+T(g^{n-1},g^{n-1},g^{n-1})\right), \mathbf{P}h\right\rangle=0  \\
    &\left(\frac{1}{\epsilon}Q(g^n,g^n)+T(g^n,g^n,g^n)\right)- \left(\frac{1}{\epsilon}Q(g^{n-1},g^{n-1})+T(g^{n-1},g^{n-1},g^{n-1})\right)\\
    =&\frac{1}{\epsilon^3}\left[Q(\epsilon w^{n-1},\epsilon g^n)+T(\epsilon w^{n-1},\epsilon g^n,\epsilon g^n)\right]\\ &+ \frac{1}{\epsilon^3}\left[Q(\epsilon g^{n-1},\epsilon w^{n-1})+T(\epsilon g^{n-1},\epsilon w^{n-1},\epsilon g^{n})\right]+T(g^{n-1},g^{n-1},w^{n-1}),
  \end{align*}
  then we have
  \begin{equation*}
    \begin{aligned}
    &\left(\left(\frac{1}{\epsilon}Q(g^n,g^n)+T(g^n,g^n,g^n)\right)- \left(\frac{1}{\epsilon}Q(g^{n-1},g^{n-1})+T(g^{n-1},g^{n-1},g^{n-1})\right), w^n\right)\\
    =&\frac{1}{\epsilon^3}\Big(Q(\epsilon w^{n-1},\epsilon g^n)+T(\epsilon w^{n-1},\epsilon g^n,\epsilon g^n), \{\,\mathbf{I}-\mathbf{P}\,\}w^n\Big)\\
    &+\frac{1}{\epsilon^3}\left(Q(\epsilon g^{n-1},\epsilon w^{n-1})+T(\epsilon g^{n-1},\epsilon w^{n-1},\epsilon g^{n}), \{\,\mathbf{I}-\mathbf{P}\,\}w^n\right)\\
    &+\Big(T(g^{n-1},g^{n-1},w^{n-1}), \{\,\mathbf{I}-\mathbf{P}\,\}w^n\Big).
    \end{aligned}
  \end{equation*}
  We need to estimate the three terms on the right hand side above. Note that $0<\epsilon<1$, by \eqref{2eq29}, \eqref{2eq30} and the Cauchy-Schwarz inequality with $\eta>0$, the first term is bounded by
  \begin{equation}\label{3eq9}
    \begin{aligned}
    &\frac{C}{\epsilon}\left\{\mathcal{E}_0(w^{n-1})\mathcal{D}_N^{mic}(g^n)+ \mathcal{E}_N(g^n)\left(\mathcal{E}_0(w^{n-1})+\mathcal{D}_0^{mic}(w^{n-1})\right) \right\}\left(1+\epsilon\mathcal{E}_N(g^n)\right)\mathcal{D}_0^{mic}(w^n)\\
    &\leqslant  C_{\eta}\left\{\mathcal{E}_0^2(w^{n-1})\left(\mathcal{D}_N^{mic}\right)^2(g^n)+ \mathcal{E}_N^2(g^n)\left(\mathcal{E}_0^2(w^{n-1})+\left(\mathcal{D}_0^{mic}\right)^2(w^{n-1})\right) \right\}\left(1+\mathcal{E}_N^2(g^n)\right)\\
    &+\frac{\eta}{\epsilon^2}\left(\mathcal{D}_0^{mic}\right)^2(w^n),
    \end{aligned}
  \end{equation}
  the second term is bounded by
  \begin{equation}\label{3eq10}
    \begin{aligned}
    &\frac{C}{\epsilon}\left\{\mathcal{E}_0(w^{n-1})\mathcal{D}_N^{mic}(g^{n-1})+ \mathcal{E}_N(g^{n-1})\left(\mathcal{E}_0(w^{n-1})+\mathcal{D}_0^{mic}(w^{n-1})\right) \right\}\left(1+\epsilon\mathcal{E}_N(g^n)\right)\mathcal{D}_0^{mic}(w^n)\\
    &\leqslant C_{\eta}\left\{\mathcal{E}_0^2(w^{n-1})\left(\mathcal{D}_N^{mic}\right)^2(g^{n-1})+ \mathcal{E}_N^2(g^{n-1})\left(\mathcal{E}_0^2(w^{n-1})+\left(\mathcal{D}_0^{mic}\right)^2(w^{n-1})\right)\right\} \left(1+\mathcal{E}_N^2(g^n)\right)\\
    &+\frac{\eta}{\epsilon^2}\left(\mathcal{D}_0^{mic}\right)^2(w^n),
    \end{aligned}
  \end{equation}
  and the third term of is bounded by
  \begin{equation}\label{3eq11}
    \begin{aligned}
    &C\left\{\mathcal{E}_N(g^{n-1})+\mathcal{D}_N^{mic}(g^{n-1})\right\} \mathcal{E}_N(g^{n-1})\mathcal{E}_0(w^{n-1})\mathcal{D}_0^{mic}(w^n)\\
    &\leqslant C_{\eta}\left\{\mathcal{E}_N^2(g^{n-1})+\left(\mathcal{D}_N^{mic}\right)^2(g^{n-1})\right\} \mathcal{E}_N^2(g^{n-1})\mathcal{E}_0^2(w^{n-1})+\eta\left(\mathcal{D}_0^{mic}\right)^2(w^n).
    \end{aligned}
  \end{equation}
  Thus we have the estimate
  \begin{equation}\label{3eq12}
    \begin{aligned}
    &\left|\Big(\left(\frac{1}{\epsilon}Q(g^n,g^n)+T(g^n,g^n,g^n)\right)- \left(\frac{1}{\epsilon}Q(g^{n-1},g^{n-1})+T(g^{n-1},g^{n-1},g^{n-1})\right), w^n\Big)_{L_{x,v}^2}\right|\\
    \leqslant &C_{\eta}\bigg\{\mathcal{E}_0^2(w^{n-1})\left(1+\mathcal{E}_N^2(g^n)\right) \left[\mathcal{E}_N^2(g^n)+\left(\mathcal{D}_N^{mic}\right)^2(g^n)\right]+\\
    &\qquad\mathcal{E}_0^2(w^{n-1})\left(1+\mathcal{E}_N^2(g^n)+\mathcal{E}_N^2(g^{n-1})\right) \left[\mathcal{E}_N^2(g^{n-1})+\left(\mathcal{D}_N^{mic}\right)^2(g^{n-1})\right]+\\
    &\qquad\left(\mathcal{D}_N^{mic}\right)^2(w^{n-1})\left(1+\mathcal{E}_N^2(g^n)\right) \left[\mathcal{E}_N^2(g^n)+\mathcal{E}_N^2(g^{n-1})\right]\bigg\}+\frac{3\eta}{\epsilon^2}\left(\mathcal{D}_0^{mic}\right)^2(w^n).
    \end{aligned}
  \end{equation}
  Let $3\eta=\frac{\lambda}{2}$, we get
  \begin{equation*}
    \begin{aligned}
    &\frac{1}{2}\frac{\d}{\d t}\|w^n\| +\frac{\lambda}{2\epsilon^2}\left(\mathcal{D}_N^{mic}\right)^2(w^n)\\
    \leqslant &C_{\eta}\bigg\{\mathcal{E}_0^2(w^{n-1})\left(1+\mathcal{E}_N^2(g^n)\right) \left[\mathcal{E}_N^2(g^n)+\left(\mathcal{D}_N^{mic}\right)^2(g^n)\right]+\\
    &\qquad\mathcal{E}_0^2(w^{n-1})\left(1+\mathcal{E}_N^2(g^n)+\mathcal{E}_N^2(g^{n-1})\right) \left[\mathcal{E}_N^2(g^{n-1})+\left(\mathcal{D}_N^{mic}\right)^2(g^{n-1})\right]+\\
    &\qquad\left(\mathcal{D}_N^{mic}\right)^2(w^{n-1})\left(1+\mathcal{E}_N^2(g^n)\right) \left[\mathcal{E}_N^2(g^n)+\mathcal{E}_N^2(g^{n-1})\right]\bigg\},
    \end{aligned}
  \end{equation*}
  it follows that
  \begin{equation*}
    \begin{aligned}
    &\|w^n\|_{L^\infty\left([0,T];\,{L_x^2L_v^2}\right)}^2+ \frac{1}{\epsilon^2}\int_{0}^{T}\left(\mathcal{D}_N^{mic}\right)^2(w^n)dt\\
    \leqslant & C \sup_{t\in [0,T]}\mathcal{E}_0^2(w^{n-1})\sup_{t\in [0,T]}\left(1+\mathcal{E}_N^2(g^n)+\mathcal{E}_N^2(g^{n-1})\right)\times\\
    &\left[T\sup_{t\in [0,T]}\mathcal{E}_N^2(g^n)+\int_{0}^{T}\left(\mathcal{D}_N^{mic}\right)^2(g^n)dt        +T\sup_{t\in [0,T]}\mathcal{E}_N^2(g^{n-1})+\int_{0}^{T}\left(\mathcal{D}_N^{mic}\right)^2(g^{n-1})dt\right]\\
    &+C\sup_{t\in [0,T]}\left(1+\mathcal{E}_N^2(g^n)\right)\sup_{t\in [0,T]} \left[\mathcal{E}_N^2(g^n)+\mathcal{E}_N^2(g^{n-1})\right]
    \int_{0}^{T}\left(\mathcal{D}_N^{mic}\right)^2(w^{n-1})dt.
    \end{aligned}
  \end{equation*}

  By using \eqref{3eq7} with $0<\delta_0<1$  small enough we get that for any $0<\epsilon <1$,
  \begin{equation*}
  \begin{aligned}
    &\|w^n\|_{L^\infty\left([0,T];\,{L_{x,v}^2}\right)}^2 +\frac{1}{\epsilon^2}\int_{0}^{T}\left(\mathcal{D}_N^{mic}\right)^2(w^n)dt \\
    \leqslant &\frac{1}{2}\left\{\|w^{n-1}\|_{L^\infty\left([0,T];\,{L_{x,v}^2}\right)}^2 +\frac{1}{\epsilon^2}\int_{0}^{T}\left(\mathcal{D}_N^{mic}\right)^2(w^{n-1})dt\right\}.
  \end{aligned}
  \end{equation*}
  Thus we have proved that $\{g^n\}$ is a Cauchy sequence in $L^\infty\left([0,T];\,L^2(\mu(1-\mu)\d v\d x)\right)$.

  The interpolation combining with the estimate \eqref{3eq7} implies that $\{g^n\}$ is a Cauchy sequence in $L^\infty\left([0,T];\,H^{N-\eta}(\d x;\,L^2(\mu(1-\mu)\d v))\right)$ for any $\eta> 0$ and the limit is in the functional space $L^\infty\left([0,T];\,H^N(\d x;\,L^2(\mu(1-\mu)\d v))\right)$. Finally the estimate \eqref{3eq8} follows from weak lower semicontinuity.

  Finally we prove the uniqueness of the local solutions. Let $g_1$ and $g_2$ be two local solutions of \eqref{3eq8}. Then
  \begin{equation*}
    g=g_1-g_2\in L^\infty\left([0,T];\,H^N(\d x;\,L^2(\mu(1-\mu)\d v))\right)
  \end{equation*}
  and it satisfies
  \begin{equation*}
    \left\{
    \begin{aligned}
        \partial_t g + \frac{1}{\epsilon}v\cdot \nabla g +\frac{1}{\epsilon^2}Lg &=\frac{1}{\epsilon}Q(g,g_1)+T(g,g_1,g)+\frac{1}{\epsilon}Q(g_2,g)+T(g_2,g,g)+T(g_2,g_2,g),\\
        g(t,x,v)|_{t=0}&=0,
    \end{aligned}
  \right.
  \end{equation*}
  a similar procedure proves that $g\equiv0$, which implies the uniqueness of the solutions.
\end{proof}

In the rest subsections, we devote ourselves to the global energy estimate. The global existence of Cauchy problem \eqref{3eq8} is obtained by a standard continuation argument of local solutions.

\subsection{Microscopic energy estimate}
Firstly, we study the estimate on the microscopic part $\{\mathbf{I}-\mathbf{P}\}g$ in the function space $H^{N}\left(\d x ; L^{2}\left(\mu(1-\mu)\d v\right)\right)$. For notational simplification, we drop the sub-index $\varepsilon$ of the local solution $g_\varepsilon$ to the Cauchy problem \eqref{3eq8}, and also drop $g$ in the notations $ \mathcal{E}_{N}, \mathcal{D}_N^{mac}, \mathcal{D}_N^{mic}$. Actually, we shall establish
\begin{lemma}\label{lem7}
  Let $g\in L^\infty\left([0,T];\,H^N(\d x;\,L^2(\mu(1-\mu)\d v))\right)$ be a solution of the equation \eqref{3eq8} constructed in Theorem \ref{lex}. Then, there exists a constant $C>0$ independent of $\epsilon$ such that the following estimate holds:
  \begin{equation}\label{3eq13}
    \frac{\d}{\d t}\mathcal{E}_N^2+\frac{1}{\epsilon^2}\left(\mathcal{D}_N^{mic}\right)^2 \leqslant C \left\{\frac{1}{\epsilon}\left(\mathcal{E}_N+\mathcal{E}_N^2\right)\left(\mathcal{D}_N^{mic}\right)^2 +\left(\mathcal{E}_N^2+\mathcal{E}_N^4\right)\left(\mathcal{D}_N^{mac}\right)^2\right\}.
  \end{equation}
\end{lemma}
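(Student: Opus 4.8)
The plan is to run a direct energy estimate in $H^N(\d x; L^2(\mu(1-\mu)\d v))$ on the equation \eqref{1eq8} satisfied by $g$. For each multi-index $|\alpha|\le N$ I apply $\partial_x^\alpha$; since $L$, $Q$ and $T$ act only in the velocity variable, $\partial_x^\alpha$ commutes with $L$ and falls on the arguments of $Q$ and $T$ by the Leibniz rule. Taking the $L^2(\mu(1-\mu)\d v\,\d x)$ inner product with $\partial_x^\alpha g$ and summing over $\alpha$, the streaming term $\frac1\epsilon(v\cdot\nabla_x\partial_x^\alpha g,\partial_x^\alpha g)$ vanishes after integration by parts in $x$, and the coercivity estimate \eqref{2eq3} yields the good term $\frac{\lambda}{\epsilon^2}(\mathcal{D}_N^{mic})^2$ on the left-hand side. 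It then remains to bound
\[
\sum_{|\alpha|\le N}\Big[\frac1\epsilon\big(\partial_x^\alpha Q(g,g),\partial_x^\alpha g\big)+\big(\partial_x^\alpha T(g,g,g),\partial_x^\alpha g\big)\Big].
\]

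The first reduction is to replace the slot $\partial_x^\alpha g$ by its microscopic part $\{\mathbf{I}-\mathbf{P}\}\partial_x^\alpha g$: the conservation laws \eqref{1eq5} force $Q(\cdot,\cdot)$ and $T(\cdot,\cdot,\cdot)$ to be orthogonal to $Null(L)$ in $L^2(\mu(1-\mu)\d v)$ (the same cancellation already invoked in the proof of Theorem \ref{lex}), so $\mathbf{P}\partial_x^\alpha g$ contributes nothing, and after summation every term carries a factor $\mathcal{D}_N^{mic}$. For the remaining factors I expand by Leibniz, $\partial_x^\alpha Q(g,g)=\sum_{\beta\le\alpha}\binom{\alpha}{\beta}Q(\partial_x^\beta g,\partial_x^{\alpha-\beta}g)$ and likewise for $T$, and then apply the tailored estimates \eqref{2eq29}, \eqref{2eq30} and \eqref{2eq31}. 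Because $N\ge4$, in each product at most one argument carries more than $N/2\le N-2$ derivatives: that over-differentiated factor is kept in $L^2_x$ and measured by $\mathcal{E}_N$, while the other factor(s) are put in $L^\infty_x$ through $H^2(\d x)\hookrightarrow L^\infty(\d x)$ and also measured by $\mathcal{E}_N$ — this is precisely the structure built into \eqref{2eq29}--\eqref{2eq31}. Splitting each factor into macro and micro parts via \eqref{1eq15}--\eqref{1eq16}, one records that a factor carrying at least one $x$-derivative contributes $\mathcal{D}_N^{mac}$ through its macroscopic part and $\mathcal{D}_N^{mic}$ through its microscopic part.

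The genuinely delicate point is the lowest-order, purely macroscopic contributions, such as $\frac1\epsilon(Q(\mathbf{P}g,\mathbf{P}g),\{\mathbf{I}-\mathbf{P}\}g)$ and $(T(\mathbf{P}g,\mathbf{P}g,\mathbf{P}g),\{\mathbf{I}-\mathbf{P}\}g)$, where no $x$-derivative has yet fallen on the $\mathbf{P}g$ factors, so that a naive bound leaves a term like $\frac1\epsilon\mathcal{E}_N^2\mathcal{D}_N^{mic}$ (resp. $\mathcal{E}_N^3\mathcal{D}_N^{mic}$) with no dissipation to pair against. The remedy is to use $\mathbf{P}g=a+b\cdot v+c|v|^2$ with $(a,b,c)\in H^N(\d x)$ together with the Gagliardo--Nirenberg inequalities on $\mathbb{R}^3$, $\|(a,b,c)\|_{L^4_x}\le C\|(a,b,c)\|_{L^2_x}^{1/4}\|\nabla_x(a,b,c)\|_{L^2_x}^{3/4}$ and $\|(a,b,c)\|_{L^\infty_x}\le C\|(a,b,c)\|_{L^2_x}^{1/4}\|\nabla_x^2(a,b,c)\|_{L^2_x}^{3/4}$, to trade the offending $L^4_x$ or $L^\infty_x$ norm of a macroscopic factor for a power of $\|\nabla_x^k(a,b,c)\|_{L^2_x}$, which by \eqref{1eq16} is controlled by $\mathcal{D}_N^{mac}$; these terms then take the form $\frac1\epsilon\mathcal{E}_N^{1/2}(\mathcal{D}_N^{mac})^{3/2}\mathcal{D}_N^{mic}$ and $\mathcal{E}_N^{3/2}(\mathcal{D}_N^{mac})^{3/2}\mathcal{D}_N^{mic}$. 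A Young inequality with a small weight $\eta>0$ then absorbs all the $\frac1{\epsilon^2}(\mathcal{D}_N^{mic})^2$ pieces into $\frac{\lambda}{\epsilon^2}(\mathcal{D}_N^{mic})^2$, and after using $\mathcal{D}_N^{mac}\le C\mathcal{E}_N$ from \eqref{1eq17} to recombine powers (e.g. $\mathcal{E}_N(\mathcal{D}_N^{mac})^3\le C\mathcal{E}_N^2(\mathcal{D}_N^{mac})^2$) the leftovers are exactly $\frac1\epsilon(\mathcal{E}_N+\mathcal{E}_N^2)(\mathcal{D}_N^{mic})^2$ and $(\mathcal{E}_N^2+\mathcal{E}_N^4)(\mathcal{D}_N^{mac})^2$, which is \eqref{3eq13}. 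I expect the main work to be the bookkeeping over the six pieces of $Q$ in \eqref{1eq13} and the eight pieces of $T$ in \eqref{1eq14} — in particular the trilinear pieces, which have no analogue in the perturbed Boltzmann case — rather than any new analytic idea, since the inputs are the estimates of Lemma \ref{lem1}, their corollaries \eqref{2eq29}--\eqref{2eq31}, and the Gagliardo--Nirenberg trick just described.
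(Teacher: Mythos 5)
Your proposal is correct and follows essentially the same route as the paper: the same $H^N_x L^2_v$ energy identity, the same reduction of the nonlinear pairing to the microscopic slot via orthogonality of $Q$ and $T$ to $Null(L)$, the same Leibniz expansion handled by \eqref{2eq29}--\eqref{2eq31}, and the same identification of the purely macroscopic zeroth-order contributions $\big(Q(\mathbf{P}g,\mathbf{P}g),\{\mathbf{I}-\mathbf{P}\}g\big)$ and $\big(T(\mathbf{P}g,\mathbf{P}g,\mathbf{P}g),\{\mathbf{I}-\mathbf{P}\}g\big)$ as the only delicate point. The sole (cosmetic) difference is that the paper extracts the needed factor of $\mathcal{D}_N^{mac}$ there via $\|\cdot\|_{L^4_x}^2\leqslant\|\cdot\|_{L^6_x}\|\cdot\|_{L^3_x}$ plus Sobolev embedding, while your Gagliardo--Nirenberg interpolation yields $3/2$ powers of $\mathcal{D}_N^{mac}$; both close after Young's inequality and $\mathcal{D}_N^{mac}\leqslant C\mathcal{E}_N$.
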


\begin{proof}
  Applying $\partial_x^{\alpha}$ to \eqref{3eq8} and taking inner product with $\partial_x^{\alpha}g$ in $L^2(\d x; L^2(\mu(1-\mu)\d v))$ for $|\alpha|\leqslant N$, we get
  \begin{equation}\label{3eq14}
    \begin{aligned}
    \frac{1}{2}\frac{\d}{\d t}\sum_{|\alpha|\leqslant N}\left(\partial_x^\alpha g, \partial_x^\alpha g\right)+&\frac{1}{\epsilon^2}\sum_{|\alpha|\leqslant N}\Big(L\partial_x^{\alpha}g,\partial_x^{\alpha}g\Big) \\ &=\frac{1}{\epsilon^3}\sum_{|\alpha|\leqslant N}\Big(\partial_x^{\alpha}\left(Q(\epsilon g,\epsilon g)+T(\epsilon g,\epsilon g,\epsilon g)\right), \partial_x^{\alpha}g\Big).
    \end{aligned}
  \end{equation}
  where the inner product including $v\cdot\nabla g$ vanishes by integration by parts. By the coercivity of $L$ \eqref{2eq3} we have
  \begin{equation}\label{3eq15}
    \Big(L\partial_x^{\alpha}g,\partial_x^{\alpha}g\Big)\geqslant \lambda \|\{\mathbf{I}-\mathbf{P}\}\partial_x^{\alpha}g\|_{\nu}^2.
  \end{equation}
  Then we treat the right hand side of \eqref{3eq2}. Fixing $\alpha>0$, since
  \begin{equation*}
    \Big(\partial_x^{\alpha}\left(Q(\epsilon g,\epsilon g)+T(\epsilon g,\epsilon g,\epsilon g)\right), \partial_x^{\alpha}\mathbf{P}g\Big)=0,
  \end{equation*}
  we have
  \begin{equation}\label{3eq16}
    \begin{aligned}
    \Big(\partial_x^{\alpha}\left(Q(\epsilon g,\epsilon g)+T(\epsilon g,\epsilon g,\epsilon g)\right), \partial_x^{\alpha}g\Big)=&\sum_{\alpha_1+\alpha_2=\alpha}C_{\alpha}^{\alpha_1} \Big(Q(\epsilon\partial_x^{\alpha_1}g,\epsilon\partial_x^{\alpha_2}g), \partial_x^{\alpha}\{\mathbf{I}-\mathbf{P}\}g\Big)\\
    &+\sum_{\alpha_3+\alpha_4+\alpha_5=\alpha} C_{\alpha_3\alpha_4\alpha_5} \Big(T(\epsilon\partial_x^{\alpha_3}g,\epsilon\partial_x^{\alpha_4}g,\epsilon\partial_x^{\alpha_5}g), \partial_x^{\alpha}\{\mathbf{I}-\mathbf{P}\}g\Big).
    \end{aligned}
  \end{equation}
  Without loss of generality, let's assume $|\alpha_1|\leqslant N/2$, then $|\alpha_2|>0$. From \eqref{2eq29}, let $(\overline{f},\overline{g})$ be $(f,g)$, then replacing $(f,g,\widetilde{h})$ by $(\epsilon\partial_x^{\alpha_1}g,\epsilon\partial_x^{\alpha_2}g,\partial_x^{\alpha}\{\mathbf{I}-\mathbf{P}\}g)$ we deduce
  \begin{equation}\label{3eq17}
    \left|\Big(Q(\epsilon\partial_x^{\alpha_1}g,\epsilon\partial_x^{\alpha_2}g), \partial_x^{\alpha}\{\mathbf{I}-\mathbf{P}\}g\Big)\right|\leqslant C\epsilon^2 \mathcal{E}_N \left\{\mathcal{D}_N^{mac}+\mathcal{D}_N^{mic}\right\}\mathcal{D}_N^{mic}
  \end{equation}
  For the second term on the right hand side of \eqref{3eq16}, we establish the estimate
  \begin{equation}\label{3eq18}
    \left|\Big(T(\epsilon\partial_x^{\alpha_3}g,\epsilon\partial_x^{\alpha_4}g,\epsilon\partial_x^{\alpha_5}g), \partial_x^{\alpha}\{\mathbf{I}-\mathbf{P}\}g\Big)\right|
    \leqslant C\epsilon^3 \mathcal{E}_N^2 \left\{\mathcal{D}_N^{mac}+\mathcal{D}_N^{mic}\right\} \mathcal{D}_N^{mic}(g).
  \end{equation}
  for the following three cases:

\noindent(A) Two of $|\alpha_3|$, $|\alpha_4|$, $|\alpha_5|$ are equal to 0.

(a) $|\alpha_3|=0$ and $|\alpha_5|=0$, then $|\alpha_4|>0$. Using \eqref{2eq30} with $(\overline{f},\overline{g})=(f,g)$, we get \eqref{3eq18}.

(b) $|\alpha_4|=0$ and $|\alpha_5|=0$, then $|\alpha_3|>0$. Using \eqref{2eq30} with $(\overline{f},\overline{g})=(g,f)$, we get \eqref{3eq18}.

(c) $|\alpha_3|=0$ and $|\alpha_4|=0$, then $|\alpha_5|>0$. Using \eqref{2eq31} with $(\overline{f},\overline{g})=(f,g)$, we get \eqref{3eq18}.

\noindent(B) Only one of $|\alpha_3|$, $|\alpha_4|$, $|\alpha_5|$ is equal to 0.

(a) $|\alpha_3|=0$. Using \eqref{2eq31} with $(\overline{f},\overline{g})=(f,g)$ if $|\alpha_4|\leqslant |\alpha_5|$, or using \eqref{2eq30} with $(\overline{f},\overline{g})=(f,g)$ if $|\alpha_4|> |\alpha_5|$, to get \eqref{3eq18}.

(b) $|\alpha_4|=0$. Using \eqref{2eq31} with $(\overline{f},\overline{g})=(f,g)$ if $|\alpha_3|\leqslant |\alpha_5|$, or using \eqref{2eq30} with $(\overline{f},\overline{g})=(g,f)$ if $|\alpha_3|> |\alpha_5|$, to get \eqref{3eq18}.

(c) $|\alpha_5|=0$. Using \eqref{2eq30} with $(\overline{f},\overline{g})=(f,g)$ if $|\alpha_3|\leqslant |\alpha_4|$, or using \eqref{2eq30} with $(\overline{f},\overline{g})=(g,f)$ if $|\alpha_3|> |\alpha_4|$, to get \eqref{3eq18}.

\noindent(C) All of $|\alpha_3|$, $|\alpha_4|$, $|\alpha_5|$ are greater than 0. We claim that at least two of the $|\alpha_3|$, $|\alpha_4|$, $|\alpha_5|$ are less than or equal to $|\alpha|/2$. This case is similar to case (A).

  Thus, plugging \eqref{3eq17} and \eqref{3eq18} into \eqref{3eq16} yields for $0<|\alpha|\leqslant N$
  \begin{equation}\label{3eq19}
    \left|\Big(\partial_x^{\alpha}\left(Q(\epsilon g,\epsilon g)+T(\epsilon g,\epsilon g,\epsilon g)\right), \partial_x^{\alpha}g\Big)\right|\leqslant C\epsilon^2 \Big(\mathcal{E}_N +\epsilon\mathcal{E}_N^2\Big) \left\{\mathcal{D}_N^{mac}+\mathcal{D}_N^{mic}\right\}\mathcal{D}_N^{mic}(g).
  \end{equation}

  Next, for $|\alpha|=0$, similar to \eqref{3eq16}, we have
  \begin{equation}\label{3eq20}
    \Big(Q(\epsilon g,\epsilon g)+T(\epsilon g,\epsilon g,\epsilon g), g\Big)=\Big(Q(\epsilon g,\epsilon g)+T(\epsilon g,\epsilon g,\epsilon g), \{\mathbf{I}-\mathbf{P}\}g\Big).
  \end{equation}
  Splitting $g=\mathbf{P}g+\{\mathbf{I}-\mathbf{P}\}g$, to further decompose $\Big(Q(\epsilon g,\epsilon g),\{\mathbf{I}-\mathbf{P}\}g\Big)$ into
  \begin{equation}\label{3eq21}
    \Big(Q(\epsilon\mathbf{P}g,\epsilon\mathbf{P}g), \{\mathbf{I}-\mathbf{P}\}g\Big)+\Big(Q(\epsilon\{\mathbf{I}-\mathbf{P}\}g,\epsilon\mathbf{P}g), \{\mathbf{I}-\mathbf{P}\}g\Big)+\Big(Q(\epsilon g,\epsilon\{\mathbf{I}-\mathbf{P}\}g),\{\mathbf{I}-\mathbf{P}\}g\Big).
  \end{equation}
  Recall the estimate \eqref{2eq29}, replacing $f$ by $\epsilon\{\mathbf{I}-\mathbf{P}\}g$ and $g$ by $\epsilon g$ there we get
  \begin{equation}\label{3eq22}
  \begin{aligned}
    &\left|\Big(Q(\epsilon\{\mathbf{I}-\mathbf{P}\}g,\epsilon\mathbf{P}g), \{\mathbf{I}-\mathbf{P}\}g\Big)+\Big(Q(\epsilon g,\epsilon\{\mathbf{I}-\mathbf{P}\}g),\{\mathbf{I}-\mathbf{P}\}g\Big)\right|\\
    \leqslant &C\epsilon^2\mathcal{E}_N \left\{\mathcal{D}_N^{mac}+\mathcal{D}_N^{mic}\right\}\mathcal{D}_N^{mic}.
  \end{aligned}
  \end{equation}
  On the other hand, putting
  \begin{equation*}
    \mathbf{P} g=a(t,x)+b(t,x)\cdot v+ c(t,x)|v|^2,
  \end{equation*}
  into the first term in \eqref{3eq21} and using the H\"{o}lder's inequality to get
  \begin{equation}\label{3eq23}
    \begin{aligned}
    \left|\Big(Q(\epsilon\mathbf{P}g,\epsilon\mathbf{P}g),\{\mathbf{I}-\mathbf{P}\}g\Big) \right|
    \leqslant& C \epsilon^2\int_{\mathbb{R}^3}|\mathbf{P}g|_{L_v^2}|\nu^{1/2}\mathbf{P}g|_{L_v^2}|\nu^{1/2} \{\mathbf{I}-\mathbf{P}\}g|_{L_v^2}dx\\
    \leqslant &C \epsilon^2\left(\int_{\mathbb{R}^3}|\mathbf{P}g|_{L_v^2}^4dx\right)^{1/4} \left(\int_{\mathbb{R}^3}|\nu^{1/2}\mathbf{P}g|_{L_v^2}^4dx\right)^{1/4}\|\nu^{1/2}\{\mathbf{I}-\mathbf{P}\}g\|_{L_x^2L_v^2} \\
    \leqslant &C\epsilon^2\|\mathbf{P}g\|_{H_x^1L_v^2}\|\nabla_x \mathbf{P}g\|_{L_x^2L_v^2} \|\nu^{1/2}\{\mathbf{I}-\mathbf{P}\}g\|_{L_x^2L_v^2}\\
    \leqslant &C\epsilon^2\mathcal{E}_N \mathcal{D}_N^{mac}\mathcal{D}_N^{mic},
    \end{aligned}
  \end{equation}
  where the third step is due to \eqref{3eq16}, in detail,
  \begin{equation*}
  \begin{aligned}
    \left(\int_{\mathbb{R}^3}|\nu^{1/2}\mathbf{P}g|_{L_v^2}^4dx\right)^{1/2}\leqslant
    & C \left\|a^2+|b|^2+c^2\right\|\\
    \leqslant &C \left(\|a(t,\cdot)\|_{L^6\left(\d x\right)}+ \|b(t,\cdot)\|_{L^6\left(\d x\right)}+\|c(t,\cdot)\|_{L^6\left(\d x\right)}\right)\times\\
    &\left(\|a(t,\cdot)\|_{L^3\left(\d x\right)}+ \|b(t,\cdot)\|_{L^3\left(\d x\right)}+\|c(t,\cdot)\|_{L^3\left(\d x\right)}\right)\\
    \leqslant & C \left(\|\nabla_x a(t,\cdot)\|_{L^2\left(\d x\right)}+ \|\nabla_x b(t,\cdot)\|_{L^2\left(\d x\right)}+\|\nabla_x c(t,\cdot)\|_{L^2\left(\d x\right)}\right)\times\\
    &\left(\|a(t,\cdot)\|_{H^1\left(\d x\right)}+ \|b(t,\cdot)\|_{H^1\left(\d x\right)}+\|c(t,\cdot)\|_{H^1\left(\d x\right)}\right)\\
    \leqslant & C \|\nabla_x \mathbf{P}g\|_{L_{x,v}^2}\|\mathbf{P}g\|_{H_x^1\left(L_v^2\right)}.
  \end{aligned}
  \end{equation*}
  Plugging \eqref{3eq22} and \eqref{3eq23} into \eqref{3eq21} we get
  \begin{equation}\label{3eq24}
    \Big(Q(\epsilon g,\epsilon g),\{\mathbf{I}-\mathbf{P}\}g\Big)\leqslant C \epsilon^2\mathcal{E}_N \left\{\mathcal{D}_N^{mac}+\mathcal{D}_N^{mic}\right\}\mathcal{D}_N^{mic}.
  \end{equation}
  Repeating the process of \eqref{3eq21}-\eqref{3eq24} to get
  \begin{equation}\label{3eq25}
    \Big(T(\epsilon g,\epsilon g,\epsilon g),\{\mathbf{I}-\mathbf{P}\}g\Big)\leqslant C \epsilon^3\mathcal{E}_N^2 \left\{\mathcal{D}_N^{mac}+\mathcal{D}_N^{mic}\right\}\mathcal{D}_N^{mic}.
  \end{equation}
  Hence combining \eqref{3eq20} with \eqref{3eq24} and \eqref{3eq25} we obtain
  \begin{equation}\label{3eq26}
    \left|\Big(Q(\epsilon g,\epsilon g)+T(\epsilon g,\epsilon g,\epsilon g), g\Big)\right|\leqslant C \epsilon^2\left(\mathcal{E}_N+\epsilon\mathcal{E}_N^2\right) \left\{\mathcal{D}_N^{mac}+\mathcal{D}_N^{mic}\right\}\mathcal{D}_N^{mic}.
  \end{equation}
  We plug \eqref{3eq15}, \eqref{3eq19} and \eqref{3eq26} into \eqref{3eq2} and use Cauchy-Schwarz inequality with $\eta >0$ to get
  \begin{equation}\label{3eq27}
    \begin{aligned}
    \frac{1}{2}\frac{\d}{\d t}\mathcal{E}_N^2+\frac{\lambda}{\epsilon^2}\left(\mathcal{D}_N^{mic}\right)^2
    \leqslant& \frac{C}{\epsilon}\left(\mathcal{E}_N+\epsilon\mathcal{E}_N^2\right) \left\{\mathcal{D}_N^{mac}+\mathcal{D}_N^{mic}\right\} \mathcal{D}_N^{mic}\\
    \leqslant & \frac{C}{\epsilon}\left(\mathcal{E}_N+\mathcal{E}_N^2\right)\left(\mathcal{D}_N^{mic}\right)^2+ \frac{\eta}{\epsilon^2}\left(\mathcal{D}_N^{mic}\right)^2+ \frac{C}{\eta}\left(\mathcal{D}_N^{mac}\right)^2\left(\mathcal{E}_N^2+\mathcal{E}_N^4\right).
    \end{aligned}
  \end{equation}
  By taking $\eta=\frac{\lambda}{2}$ we complete the proof.
\end{proof}

\subsection{Macroscopic energy estimate}
Now we study the macroscopic part $\mathbf{P}g$, where $g$ is a solution of the equation \eqref{3eq8}.
The key idea is to show the macroscopic part of the solution, $a$, $b$, $c$, are bounded by the microscopic part $\{\mathbf{I}-\mathbf{P}\}g$. Therefore, we plug the decomposition $g=\,\mathbf{P} g\,+\{\mathbf{I}-\mathbf{P}\}g$ into the Boltzmann-Fermi-Dirac equation \eqref{3eq8} to get the time evolution of the macroscopic part $\mathbf{P}g$:
\begin{equation}\label{3eq28}
  \left\{\partial_t +\frac{1}{\epsilon}v\cdot\nabla_x\right\} \left(\mathbf{P}g\right)=-\left\{\partial_t +\frac{1}{\epsilon}v\cdot\nabla_x+\frac{1}{\epsilon^2}L\right\}\{\,\mathbf{I}-\mathbf{P}\,\}g +\frac{1}{\epsilon}Q(g,g)+T(g,g,g).
\end{equation}

In order to study the time evolution of $\mathbf{P}g$ precisely, we write
\begin{equation}\label{3eq29}
  \mathbf{P} g=a(t,x)+b(t,x)\cdot v+ c(t,x)|v|^2,
\end{equation}
and set the thirteen moments
\begin{equation}\label{3eq30}
  \big\{e_k(v)\big\}_{k=1}^{13}=\left\{\,1,\,v_i,\,v_iv_j,\,v_i|v|^2\,\right\},\;i,j=1,2,3.
\end{equation}
Then for fixed $t,x$, put the expansion \eqref{3eq29} into \eqref{3eq28} to get the following macroscopic equations:
\begin{align}
    \partial_{t} a &=-\partial_{t} r^{(0)}+\frac{1}{\epsilon}m^{(0)}+\frac{1}{\epsilon^2}\ell^{(0)}+h^{(0)} \label{3eq31}\\
    \partial_{t} b_{i}+\frac{1}{\epsilon}\partial_{i} a &=-\partial_{t} r_{i}^{(1)}+\frac{1}{\epsilon}m_{i}^{(1)}+\frac{1}{\epsilon^2}\ell_{i}^{(1)}+h_{i}^{(1)}\label{3eq32} \\
    \partial_{t} c+\frac{1}{\epsilon}\partial_{i} b_{i} &=-\partial_{t} r_{i}^{(2)}+\frac{1}{\epsilon}m_{i}^{(2)}+\frac{1}{\epsilon^2}\ell_{i}^{(2)}+h_{i}^{(2)} \label{3eq33}\\
    \frac{1}{\epsilon}\partial_{i} b_{j}+\frac{1}{\epsilon}\partial_{j} b_{i} &=-\partial_{t} r_{i j}^{(2)}+\frac{1}{\epsilon}m_{ij}^{(2)}+\frac{1}{\epsilon^2}\ell_{ij}^{(2)}+h_{i j}^{(2)}, \quad i \neq j \label{3eq34}\\
    \frac{1}{\epsilon}\partial_{i} c &=-\partial_{t} r_{i}^{(3)}+\frac{1}{\epsilon}m_{i}^{(3)}+\frac{1}{\epsilon^2}\ell_{i}^{(3)}+h_{i}^{(3)},\label{3eq35}
\end{align}
by collecting the coefficients on both sides with respect to the moments in \eqref{3eq30}, where $r^{(0)}, r_{i}^{(1)},\cdots, h_{i j}^{(2)}, h_{i}^{(3)}$ are coefficients of $\{\,\mathbf{I}-\mathbf{P}\,\}g$, $-v\cdot\nabla_x\{\,\mathbf{I}-\mathbf{P}\,\}g$, $-L\{\,\mathbf{I}-\mathbf{P}\,\}g$ and $\frac{1}{\epsilon}Q(g,g)+T(g,g,g)$ with respect to the moments in \eqref{3eq30} respectively.

On the other hand, we introduce some constants:
\begin{equation}\label{3eq36}
  \begin{array}{ll}
    \displaystyle p_0^0=\int_{\mathbb{R}^3}\mu\d v,&\displaystyle\qquad p_2^0 =\int_{\mathbb{R}^3}|v|^2\mu\d v,\\
    \displaystyle p_0^1=\int_{\mathbb{R}^3}\mu(1-\mu)\d v, &\displaystyle\qquad p_2^1 =\int_{\mathbb{R}^3}|v|^2\mu(1-\mu)\d v,\\
    \displaystyle p_4^1=\int_{\mathbb{R}^3}|v|^4\mu(1-\mu)\d v. &
  \end{array}
\end{equation}
By taking inner product with $1,\,v,\,|v|^2$ in $ L^{2}\left(\mu(1-\mu)\d v\right)$ respectively, equation \eqref{3eq28} yields the local conservation laws:
\begin{equation}\label{3eq37}
\left\{
  \begin{aligned}
  &\partial_ta-\frac{p_2^1}{p_0^1p_4^1-(p_2^1)^2}\frac{1}{\epsilon}\nabla\cdot\left\langle|v|^{2} v , \{\mathbf{I}-\mathbf{P}\}g\right\rangle=0,\\
  &\partial_t b_i+\frac{1}{\epsilon}\partial_i\left(a+\frac{p_4^1}{p_2^1}c\right)+\frac{3}{p_2^1}\frac{1}{\epsilon}\nabla\cdot\left\langle vv_{i} , \{\mathbf{I}-\mathbf{P}\}g\right\rangle=0,\\
  &\partial_tc+\frac{1}{3\epsilon}\nabla\cdot b+\frac{p_0^1}{p_0^1p_4^1-(p_2^1)^2}\frac{1}{\epsilon}\nabla\cdot\left\langle|v|^{2} v , \{\mathbf{I}-\mathbf{P}\}g\right\rangle=0.
  \end{aligned}
\right.
\end{equation}

Next, we claim that the coefficients of the linear terms $\{\,\mathbf{I}-\mathbf{P}\,\}g$, $-v\cdot\nabla_x\{\,\mathbf{I}-\mathbf{P}\,\}g$, $-L\{\,\mathbf{I}-\mathbf{P}\,\}g$ and the nonlinear term $\frac{1}{\epsilon}Q(g,g)+T(g,g,g)$ involved in the right-hand side of macroscopic system \eqref{3eq31}-\eqref{3eq35} can be bounded by the microscopic dissipation rate.
\begin{lemma}\label{lem8}
    It holds that
    \begin{equation}\label{3eq38}
      \sum\limits_{|\alpha| \leqslant N-1} \sum\limits_{i j}\left\|\partial_{x}^{\alpha}\left[r^{(0)}, r_{i}^{(1)}, r_{i}^{(2)}, r_{i j}^{(2)}, r_{i}^{(3)}\right]\right\| \leqslant C \sum\limits_{|\alpha| \leqslant N-1}\left\|\partial_{x}^{\alpha} \{\mathbf{I}-\mathbf{P}\}g\right\|,
    \end{equation}
    \begin{equation}\label{3eq39}
      \sum\limits_{|\alpha| \leqslant N-1} \sum\limits_{i j}\left\|\partial_{x}^{\alpha}\left[m^{(0)}, m_{i}^{(1)}, m_{i}^{(2)}, m_{ij}^{(2)}, m_{i}^{(3)}\right]\right\| \leqslant C \sum\limits_{|\alpha| \leqslant N}\left\|\partial_{x}^{\alpha} \{\mathbf{I}-\mathbf{P}\}g\right\|,
    \end{equation}
    \begin{equation}\label{3eq40}
      \sum\limits_{|\alpha| \leqslant N-1} \sum\limits_{i j}\left\|\partial_{x}^{\alpha}\left[\ell^{(0)}, \ell_{i}^{(1)}, \ell_{i}^{(2)}, \ell_{ij}^{(2)}, \ell_{i}^{(3)}\right]\right\| \leqslant C \sum\limits_{|\alpha| \leqslant N-1}\left\|\partial_{x}^{\alpha} \{\mathbf{I}-\mathbf{P}\}g\right\|,
    \end{equation}
    and
    \begin{equation}\label{3eq41}
      \sum\limits_{|\alpha| \leqslant N} \sum\limits_{i j}\left\|\partial_{x}^{\alpha}\left[h^{(0)}, h_{i}^{(1)}, h_{i}^{(2)}, h_{i j}^{(2)}, h_{i}^{(3)}\right]\right\| \leqslant \frac{C}{\epsilon}\left(\mathcal{E}_N+\mathcal{E}_N^2\right) \left(\mathcal{D}_N^{mac}+\mathcal{D}_N^{mic}\right).
    \end{equation}
\end{lemma}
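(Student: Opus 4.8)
The plan is to read each of the scalar coefficients $r^{(\cdot)}$, $m^{(\cdot)}$, $\ell^{(\cdot)}$, $h^{(\cdot)}$ off as a fixed finite linear combination --- obtained once and for all by inverting the constant invertible Gram matrix of the thirteen moments $\{e_k\}_{k=1}^{13}$ of \eqref{3eq30}, equivalently by testing against the dual family $\{e_k^{*}\}$ whose members are again polynomials in $v$ --- of the $v$-pairings $\langle\{\mathbf{I}-\mathbf{P}\}g,e_k^{*}\rangle$, $\langle-v\cdot\nabla_x\{\mathbf{I}-\mathbf{P}\}g,e_k^{*}\rangle$, $\langle-L\{\mathbf{I}-\mathbf{P}\}g,e_k^{*}\rangle$, and $\langle\frac1\epsilon Q(g,g)+T(g,g,g),e_k^{*}\rangle$. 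I will estimate each pairing pointwise in $x$, apply $\partial_x^{\alpha}$, take $L^2_x$ norms, and sum over the relevant range of $\alpha$. Since $\mu(1-\mu)\lesssim M$ has Gaussian decay, every polynomial $\phi$ in $v$ satisfies $|\phi|_{L_v^2}<\infty$ and $|\nu^{1/2}\phi|_{L_v^2}<\infty$ by \eqref{2eq1}, which makes all the pairings below finite.

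For \eqref{3eq38}, pointwise in $x$ one has $|\langle\{\mathbf{I}-\mathbf{P}\}g,e_k^{*}\rangle|\le|e_k^{*}|_{L_v^2}\,|\{\mathbf{I}-\mathbf{P}\}g|_{L_v^2}$, hence $\|\partial_x^{\alpha}r^{(\cdot)}\|\le C\|\partial_x^{\alpha}\{\mathbf{I}-\mathbf{P}\}g\|$, and summation over $|\alpha|\le N-1$ gives the bound. For \eqref{3eq39} I first rewrite the transport moment in flux form, $\langle-v\cdot\nabla_x\{\mathbf{I}-\mathbf{P}\}g,e_k^{*}\rangle=-\nabla_x\!\cdot\!\langle v\,e_k^{*},\{\mathbf{I}-\mathbf{P}\}g\rangle$, so $\partial_x^{\alpha}m^{(\cdot)}$ is a sum of terms $\partial_x^{\alpha}\partial_{x_j}\langle v_j e_k^{*},\{\mathbf{I}-\mathbf{P}\}g\rangle$, each controlled by $C\|\partial_x^{\alpha}\partial_{x_j}\{\mathbf{I}-\mathbf{P}\}g\|$; this extra derivative is exactly why the right side of \eqref{3eq39} runs to $|\alpha|\le N$ while the left runs to $N-1$. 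For \eqref{3eq40} I use the self-adjointness of $L$ on $L^2(\mu(1-\mu)\d v)$ from Proposition \ref{prop1}: $\langle-L\{\mathbf{I}-\mathbf{P}\}g,e_k^{*}\rangle=-\langle\{\mathbf{I}-\mathbf{P}\}g,Le_k^{*}\rangle$, and $|Le_k^{*}|_{L_v^2}\le|\nu e_k^{*}|_{L_v^2}+|Ke_k^{*}|_{L_v^2}<\infty$ since $\nu(v)\le C_2(1+|v|)$ by \eqref{2eq1} and $K$ is bounded on $L_v^2$; thus $|\ell^{(\cdot)}|\le C|\{\mathbf{I}-\mathbf{P}\}g|_{L_v^2}$ pointwise in $x$, and \eqref{3eq40} follows as for \eqref{3eq38}.

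The substantive estimate is \eqref{3eq41}. Expanding $\partial_x^{\alpha}\big(\frac1\epsilon Q(g,g)+T(g,g,g)\big)$ by the Leibniz rule and testing against $e_k^{*}$ reduces matters to bounding $\|\langle Q(\partial_x^{\alpha_1}g,\partial_x^{\alpha_2}g),e_k^{*}\rangle\|$ for $\alpha_1+\alpha_2=\alpha$ and the analogous trilinear quantities. Here the tool is the $x$-pointwise form of \eqref{2eq5} and \eqref{2eq7} established in the proof of Lemma \ref{lem1}, in which the test function enters only through $|\nu^{1/2}e_k^{*}|_{L_v^2}<\infty$ (the global forms \eqref{2eq6}, \eqref{2eq8} cannot be applied directly since $\|\nu^{1/2}e_k^{*}\|_{L^2_{x,v}}=\infty$). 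Inserting $g=\mathbf{P}g+\{\mathbf{I}-\mathbf{P}\}g$ in every slot: any term retaining at least one microscopic factor keeps that factor in $L^2_x$ to supply $\|\nu^{1/2}\partial_x^{\beta}\{\mathbf{I}-\mathbf{P}\}g\|\le\mathcal{D}_N^{mic}$, while the other (at most two) factors are placed in $L^\infty_x$ through $H^2(\d x)\hookrightarrow L^\infty(\d x)$; distributing derivatives so that each $L^\infty_x$-slot carries $\le|\alpha|/2\le N/2$ derivatives leaves room for the two Sobolev derivatives precisely because $N\ge4$, and bounds that slot by $C\mathcal{E}_N$. The bookkeeping over which multi-index is largest is the case analysis (A)--(C) of the proof of Lemma \ref{lem7}, and the $\epsilon$-free term $T(g,g,g)$ is absorbed using $\epsilon<1$.

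The delicate point, which I expect to be the main obstacle, is the purely macroscopic interaction $Q(\mathbf{P}g,\mathbf{P}g)$ together with $T(\mathbf{P}g,\mathbf{P}g,\mathbf{P}g)$: these carry no microscopic factor, and for $|\alpha|=0$ no $x$-derivative either, so the naive bound produces only $\mathcal{E}_N^2$ and no dissipation. I would handle it exactly as in the derivation of \eqref{3eq23}: writing $\mathbf{P}g=a+b\cdot v+c|v|^2$ one gets, pointwise in $x$, $|\langle Q(\mathbf{P}g,\mathbf{P}g),e_k^{*}\rangle|\le C|(a,b,c)|^2$ and $|\langle T(\mathbf{P}g,\mathbf{P}g,\mathbf{P}g),e_k^{*}\rangle|\le C|(a,b,c)|^3$, and then, by H\"older in $x$ together with the Sobolev embeddings $\dot H^1(\d x)\hookrightarrow L^6(\d x)$, $H^1(\d x)\hookrightarrow L^3(\d x)$ and \eqref{1eq16}, one extracts a factor $\|\nabla_x(a,b,c)\|_{L^2_x}\le C\mathcal{D}_N^{mac}$: namely $\||(a,b,c)|^2\|_{L^2_x}\le\|(a,b,c)\|_{L^6_x}\|(a,b,c)\|_{L^3_x}\le C\mathcal{D}_N^{mac}\mathcal{E}_N$, while $\||(a,b,c)|^3\|_{L^2_x}=\|(a,b,c)\|_{L^6_x}^3\le C(\mathcal{D}_N^{mac})^3\le C\mathcal{E}_N^2\mathcal{D}_N^{mac}$ by \eqref{1eq17}; for $|\alpha|\ge1$ at least one Leibniz derivative falls on a macroscopic slot and supplies $\mathcal{D}_N^{mac}$ outright. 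Collecting the microscopic contributions ($\lesssim(\mathcal{E}_N+\mathcal{E}_N^2)\mathcal{D}_N^{mic}$) and the macroscopic ones ($\lesssim(\mathcal{E}_N+\mathcal{E}_N^2)\mathcal{D}_N^{mac}$), recalling the $\frac1\epsilon$ prefactor on the $Q$-part, and summing over $|\alpha|\le N$ yields \eqref{3eq41}; the remaining labour of running this through each of $Q_1,\dots,Q_6$, $T_1,\dots,T_8$ is tedious but routine.
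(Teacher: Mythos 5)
Your proof is correct and follows the same overall architecture as the paper's: read each coefficient off as a fixed linear combination of moments against the (orthonormalized) $e_k$, estimate the $r$-coefficients by Cauchy--Schwarz in $v$, the $m$-coefficients by pulling the divergence outside at the cost of one extra $x$-derivative, the $\ell$-coefficients using $\nu(v)\leqslant C(1+|v|)$ and the boundedness of $K$ (you move $L$ onto $e_k^{*}$ by self-adjointness, the paper estimates $L\{\mathbf{I}-\mathbf{P}\}g$ directly; both are fine), and the $h$-coefficients via the bilinear/trilinear estimates of Lemma \ref{lem1}. Two remarks on the nonlinear part. First, your parenthetical claim that \eqref{2eq6} and \eqref{2eq8} ``cannot be applied directly since $\|\nu^{1/2}e_k^{*}\|_{L^2_{x,v}}=\infty$'' misreads those estimates: they only require $\sup_{x,v}|(\mu(1-\mu))^{1/4}\widetilde{h}|<\infty$, which holds for any polynomial $\widetilde{h}=e_n$ by the Gaussian decay of $\mu(1-\mu)$, and this is exactly how the paper concludes \eqref{3eq41} in one line. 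Second, and in your favour: the one-line application of \eqref{2eq6}, \eqref{2eq8} produces $\|g\|_{L^2_{x,v}}$ on the right, which for $|\alpha|=0$ contains the underived macroscopic part $\|\mathbf{P}g\|$ and is therefore controlled only by $\mathcal{E}_N$, not by $\mathcal{D}_N^{mac}+\mathcal{D}_N^{mic}$; your treatment of the purely macroscopic term $Q(\mathbf{P}g,\mathbf{P}g)$ (and its trilinear analogue) via $\||(a,b,c)|^2\|_{L^2_x}\leqslant\|(a,b,c)\|_{L^6_x}\|(a,b,c)\|_{L^3_x}\leqslant C\|\nabla_x(a,b,c)\|_{L^2_x}\|(a,b,c)\|_{H^1_x}$, exactly as in \eqref{3eq23}, is the step actually needed to obtain the dissipation factor stated in \eqref{3eq41}, and the paper glosses over it. So your argument is, if anything, more complete than the one in the text.
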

\begin{proof}
Let $\{\widetilde{e_i}(v)\}_{i=1}^{13}$ be its corresponding orthonormal basis such that for some constants $\xi^{ij}$
  \begin{equation*}
    \widetilde{e_i}(v)=\sum_{j=1}^{13}\xi^{ij}e_i(v).
  \end{equation*}

  For the linear term, the coefficients $m^{(0)}, m_{i}^{(1)}, m_{i}^{(2)}, m_{ij}^{(2)}, m_{i}^{(3)}$ of the projection of $-v\cdot\nabla_x\{\mathbf{I}-\mathbf{P}\}g$ take the form
  \begin{equation*}
    -\sum_{i,n=1}^{13}\xi^{ij}\xi^{in}\int_{\mathbb{R}^3}\{v\cdot\nabla_x\} \{\mathbf{I}-\mathbf{P}\}g\cdot e_n(v)\mu(1-\mu)\d v.
  \end{equation*}
  Same is true after we take $\partial_x^{\alpha}$. Let $|\alpha| \leqslant N-1$, \eqref{3eq39} holds for that
  \begin{equation*}
    \begin{aligned}
    &\left\|\int_{\mathbb{R}^3}v\cdot\nabla_x\{\mathbf{I}-\mathbf{P}\}\partial_x^{\alpha}g\cdot e_n(v)\mu(1-\mu)\d v\right\|^2\\
    \leqslant & \int_{\mathbb{R}^3}|e_n(v)|^2|v|^2 \mu(1-\mu)\d v\iint\limits_{\mathbb{R}^3\times \mathbb{R}^3}|\{\mathbf{I}-\mathbf{P}\}\nabla\partial_x^{\alpha}g|^2\mu(1-\mu)\d v\d x\\
    \leqslant & C \sum\limits_{|\alpha| \leqslant N}\left\|\partial_{x}^{\alpha} \{\mathbf{I}-\mathbf{P}\}g\right\|^{2},
    \end{aligned}
  \end{equation*}
  Note that $\nu(v)\leqslant C(1+|v|)$ and $K$ is bounded from $L^2(\mu(1-\mu)\d v)$ to itself, then the same process as above gives the estimates \eqref{3eq38} and \eqref{3eq40} for the coefficients of the rest linear terms.

  For the coefficients $h^{(0)}$, $h_{i}^{(1)}$, $h_{i}^{(2)}$, $h_{i j}^{(2)}$, $h_{i}^{(3)}$, by \eqref{2eq6} and \eqref{2eq8}, we have
  \begin{equation*}
    \left\|\left\langle\partial_x^\alpha\left[\frac{1}{\epsilon}Q(g,g)+T(g,g,g)\right], e_n\right\rangle\right\| \leqslant \frac{C}{\epsilon}\left(1+\epsilon\mathcal{E}_N\right)\mathcal{E}_N \left(\mathcal{D}_N^{mac}+\mathcal{D}_N^{mic}\right),
  \end{equation*}
  thus \eqref{3eq41} holds.
\end{proof}

Similarly, those terms containing the microscopic part $\{\mathbf{I}-\mathbf{P}\}g$ in the conservation laws \eqref{3eq37} can be also bounded by the microscopic dissipation rate.
\begin{lemma}\label{lem9}
  It holds that
  \begin{equation}\label{3eq42}
    \sum\limits_{|\alpha| \leqslant N-1}\left\|\partial_{x}^{\alpha} \nabla_{x} \cdot\left[\left\langle|v|^{2} v, \{\mathbf{I}-\mathbf{P}\}g\right\rangle,\left\langle v \otimes v, \{\mathbf{I}-\mathbf{P}\}g\right\rangle\right]\right\|^{2}\leqslant C \sum\limits_{0<|\alpha|\leqslant N}\left\|\partial_{x}^{\alpha} \{\mathbf{I}-\mathbf{P}\}g\right\|^{2}.
  \end{equation}
\end{lemma}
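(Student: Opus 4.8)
\emph{Proof proposal.} The plan is to observe that the quantities on the left are nothing but velocity moments of $\{\mathbf{I}-\mathbf{P}\}g$ against fixed polynomials in $v$, so that the whole estimate collapses — after commuting the $x$-derivatives past the $v$-integration — to a single Cauchy--Schwarz inequality in $v$. The only substantive ingredient is that the polynomial weights $|v|^{2}v$ and $v\otimes v$ are square-integrable against the equilibrium weight $\mu(1-\mu)$, which is immediate from $\mu(1-\mu)\leqslant CM=Ce^{-|v|^2/2}$ recorded at the start of Section~\ref{sec2}.

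First I would note that $\mathbf{P}$ is the orthogonal projection in $L^2(\mu(1-\mu)\d v)$ onto $Null(L)=\operatorname{span}\{1,v,|v|^2\}$; it acts only in the $v$-variable and does not depend on $x$, hence commutes with every $\partial_{x_i}$. Therefore, for any multi-index $\alpha$,
\begin{equation*}
  \partial_x^\alpha\nabla_x\cdot\langle|v|^2v,\{\mathbf{I}-\mathbf{P}\}g\rangle
  =\big\langle|v|^2v,\;\{\mathbf{I}-\mathbf{P}\}\,\partial_x^\alpha\nabla_x g\big\rangle,
\end{equation*}
and likewise for $\langle v\otimes v,\{\mathbf{I}-\mathbf{P}\}g\rangle$. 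Applying Cauchy--Schwarz in $v$ against the measure $\mu(1-\mu)\,\d v$ gives, for each fixed $(t,x)$,
\begin{equation*}
  \big|\langle|v|^2v,\{\mathbf{I}-\mathbf{P}\}\partial_x^\alpha\nabla_x g\rangle\big|^2
  \leqslant\Big(\int_{\mathbb{R}^3}|v|^6\mu(1-\mu)\,\d v\Big)\int_{\mathbb{R}^3}\big|\{\mathbf{I}-\mathbf{P}\}\partial_x^\alpha\nabla_x g\big|^2\mu(1-\mu)\,\d v,
\end{equation*}
with the first factor a finite constant; the term carrying $v\otimes v$ is identical up to replacing $|v|^6$ by $|v|^4$.

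Next I would integrate these pointwise bounds over $x\in\mathbb{R}^3$ and sum over $|\alpha|\leqslant N-1$ (and over the finitely many scalar components of the two moment families). Each operator $\partial_x^\alpha\nabla_x$ appearing on the right has total order $|\alpha|+1$, which ranges over $1\leqslant|\alpha|+1\leqslant N$; hence the sum of the right-hand sides is controlled by $C\sum_{0<|\beta|\leqslant N}\|\partial_x^\beta\{\mathbf{I}-\mathbf{P}\}g\|^2$, which is exactly the asserted bound (using once more that $\{\mathbf{I}-\mathbf{P}\}$ commutes with $\partial_x^\beta$).

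I do not expect a genuine obstacle in this lemma: it is pure bookkeeping of derivative orders together with the Gaussian decay of $\mu(1-\mu)$, and is in fact a simpler variant of the estimate \eqref{3eq39} already established in Lemma~\ref{lem8} — there is no collision operator to control here. The only point worth stating carefully is the commutation of $\{\mathbf{I}-\mathbf{P}\}$ with $\partial_x^\alpha\nabla_x$, which holds precisely because $\mathbf{P}$ is an $x$-independent operator acting in the $v$-variable alone.
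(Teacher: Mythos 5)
Your argument is correct and is essentially the same one the paper intends: the paper states Lemma \ref{lem9} without proof precisely because it follows by the same Cauchy--Schwarz computation displayed in the proof of \eqref{3eq39} in Lemma \ref{lem8}, namely bounding $\left|\left\langle e_n(v),\{\mathbf{I}-\mathbf{P}\}\partial_x^{\alpha}\nabla_x g\right\rangle\right|^2$ by a finite $v$-moment of $\mu(1-\mu)$ times the $L^2_v$ norm of the microscopic part. Your bookkeeping of the derivative orders ($1\leqslant|\alpha|+1\leqslant N$, hence the sum over $0<|\alpha|\leqslant N$ on the right) and the commutation of $\mathbf{P}$ with $\partial_x^{\alpha}$ are both correct.
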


\vskip\baselineskip

Before we establish the crucial macroscopic energy estimate, we shall introduce the notation:
\begin{equation}\label{3eq43}
    \begin{aligned}
    G_{\alpha,i}^1(t)&=\Big\langle \partial_i\partial_x^\alpha a, \partial_x^\alpha b_i \Big\rangle, \quad G_{\alpha,i}^2(t)=\left\langle \partial_x^\alpha r_i^{(1)}, \partial_i\partial_x^\alpha a \right\rangle,\quad G_{\alpha,i}^3(t)=\left\langle \partial_x^\alpha r_i^{(3)}, \partial_i\partial_x^\alpha c \right\rangle,\\
    G_{\alpha,i}^4(t)&=-\sum_{j\neq i}\left\langle \partial_x^\alpha r_j^{(2)}, \partial_i\partial_x^\alpha b_i \right\rangle+\sum_{j\neq i}\left\langle \partial_x^\alpha r_{ji}^{(2)}, \partial_j\partial_x^\alpha b_i \right\rangle+2\left\langle \partial_x^\alpha r_i^{(2)}, \partial_i\partial_x^\alpha b_i \right\rangle,\\
    \end{aligned}
\end{equation}
and
\begin{equation}\label{3eq44}
  G_\alpha(t)=\sum_{i=1}^3\left[G_{\alpha,i}^1(t)+G_{\alpha,i}^2(t)+G_{\alpha,i}^3(t)+G_{\alpha,i}^4(t)\right].
\end{equation}

\begin{lemma}\label{lem10}
  Let $g$ be a solution of the scaled Boltzmann-Fermi-Dirac equation \eqref{3eq32}. Then there exists a positive constant $\widetilde{C}$ independent of $\epsilon$ such that the following estimate holds:
  \begin{equation}\label{3eq45}
  \begin{aligned}
    &\epsilon\frac{\d}{\d t}\sum_{|\alpha|\leqslant N-1}G_\alpha(t)
    +\sum_{|\alpha|\leqslant N-1}\left\|\nabla_x \partial_x^{\alpha}(a,b,c)\right\|^2\\
    \leqslant &\widetilde{C}\left\{\frac{1}{\epsilon^2}\left(\mathcal{D}_N^{mic}\right)^2+\left(\mathcal{E}_N^2+\mathcal{E}_N^4\right) \left(\left(\mathcal{D}_N^{mac}\right)^2+\left(\mathcal{D}_N^{mic}\right)^2\right)\right\}
  \end{aligned}
  \end{equation}
  for any $t\in [0,T]$.
\end{lemma}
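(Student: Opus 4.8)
The plan is to run the classical ``thirteen‑moment'' macroscopic estimate of the nonlinear energy method, adapted to the Fermi--Dirac background: I would combine the hyperbolic structure of the macroscopic system \eqref{3eq31}--\eqref{3eq35} with the conservation laws \eqref{3eq37}, and use the functional $G_\alpha(t)$ of \eqref{3eq43}--\eqref{3eq44} as an interaction‑type corrector whose time derivative removes the $\partial_t r^{(\cdot)}$ terms produced along the way. Fix $|\alpha|\leqslant N-1$. The whole argument is organized around one observation: after applying $\partial_x^\alpha$, pairing against the appropriate derivative of $(a,b,c)$, and integrating by parts, every error term belongs to one of three classes: (i) terms carrying a factor $\|\partial_x^\beta\{\mathbf I-\mathbf P\}g\|$ ($|\beta|\leqslant N$) or $\|\partial_x^\beta[r^{(\cdot)},m^{(\cdot)},\ell^{(\cdot)}]\|$, which by Lemmas \ref{lem8} and \ref{lem9} (together with the fact that each $r^{(\cdot)}$ is a plain velocity moment of $\{\mathbf I-\mathbf P\}g$, hence admits one extra $x$‑derivative) are bounded by $\tfrac{C}{\epsilon^2}(\mathcal D_N^{mic})^2$, using $\epsilon<1$ to raise the $\tfrac1\epsilon$ in front of the $\tfrac1{\epsilon^2}\ell^{(\cdot)}$ terms; (ii) terms carrying a factor $\|\partial_x^\beta h^{(\cdot)}\|$, bounded by $C(\mathcal E_N^2+\mathcal E_N^4)\big((\mathcal D_N^{mac})^2+(\mathcal D_N^{mic})^2\big)$ via \eqref{3eq41}; (iii) terms carrying a factor $\|\nabla_x\partial_x^\alpha(a,b,c)\|$, which always comes multiplied by a prefactor that can be made $\leqslant\eta$ for a free small $\eta>0$, hence is absorbed, through \eqref{1eq16}, into $\sum_{|\alpha|\leqslant N-1}\|\nabla_x\partial_x^\alpha(a,b,c)\|^2$ on the left of \eqref{3eq45}.

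First I would do the easy pair. For $\nabla_x c$: multiply $\partial_x^\alpha$ of \eqref{3eq35} by $\epsilon\,\partial_i\partial_x^\alpha c$ and integrate in $x$, obtaining
\[
\|\partial_i\partial_x^\alpha c\|^2=-\epsilon\big\langle\partial_i\partial_x^\alpha c,\partial_x^\alpha\partial_t r_i^{(3)}\big\rangle+\big\langle\partial_i\partial_x^\alpha c,\partial_x^\alpha m_i^{(3)}\big\rangle+\tfrac1\epsilon\big\langle\partial_i\partial_x^\alpha c,\partial_x^\alpha\ell_i^{(3)}\big\rangle+\epsilon\big\langle\partial_i\partial_x^\alpha c,\partial_x^\alpha h_i^{(3)}\big\rangle .
\]
The first term is $-\epsilon\frac{\d}{\d t}G_{\alpha,i}^3(t)+\epsilon\langle\partial_i\partial_x^\alpha\partial_t c,\partial_x^\alpha r_i^{(3)}\rangle$, and in the remainder I substitute $\epsilon\,\partial_t c$ from the third line of \eqref{3eq37} and integrate by parts once more to move $\partial_i$ onto $r_i^{(3)}$ (allowed since $|\alpha|+1\leqslant N$); all the leftover terms are of type (i)--(iii). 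For $\nabla_x a$: write $\partial_x^\alpha$ of \eqref{3eq32} as $\tfrac1\epsilon\partial_i\partial_x^\alpha a=-\partial_x^\alpha\partial_t b_i+\partial_x^\alpha R_i^{(1)}$ with $R_i^{(1)}=-\partial_t r_i^{(1)}+\tfrac1\epsilon m_i^{(1)}+\tfrac1{\epsilon^2}\ell_i^{(1)}+h_i^{(1)}$, multiply by $\epsilon\,\partial_i\partial_x^\alpha a$ and integrate; the $\partial_t b_i$ term gives $-\epsilon\frac{\d}{\d t}G_{\alpha,i}^1+\epsilon\langle\partial_i\partial_x^\alpha\partial_t a,\partial_x^\alpha b_i\rangle$ and the $\partial_t r_i^{(1)}$ term gives $-\epsilon\frac{\d}{\d t}G_{\alpha,i}^2+\epsilon\langle\partial_i\partial_x^\alpha\partial_t a,\partial_x^\alpha r_i^{(1)}\rangle$, and in both remainders I replace $\epsilon\,\partial_t a$ from the first line of \eqref{3eq37} and integrate by parts.

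The hard part will be the $\nabla_x b$ estimate. For fixed $i$, split $\|\nabla_x\partial_x^\alpha b_i\|^2=\|\partial_i\partial_x^\alpha b_i\|^2+\sum_{j\neq i}\|\partial_j\partial_x^\alpha b_i\|^2$; the diagonal piece comes from \eqref{3eq33} (with $i$ a free index, so $\partial_i\partial_x^\alpha b_i=\epsilon\,\partial_x^\alpha(-\partial_t c+R_i^{(2)})$), and the off‑diagonal piece from \eqref{3eq34} via $\partial_j b_i=(\partial_i b_j+\partial_j b_i)-\partial_i b_j$ together with $\langle\partial_i b_j,\partial_j b_i\rangle=\langle\partial_i b_i,\partial_j b_j\rangle$ (two integrations by parts). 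After summing over $i$ and discarding a favorable nonpositive term $-\|\nabla_x\cdot b\|^2$, the quantity $\sum_i\|\nabla_x\partial_x^\alpha b_i\|^2$ is bounded by a finite sum of brackets of the form $\epsilon\langle\partial_x^\alpha R_i^{(2)},\partial_i\partial_x^\alpha b_i\rangle$, $\epsilon\langle\partial_x^\alpha R_{ij}^{(2)},\partial_j\partial_x^\alpha b_i\rangle$ and $\epsilon\langle\partial_x^\alpha\partial_t c,\partial_i\partial_x^\alpha b_i\rangle$; the last is dealt with through \eqref{3eq37}, and in the $R_i^{(2)}$ and $R_{ij}^{(2)}$ brackets the $-\partial_t r^{(2)}_{\bullet}$ parts are the combination reorganized by $G_{\alpha,i}^4$ — each contributes $-\epsilon\frac{\d}{\d t}$ of the corresponding bracket of $G_{\alpha,i}^4$ plus a remainder $\epsilon\langle r^{(2)}_{\bullet},\partial\partial_x^\alpha\partial_t b_i\rangle$, into which I substitute $\epsilon\,\partial_t b_i=-\partial_i a+\epsilon R_i^{(1)}$ from \eqref{3eq32}; the resulting second‑order piece $\langle r^{(2)}_{\bullet},\partial_i\partial\partial_x^\alpha a\rangle$ is integrated by parts to $\langle\partial\partial_x^\alpha r^{(2)}_{\bullet},\partial_i\partial_x^\alpha a\rangle$ and split by Cauchy--Schwarz. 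Everything that remains is again of type (i)--(iii).

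Finally I would sum the three estimates over $i=1,2,3$ and $|\alpha|\leqslant N-1$: by \eqref{3eq44} the total‑time‑derivative contributions assemble into $-\epsilon\frac{\d}{\d t}\sum_{|\alpha|\leqslant N-1}G_\alpha(t)$; the finitely many type‑(iii) remainders are absorbed into $\tfrac12\sum_{|\alpha|\leqslant N-1}\|\nabla_x\partial_x^\alpha(a,b,c)\|^2$ once $\eta$ is taken small relative to the constant in \eqref{1eq16}; and the type‑(i) and type‑(ii) remainders are bounded by $\widetilde C\big\{\tfrac1{\epsilon^2}(\mathcal D_N^{mic})^2+(\mathcal E_N^2+\mathcal E_N^4)\big((\mathcal D_N^{mac})^2+(\mathcal D_N^{mic})^2\big)\big\}$ with $\widetilde C$ independent of $\epsilon$, which is precisely \eqref{3eq45}. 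The main obstacle is exactly the $\nabla_x b$ step: one has to juggle the three linearly dependent diagonal equations \eqref{3eq33} and the off‑diagonal equations \eqref{3eq34} simultaneously, verify that the $\partial_t r^{(2)}$ and $\partial_t r_{ij}^{(2)}$ pieces recombine into $-\epsilon\frac{\d}{\d t}G_{\alpha,i}^4$ up to controllable remainders, and be careful that each $\tfrac1{\epsilon^2}\ell^{(\cdot)}$‑factor lands under $\tfrac1{\epsilon^2}(\mathcal D_N^{mic})^2$ rather than onto the gradient term.
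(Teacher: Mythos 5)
Your proposal is essentially the paper's proof: the same interaction functional $G_\alpha$, the same division into separate estimates for $\nabla_x a$, $\nabla_x b$, $\nabla_x c$, the same device of pulling out total time derivatives and then eliminating $\partial_t(a,b,c)$ via the conservation laws, and the same appeal to Lemmas \ref{lem8}--\ref{lem9} plus absorption of the $\eta\|\nabla_x\partial_x^\alpha(a,b,c)\|^2$ remainders. The only structural difference is in the $b$-step: the paper first assembles \eqref{3eq33}--\eqref{3eq34} into the explicit elliptic equation \eqref{3eq46} for $b_j$ and pairs it with $\partial_x^\alpha b_j$, while you work directly with the symmetric-gradient components and the identity $\left\langle \partial_i b_j,\partial_j b_i\right\rangle=\left\langle\partial_i b_i,\partial_j b_j\right\rangle$; these are the same computation in weak form, and your version is fine.

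One step as written would fail, though the fix is immediate. In the $b$-estimate you propose to handle the remainder $\epsilon\left\langle r^{(2)}_{\bullet},\partial\,\partial_x^\alpha\partial_t b_i\right\rangle$ by substituting $\epsilon\,\partial_t b_i=-\partial_i a+\epsilon R_i^{(1)}$ from \eqref{3eq32}. But $R_i^{(1)}$ contains $-\partial_t r_i^{(1)}$, a \emph{time} derivative of a velocity moment of $\{\mathbf{I}-\mathbf{P}\}g$, which is controlled by neither $\mathcal{E}_N$ nor $\mathcal{D}_N^{mic}$ (the dissipation only sees spatial derivatives), so this term cannot be placed in any of your classes (i)--(iii) and the estimate does not close there. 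The correct move --- which you yourself make for the $\partial_t a$ and $\partial_t c$ remainders --- is to substitute $\partial_t b_i$ from the second line of the local conservation laws \eqref{3eq37}, which expresses it purely through $\nabla_x(a,c)$ and spatial divergences of moments of $\{\mathbf{I}-\mathbf{P}\}g$, exactly as the paper does. With that single substitution corrected, your argument is the paper's proof and yields \eqref{3eq45}.
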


\begin{proof}
  We fix a constant $\eta \in (0,1)$ to be determined later and let $|\alpha|\leqslant N-1$. Note that from \eqref{3eq31}-\eqref{3eq35}, we deduce $b=(b_1,b_2,b_3)$ satisfies the following elliptic-type equation:
  \begin{equation}\label{3eq46}
  \begin{aligned}
        -\Delta_{x} b_{j}-\partial_{j} \partial_{j} b_{j}=& \sum_{i \neq j} \partial_{j}\left[-\epsilon\partial_{t} r_{i}^{(2)}+m_{i}^{(2)}+\frac{1}{\epsilon}\ell_{i}^{(2)}+\epsilon h_{i}^{(2)}\right]\\
        &-\sum_{i \neq j} \partial_{i}\left[-\epsilon\partial_{t} r_{i j}^{(2)} +m_{ij}^{(2)}+\frac{1}{\epsilon}\ell_{ij}^{(2)}+\epsilon h_{i j}^{(2)}\right]-2 \partial_{j}\left[-\epsilon\partial_{t} r_{j}^{(2)}+m_{j}^{(2)}+\frac{1}{\epsilon}\ell_{j}^{(2)}+\epsilon h_{j}^{(2)}\right].
    \end{aligned}
  \end{equation}
  Thus we start with estimates on $b$.

  {\it Estimates on $b$.}
  Applying $\partial_{x}^{\alpha}$ to \eqref{3eq46}, then taking inner product with $\partial_{x}^{\alpha}b_j$ in $L^2(\d x)$,
  \begin{equation}\label{3eq47}
    \begin{aligned}
    &\left\|\nabla_{x} \partial_{x}^{\alpha} b_{j}\right\|^{2}+\left\|\partial_{j} \partial_{x}^{\alpha} b_{j}\right\|^{2}=\epsilon R_{r,b}-\sum_{i \neq j}\left(\partial_{x}^{\alpha}\left[\frac{1}{\epsilon}\ell_{i}^{(2)}+m_{i}^{(2)}+ \epsilon h_{i}^{(2)}\right], \partial_{j} \partial_{x}^{\alpha} b_{j}\right)\\
    +&\sum_{i \neq j} \left(\partial_{x}^{\alpha}\left[\frac{1}{\epsilon}\ell_{ij}^{(2)}+m_{ij}^{(2)}+ \epsilon h_{i j}^{(2)}\right], \partial_{i} \partial_{x}^{\alpha} b_{j}\right)+ 2\left(\partial_{x}^{\alpha}\left[\frac{1}{\epsilon}\ell_{j}^{(2)}+m_{j}^{(2)}+\epsilon h_{j}^{(2)}\right], \partial_{j} \partial_{x}^{\alpha} b_{j}\right).
    \end{aligned}
    \end{equation}
  Integrating by parts
  \begin{equation*}
    \begin{aligned}
    &\epsilon R_{r,b}=-\epsilon\left(\partial_{t}\left[\sum_{i \neq j} \partial_{j} \partial_{x}^{\alpha} r_{i}^{(2)}-\sum_{i \neq j} \partial_{i} \partial_{x}^{\alpha} r_{i j}^{(2)}-2 \partial_{j} \partial_{x}^{\alpha} r_{j}^{(2)}\right], \partial_{x}^{\alpha} b_{j}\right)\\
    &=-\epsilon\frac{d}{d t}\left(\sum_{i \neq j} \partial_{j} \partial_{x}^{\alpha} r_{i}^{(2)}-\sum_{i \neq j} \partial_{i} \partial_{x}^{\alpha} r_{i j}^{(2)}-2 \partial_{j} \partial_{x}^{\alpha} r_{j}^{(2)}, \partial_{x}^{\alpha} b_{j}\right)\\
    &+\epsilon\left(\sum_{i \neq j} \partial_{j} \partial_{x}^{\alpha} r_{i}^{(2)}-\sum_{i \neq j} \partial_{i} \partial_{x}^{\alpha} r_{i j}^{(2)}-2 \partial_{j} \partial_{x}^{\alpha} r_{j}^{(2)}, \partial_{x}^{\alpha} \partial_{t} b_{j}\right).
    \end{aligned}
  \end{equation*}

  Note the first term on the right hand side above is just $-\epsilon\frac{\d}{\d t}G_{\alpha,j}^4(t)$, and from the conservation law \eqref{3eq37}, the second term is bounded by
\begin{equation*}
    \begin{aligned}
    &\eta\epsilon^2\left\|\partial_{x}^{\alpha} \partial_{t} b_{j}\right\|^{2}+\frac{1}{4 \eta}\left\|\sum_{i \neq j} \partial_{j} \partial_{x}^{\alpha} r_{i}^{(2)}-\sum_{i \neq j} \partial_{i} \partial_{x}^{\alpha} r_{i j}^{(2)}-2 \partial_{j} \partial_{x}^{\alpha} r_{j}^{(2)}\right\|^{2} \\
    &\leqslant  \eta\left\|\partial_{x}^{\alpha} \partial_{j}(a+\frac{p_4^1}{p_2^1}c)+\partial_{x}^{\alpha} \nabla_{x} \cdot\left( vv_{i} , \{\mathbf{I}-\mathbf{P}\}g\right)\right\|^{2}+\frac{C}{\eta}\left[\sum_{i \neq j}\left\|\partial_{j} \partial_{x}^{\alpha} r_{i}^{(2)}\right\|^{2}+\sum_{i \neq j}\left\|\partial_{i} \partial_{x}^{\alpha} r_{i j}^{(2)}\right\|^{2}+\left\|\partial_{j} \partial_{x}^{\alpha}r_{j}^{(2)}\right\|^{2}\right]\\
    &\leqslant  C \eta\left[\left\|\partial_{j} \partial_{x}^{\alpha}(a, c)\right\|^{2}+\left\| \partial_{x}^{\alpha} \nabla_{x} \cdot\left( vv_{i} , \{\mathbf{I}-\mathbf{P}\}g\right)\right\|^{2}\right]+\frac{C}{\eta} \sum_{i j}\left\|\nabla_{x} \partial_{x}^{\alpha}\left[r_{i}^{(2)}, r_{i j}^{(2)}, r_{j}^{(2)}\right]\right\|^{2}.
    \end{aligned}
\end{equation*}
The sum of the rest terms on the right hand side of \eqref{3eq47} is bounded by
\begin{equation*}
    \begin{aligned}
        &\eta\left\|\nabla_{x} \partial_{x}^{\alpha} b_{j}\right\|^{2}+\frac{C}{\eta}\left\{\sum_{i \neq j} \left\|\partial_{x}^{\alpha}\left[\frac{1}{\epsilon}\ell_{i}^{(2)}+m_{i}^{(2)}+\epsilon h_{i}^{(2)}\right]\right\|^{2}+\right.\\
        &\left.\hspace{3cm}\sum_{i \neq j} \left\|\partial_{x}^{\alpha}\left[\frac{1}{\epsilon}\ell_{ij}^{(2)}+m_{ij}^{(2)}+ \epsilon h_{i j}^{(2)}\right]\right\|^{2} +\left\|\partial_{x}^{\alpha}\left[\frac{1}{\epsilon}\ell_{ij}^{(2)}+m_{ij}^{(2)}+ \epsilon h_{i j}^{(2)} \right]\right\|^{2}\right\} \\
        &\leqslant \eta\left\|\nabla_{x} \partial_{x}^{\alpha} b_{j} \right\|^{2}+\frac{C}{\eta} \sum_{i j} \left\|\partial_{x}^{\alpha}\left[\frac{1}{\epsilon}\ell_{i}^{(2)}, \frac{1}{\epsilon}\ell_{ij}^{(2)}, m_{i}^{(2)}, m_{ij}^{(2)},\epsilon h_{i}^{(2)},\epsilon h_{i j}^{(2)}\right]\right\|^{2}.
    \end{aligned}
\end{equation*}
Putting all estimates into \eqref{3eq47}, by Lemmas \ref{lem8} and \ref{lem9} we get
\begin{equation}\label{3eq48}
\begin{aligned}
    \epsilon\frac{d}{d t} \sum_{j=1}^{3} G_{\alpha,j}^4(t)+\left\|\nabla_{x} \partial_{x}^{\alpha} b\right\|^{2} \leqslant&  C \eta \left\{\left\|\nabla_{x} \partial_{x}^{\alpha}(a, b, c)\right\|^{2}+ \left\|\partial_{x}^{\alpha}\{\mathbf{I}-\mathbf{P}\}g\right\|^{2}\right\}\\
    +&\frac{C}{\eta}\left\{\frac{1}{\epsilon^2}\left(\mathcal{D}_N^{mic}\right)^2+\left(\mathcal{E}_N^2+\mathcal{E}_N^4\right) \left(\left(\mathcal{D}_N^{mac}\right)^2+\left(\mathcal{D}_N^{mic}\right)^2\right)\right\}.
\end{aligned}
\end{equation}

{\it Estimates on c.} For each $i=1,2,3$, it follows from \eqref{3eq35} that
\begin{equation}\label{3eq49}
    \begin{aligned}
    &\left\|\partial_{i} \partial_{x}^{\alpha} c\right\|^{2}\\
    =&\epsilon\left(\partial_{i} \partial_{x}^{\alpha} c, \partial_{i} \partial_{x}^{\alpha} c\right) \\
    =&-\epsilon\left(\partial_{t} \partial_{x}^{\alpha} r_{i}^{(3)}, \partial_{i} \partial_{x}^{\alpha}c\right) +\left(\partial_{x}^{\alpha}\left[\frac{1}{\epsilon}\ell_{i}^{(3)}+m_{i}^{(3)}+ \epsilon h_{i}^{(3)}\right], \partial_{i} \partial_{x}^{\alpha} c\right) \\
    =&-\epsilon\frac{d}{d t}\left(\partial_{x}^{\alpha} r_{i}^{(3)}, \partial_{i} \partial_{x}^{\alpha} c\right)+\epsilon\left(\partial_{x}^{\alpha} r_{i}^{(3)}, \partial_{i} \partial_{x}^{\alpha} \partial_{t} c\right)+\left(\partial_{x}^{\alpha}\left[\frac{1}{\epsilon}\ell_{i}^{(3)}+m_{i}^{(3)}+ \epsilon h_{i}^{(3)}\right], \partial_{i} \partial_{x}^{\alpha} c\right).
    \end{aligned}
\end{equation}

The first term on the right hand side above is just $-\epsilon\frac{\d}{\d t}G_{\alpha,i}^3(t)$. From the conservation law \eqref{3eq37}, the second term is bounded by
\begin{equation*}
    \begin{aligned}
   & -\epsilon\left(\partial_{i} \partial_{x}^{\alpha} r_{i}^{(3)}, \partial_{x}^{\alpha} \partial_{t} c\right)\\
    & \leqslant \eta\epsilon^2\left\|\partial_{x}^{\alpha} \partial_{t} c\right\|^{2}+\frac{1}{4 \eta}\left\|\partial_{i} \partial_{x}^{\alpha} r_{i}^{(3)}\right\|^{2} \\
    &\leqslant \eta\left\|\frac{1}{3}\partial_{x}^{\alpha}\nabla\cdot b+\frac{p_0^1}{p_0^1p_4^1-(p_2^1)^2}\partial_{x}^{\alpha}\nabla\cdot\left(|v|^{2} v , \{\mathbf{I}-\mathbf{P}\}g\right) \right\|^{2}+\frac{1}{4 \eta}\left\|\partial_{i} \partial_{x}^{\alpha} r_{i}^{(3)}\right\|^{2} \\
    &\leqslant C \eta\left\|\partial_{x}^{\alpha} \nabla_{x} \cdot b\right\|^{2}+C \eta\left\|\partial_{x}^{\alpha}\nabla\cdot\left(|v|^{2} v , \{\mathbf{I}-\mathbf{P}\}g\right)\right\|^{2}+\frac{1}{4 \eta}\left\|\partial_{i} \partial_{x}^{\alpha} r_{i}^{(3)}\right\|^{2}.
\end{aligned}
\end{equation*}
The third term is bounded by
\begin{equation*}
\eta\left\|\partial_{i} \partial_{x}^{\alpha} c\right\|^{2} +\frac{1}{4\eta}\left\|\partial_{x}^{\alpha}\left[\frac{1}{\epsilon}\ell_{i}^{(3)}+m_{i}^{(3)}+ \epsilon h_{i}^{(3)}\right]\right\|^{2}.
\end{equation*}
Note Lemmas \ref{lem8} and \ref{lem9}, plugging estimates on the three terms into \eqref{3eq49}, then taking summation for $i$ over $i \in\{1,2,3\}$ yields
\begin{equation}\label{3eq50}
    \begin{aligned}
    \epsilon\frac{d}{d t} G_\alpha^3(t)+\left\|\nabla_{x} \partial_{x}^{\alpha} c\right\|^{2} \leqslant& C \eta \left\{\left\|\nabla_{x} \partial_{x}^{\alpha}(b, c)\right\|^{2}+ \left\|\partial_{x}^{\alpha} \nabla_{x}\{\mathbf{I}-\mathbf{P}\}g\right\|^{2}\right\}\\
    &+\frac{C}{\eta}\left\{\frac{1}{\epsilon^2}\left(\mathcal{D}_N^{mic}\right)^2+\left(\mathcal{E}_N^2+\mathcal{E}_N^4\right) \left(\left(\mathcal{D}_N^{mac}\right)^2+\left(\mathcal{D}_N^{mic}\right)^2\right)\right\}.
\end{aligned}
\end{equation}

{\it Estimates on $a$.} For each $i=1,2,3$, it follows from \eqref{3eq32} that
\begin{equation}\label{3eq51}
    \begin{aligned}
    &\left\|\partial_{i} \partial_{x}^{\alpha} a\right\|^{2}
    =\left(-\epsilon\partial_{x}^{\alpha}\partial_{t} b_{i}-\epsilon\partial_{x}^{\alpha} \partial_{t}r_{i}^{(1)} +\partial_{x}^{\alpha}\left[\frac{1}{\epsilon}\ell_{i}^{(1)}+m_{i}^{(1)}+ \epsilon h_{i}^{(1)}\right], \partial_{i} \partial_{x}^{\alpha} a\right) \\
    =&-\epsilon\frac{d}{d t}\left[\left(\partial_{x}^{\alpha} b_{i}, \partial_{i} \partial_{x}^{\alpha} a\right)+\left(\partial_{x}^{\alpha} r_{i}^{(1)}, \partial_{i} \partial_{x}^{\alpha} a\right)\right] \\
    +&\epsilon\left(\partial_{x}^{\alpha} b_{i}, \partial_{i} \partial_{x}^{\alpha} \partial_{t} a\right)+\epsilon\left(\partial_{x}^{\alpha} r_{i}^{(1)}, \partial_{i} \partial_{x}^{\alpha} \partial_{t} a\right)+\left(\partial_{x}^{\alpha}\left[\frac{1}{\epsilon}\ell_{i}^{(1)}+m_{i}^{(1)}+ \epsilon h_{i}^{(1)}\right], \partial_{i} \partial_{x}^{\alpha} a\right).
\end{aligned}
\end{equation}
Similarly as before, we need to estimate the four terms on the right-hand side of \eqref{3eq51}. The first term is $-\epsilon\frac{\d}{\d t}\left[G_\alpha^1(t)+G_\alpha^2(t)\right]$. From the conservation law \eqref{3eq37},
\begin{equation*}
    \begin{aligned}
    &\epsilon\left(\partial_{x}^{\alpha} \partial_{i} b_{i}, \partial_{x}^{\alpha} \partial_{t} a\right)+\epsilon\left(\partial_{x}^{\alpha} \partial_{i} r_{i}^{(1)}, \partial_{x}^{\alpha} \partial_{t} a\right)\\
    =&\left(\partial_{x}^{\alpha} \partial_{i} b_{i}, \frac{p_2^1}{p_0^1p_4^1-(p_2^1)^2}\partial_{x}^{\alpha} \nabla_{x} \cdot\left(|v|^{2} v , \{\mathbf{I}-\mathbf{P}\}g\right)\right)\\
    &+\left(\partial_{x}^{\alpha} \partial_{i} r_{i}^{(1)}, \frac{p_2^1}{p_0^1p_4^1-(p_2^1)^2} \partial_{x}^{\alpha} \nabla_{x} \cdot\left(|v|^{2} v , \{\mathbf{I}-\mathbf{P}\}g\right)\right) \\
    \leqslant & \eta\left(\left\|\partial_{x}^{\alpha} \partial_{i} b_{i}\right\|^{2}+\left\|\partial_{x}^{\alpha} \partial_{i} r_{i}^{(1)}\right\|^{2}\right)+\frac{C}{\eta}\left\|\partial_{x}^{\alpha} \nabla_{x} \cdot\left(|v|^{2} v , \{\mathbf{I}-\mathbf{P}\}g\right)\right\|^{2}.
\end{aligned}
\end{equation*}
The final term is bounded by
\begin{equation*}
    \eta\left\|\partial_{i} \partial_{x}^{\alpha} a\right\|^{2} +\frac{C}{\eta}\left\|\partial_{x}^{\alpha}\left[\frac{1}{\epsilon}\ell_{i}^{(1)}, m_{i}^{(1)} , \epsilon h_{i}^{(1)}\right]\right\|^{2}.
\end{equation*}
Collecting all the estimates above, by Lemmas \ref{lem8} and \ref{lem9} we get
\begin{equation}\label{3eq52}
    \begin{aligned}
    &\epsilon\frac{d}{d t}\left[G_\alpha^1(t)+G_\alpha^2(t)\right]+\left\|\nabla_{x} \partial_{x}^{\alpha} a\right\|^{2} \leqslant \eta\left\|\nabla_{x} \partial_{x}^{\alpha}(a, b)\right\|^{2}+\frac{C}{\eta}\left\|\partial_{x}^{\alpha} \nabla_{x}\{\mathbf{I}-\mathbf{P}\}g\right\|^{2}\\
    +&\frac{C}{\eta}\left\{\frac{1}{\epsilon^2}\left(\mathcal{D}_N^{mic}\right)^2+\left(\mathcal{E}_N^2+\mathcal{E}_N^4\right) \left(\left(\mathcal{D}_N^{mac}\right)^2+\left(\mathcal{D}_N^{mic}\right)^2\right)\right\}.
    \end{aligned}
\end{equation}

Finally we add up the inequalities \eqref{3eq48}, \eqref{3eq50} and \eqref{3eq52} and take summation for $\alpha$ over $|\alpha|\leqslant N-1$ to obtain
\begin{equation}\label{3eq53}
    \begin{aligned}
    \epsilon\frac{d}{d t} & \sum_{|\alpha| \leqslant N-1} \sum_{i=1}^{3}G_\alpha^i(t) +\sum_{|\alpha| \leqslant N-1}\left\|\nabla_{x} \partial_{x}^{\alpha}(a, b, c)\right\|^{2} \\
    \leqslant  C \eta& \sum_{|\alpha| \leqslant N-1}\left\|\nabla_{x} \partial_{x}^{\alpha}(a, b, c)\right\|^{2}+\frac{C}{\eta}\left\{\frac{1}{\epsilon^2}\left(\mathcal{D}_N^{mic}\right)^2+\left(\mathcal{E}_N^2+\mathcal{E}_N^4\right) \left(\left(\mathcal{D}_N^{mac}\right)^2+\left(\mathcal{D}_N^{mic}\right)^2\right)\right\}
    \end{aligned}
\end{equation}
We complete the proof by choosing $\eta \in (0,1)$, such that $C\eta =\frac{1}{2}$.
\end{proof}

\subsection{Uniform estimate and the global existence}
We derive the uniform energy estimate by the microscopic estimate \eqref{3eq1} and the macroscopic estimate \eqref{3eq45}. For constant $d >0$ suitably large to be determined later, we define
\begin{equation*}
    \mathcal{E}_{N,d}^2(t)\equiv d\, \mathcal{E}_{N}^2(t)+\epsilon\sum_{|\alpha|\leqslant N-1}G_\alpha(t),
\end{equation*}
multiplying \eqref{3eq1} by $d$, and then adding it to \eqref{3eq45}, since $\left(\mathcal{D}_N^{mac}\right)(g)$ is equivalent with $\sum\limits_{|\alpha| \leqslant N-1}\left\|\nabla_{x} \partial_{x}^{\alpha}(a, b, c)\right\|$, we have
\begin{equation*}
    \begin{aligned}
    \frac{\d}{\d t}\mathcal{E}_{N,d}^2
    +\left\{\frac{d}{\epsilon^2}\left(\mathcal{D}_N^{mic}\right)^2+\left(\mathcal{D}_N^{mac}\right)^2\right\}
    \leqslant &Cd \left\{\frac{1}{\epsilon}\left(\mathcal{E}_N+\mathcal{E}_N^2\right)\left(\mathcal{D}_N^{mic}\right)^2 +\left(\mathcal{E}_N^2+\mathcal{E}_N^4\right)\left(\mathcal{D}_N^{mac}\right)^2\right\}\\
    &+\widetilde{C}\left\{\frac{1}{\epsilon^2}\left(\mathcal{D}_N^{mic}\right)^2+\left(\mathcal{E}_N^2+\mathcal{E}_N^4\right) \left(\left(\mathcal{D}_N^{mac}\right)^2+\left(\mathcal{D}_N^{mic}\right)^2\right)\right\},
    \end{aligned}
  \end{equation*}
which is
\begin{equation}\label{3eq54}
    \begin{aligned}
    &\frac{\d}{\d t}\mathcal{E}_{N,d}^2
    +\left\{\frac{d-\widetilde{C}}{\epsilon^2}\left(\mathcal{D}_N^{mic}\right)^2+\left(\mathcal{D}_N^{mac}\right)^2\right\}\\
    \leqslant &\frac{Cd+\widetilde{C}}{\epsilon} \left(\mathcal{E}_N+\mathcal{E}_N^2+\mathcal{E}_N^4\right)\left(\mathcal{D}_N^{mic}\right)^2+ \left(\widetilde{C}+Cd\right)\left(\mathcal{E}_N^2+\mathcal{E}_N^4\right)\left(\mathcal{D}_N^{mac}\right)^2.
    \end{aligned}
  \end{equation}

We point out that for some constant $c_1,c_2>0$, there holds
\begin{equation}\label{3eq55}
  c_1\mathcal{E}_{N}(g)(t)\leqslant \mathcal{E}_{N,d}(g)(t)\leqslant c_2\mathcal{E}_{N}(g)(t).
\end{equation}
Indeed, this is due to
\begin{equation}\label{3eq56}
  \begin{aligned}
    \sum_{|\alpha|\leqslant N-1}|G_\alpha(t)|\leqslant &C \sum_{|\alpha|\leqslant N-1}\sum_{i=1}^3G_\alpha^i (t)\\
    \leqslant & C\sum_{|\alpha|\leqslant N-1} \left\|\partial_x^{\alpha}\{\mathbf{I}-\mathbf{P}\}g\right\|^{2}
    +C\sum_{|\alpha|\leqslant N-1}\left\|\partial_x^{\alpha}\mathbf{P}g\right\|^{2}\\
    \leqslant & C\mathcal{E}_N^2(g)(t).
  \end{aligned}
\end{equation}
Choosing $d >0$ suitably such that $d -\widetilde{C} > 0$, then the relation \eqref{3eq55} holds. Hence we prove the following theorem
\begin{theorem}\label{thGEE}
  If $g$ is a solution of the scaled Boltzmann-Fermi-Dirac equation \eqref{3eq32}, then there exists a constant $c_0>0$ independent of $\epsilon$ such that if $\mathcal{E}_{N,d}^2\leqslant 1$, then
  \begin{equation}\label{3eq57}
  \frac{\d}{\d t}\mathcal{E}_{N,d}^2+\frac{1}{\epsilon^2}\left(\mathcal{D}_N^{mic}\right)^2+\left(\mathcal{D}_N^{mac}\right)^2\leqslant c_0\mathcal{E}_{N,d}\left\{\frac{1}{\epsilon}\left(\mathcal{D}_N^{mic}\right)^2+\left(\mathcal{D}_N^{mac}\right)^2\right\}.
  \end{equation}
\end{theorem}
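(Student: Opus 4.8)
The plan is to read off the conclusion directly from inequality \eqref{3eq54}, which already bundles the microscopic estimate of Lemma \ref{lem7} (multiplied by the large constant $d$) together with the macroscopic estimate of Lemma \ref{lem10}; what remains is only a choice of $d$ and an elementary use of the smallness hypothesis $\mathcal{E}_{N,d}^2\leqslant 1$ together with the norm equivalence \eqref{3eq55}.

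First I would fix $d>0$ once and for all so large that $d-\widetilde{C}\geqslant 1$. This is legitimate since $d$, $\widetilde{C}$ and the constant $C$ in \eqref{3eq54} are all independent of $\epsilon$, and it is also compatible with (indeed stronger than) the requirement $d-\widetilde{C}>0$ used to establish \eqref{3eq55}. With this $d$, the bracket on the left of \eqref{3eq54} dominates $\frac{1}{\epsilon^2}(\mathcal{D}_N^{mic})^2+(\mathcal{D}_N^{mac})^2$, so \eqref{3eq54} implies
\[
\frac{\d}{\d t}\mathcal{E}_{N,d}^2+\frac{1}{\epsilon^2}\left(\mathcal{D}_N^{mic}\right)^2+\left(\mathcal{D}_N^{mac}\right)^2\leqslant \frac{Cd+\widetilde{C}}{\epsilon}\left(\mathcal{E}_N+\mathcal{E}_N^2+\mathcal{E}_N^4\right)\left(\mathcal{D}_N^{mic}\right)^2+\left(Cd+\widetilde{C}\right)\left(\mathcal{E}_N^2+\mathcal{E}_N^4\right)\left(\mathcal{D}_N^{mac}\right)^2 .
\]

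Then I would collapse the polynomial prefactors to a single power of $\mathcal{E}_{N,d}$. By \eqref{3eq55} one has $\mathcal{E}_N\leqslant c_1^{-1}\mathcal{E}_{N,d}$, and the hypothesis $\mathcal{E}_{N,d}^2\leqslant 1$ forces $\mathcal{E}_{N,d}\leqslant 1$, hence $\mathcal{E}_N\leqslant c_1^{-1}$ is bounded. Consequently $\mathcal{E}_N+\mathcal{E}_N^2+\mathcal{E}_N^4=\mathcal{E}_N(1+\mathcal{E}_N+\mathcal{E}_N^3)\leqslant C\mathcal{E}_N\leqslant C\mathcal{E}_{N,d}$, and similarly $\mathcal{E}_N^2+\mathcal{E}_N^4\leqslant C\mathcal{E}_N^2\leqslant C\mathcal{E}_{N,d}^2\leqslant C\mathcal{E}_{N,d}$, the last step using $\mathcal{E}_{N,d}\leqslant 1$. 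Substituting both bounds into the previous display and taking $c_0$ to be the larger of the two resulting constants produces exactly \eqref{3eq57}.

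There is no real obstacle here: all the analytic substance is already contained in Lemmas \ref{lem7}--\ref{lem10} and in the equivalence \eqref{3eq55}. The only subtlety worth flagging is that $d$ must be chosen large enough to serve two purposes at once — to make the coefficient of $\epsilon^{-2}(\mathcal{D}_N^{mic})^2$ on the left at least $1$, and to guarantee \eqref{3eq55} — while staying $\epsilon$-independent, which it does automatically. It is also essential that the right-hand sides of \eqref{3eq13} and \eqref{3eq45} are only quadratic in the dissipation rates, so that after extracting the single factor $\mathcal{E}_{N,d}$ what remains is precisely $\frac{1}{\epsilon}(\mathcal{D}_N^{mic})^2+(\mathcal{D}_N^{mac})^2$, with no surviving higher power of the dissipation.
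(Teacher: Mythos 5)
Your proposal is correct and follows essentially the same route as the paper, which derives \eqref{3eq54}--\eqref{3eq56} and then states Theorem \ref{thGEE} as an immediate consequence without writing out the remaining details. You have filled in exactly those details — choosing $d$ with $d-\widetilde{C}\geqslant 1$, using $\mathcal{E}_{N,d}\leqslant 1$ and the equivalence \eqref{3eq55} to replace the polynomial prefactors by a single factor of $\mathcal{E}_{N,d}$ — so there is nothing to add.
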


Now, with the above preparations, we are ready to prove Theorem \ref{th1} by the usual continuation arguments.

\begin{proof}[{\bf Proof of Theorem \ref{th1}}]
  Recall the constants $c_0, c_1, c_2$ in \eqref{3eq57} and \eqref{3eq55}, and $\delta_0$ appeared in Theorem \ref{lex}, we can define
  \begin{equation}\label{3eq58}
    M=\min\left\{\frac{c_1\delta_0}{2c_2},\frac{1}{2c_2},\frac{1}{4c_0c_2}\right\}.
  \end{equation}
  Let the initial data $g_{0,\epsilon}$ satisfies
  \begin{equation*}
    \mathcal{E}_N(0)=\left\|g_{0,\epsilon}\right\|_{H_x^NL_v^2}\leqslant M.
  \end{equation*}
   Then Theorem \ref{lex} shows that for some constant $T > 0$, there exists a solution $g\in L^{\infty}\left([0,T];\,H_x^NL_v^2\right)$ satisfying $\mathcal{E}_N(t)\leqslant 2M$ for $0 < t < T$. We further define
  \begin{equation*}
    T^*=\sup\left\{t\in \mathbb{R}^+\,\big|\,\mathcal{E}_N(t)\leqslant2M\frac{c_2}{c_1}\right\}>0.
  \end{equation*}
  Note that

  \begin{equation*}
    \mathcal{E}_{N,d}(t)\leqslant c_2\mathcal{E}_N(t)\leqslant 2c_2M<1, \quad\forall 0\leqslant t\leqslant T.
  \end{equation*}
  The global energy estimate \eqref{3eq57} implies that
  \begin{equation*}
    \frac{\d}{\d t}\mathcal{E}_{N,d}^2+ (1-2c_0c_2M)\left\{\frac{1}{\epsilon^2}\left(\mathcal{D}_N^{mic}\right)^2+\left(\mathcal{D}_N^{mac}\right)^2\right\}\leqslant 0,
  \end{equation*}
  from the choice of $M$ such that $1-2c_0c_2M > \frac{1}{2}$. Thus
  \begin{equation*}
    \mathcal{E}_{N,d}^2(T)+ \frac{1}{2}\int_{0}^{T}\left\{\frac{1}{\epsilon^2}\left(\mathcal{D}_N^{mic}\right)^2+\left(\mathcal{D}_N^{mac}\right)^2\right\}dt\leqslant \mathcal{E}_{N,d}^2(0),
  \end{equation*}
  which implies $\displaystyle\mathcal{E}_N(T)\leqslant \frac{c_2}{c_1}M$. Thus $T^*=+\infty$, and we finish the proof of Theorem \ref{th1}.
\end{proof}

\newpage

\section{Limit to Incompressible Navier-Stokes-Fourier Equations}\label{sec4}
\subsection{The limit from the global energy estimate}
First of all, we shall introduce some constants:
\begin{equation*}
  \begin{aligned}
  E_{0}=&\langle 1\rangle, &\quad E_{2}=&\left\langle\left|v_{1}\right|^{2}\right\rangle, \\
  E_{4}=&\left\langle\left|v_{1}\right|^{4}\right\rangle, &\quad E_{22}=&\left\langle\left|v_{1} v_{2}\right|^{2}\right\rangle,\\
  C_{A}=\left\langle\left(\frac{|v|^{2}}{2}-K_{A}\right)^{2} v_{1}^{2}\right\rangle_{1-2\mu}, &\quad K_A=&\frac{E_4+2E_{22}}{2E_2}.
  \end{aligned}
\end{equation*}
Based on Theorem \ref{th1}, there exists a $\delta_0 > 0$, such that for any given initial data
\begin{equation*}
  g_{\epsilon,0}(x,v)=\left\{\rho_0(x)+\mathrm{u}_0(x)\cdot v+\theta_0(x)\left(\frac{|v|^2}{2}-K_g\right)\right\}+\widetilde{g}_{\epsilon,0}(x,v),
\end{equation*}
satisfying
\begin{equation*}
  \left\|\left(\rho_{0}, \mathrm{u}_{0}, \theta_{0}\right)\right\|_{H_x^N} \leqslant \frac{\delta_0}{2C_{E_0,E_2,E_4}},
\end{equation*}
where $C_{E_0,E_2,E_4}>1$ is a constant and
\begin{equation*}
  \widetilde{g}_{\epsilon, 0} \in Null(L)^{\bot}, \quad\left\|\widetilde{g}_{\epsilon, 0}\right\|_{H_x^NL_v^2} \leqslant \frac{\delta_0}{2},
\end{equation*}
the scaled Boltzmann-Fermi-Dirac equation \eqref{1eq8} admits a global solution $g_{\epsilon}$, where $K_g=K_A-1$. Furthermore, there is a constant $C>0$ independent of $\epsilon$ such that the global energy estimate \eqref{1eq18} holds, thus
\begin{equation}\label{4eq1}
  \sup _{t \geqslant 0} \mathcal{E}_{N}^{2}(t)=\sup _{t \geqslant 0} \sum_{|\alpha| \leqslant N} \int_{\mathbb{R}_{x, v}^{6}}\left|\partial_{x}^{\alpha} g_{\epsilon}(t)\right|^{2} \mu(1-\mu)\d v \d  x \leqslant C,
\end{equation}
and
\begin{equation}\label{4eq2}
   \int_{0}^{\infty} \left(\mathcal{D}_{N}^{mic}\right)^{2}(t) \d  t=\sum_{|\alpha| \leqslant N} \int_{0}^{\infty} \int_{\mathbb{R}_{x, v}^{6}}\left|\partial_{x}^{\alpha}\{\mathbf{I}-\mathbf{P}\} g_{\epsilon}(t)\right|^{2} \nu\mu(1-\mu)\d v\d  x \d t \leqslant C \epsilon^{2},
\end{equation}
and
\begin{equation}\label{4eq3}
  \int_{0}^{\infty} \left(\mathcal{D}_{N}^{mac}\right)^{2}(t) \d  t=\sum_{0<|\alpha|\leqslant N} \int_{0}^{\infty} \int_{\mathbb{R}_{x, v}^{6}}\left|\partial_{x}^{\alpha} \mathbf{P} g_{\epsilon}(t)\right|^{2} \nu\mu(1-\mu)\d v \d x \d t \leq C.
\end{equation}

We deduce from the energy bound \eqref{4eq1} that there exists a $g_{0}$ in the functional space $L^{\infty}\left([0,+\infty) ; H^{N}\left(\d x ; L^{2}(\mu(1-\mu)\d v)\right)\right),$ such that
\begin{equation}\label{4eq4}
  g_{\epsilon} \rightarrow g_{0} \quad \text{ as }\quad \epsilon \rightarrow 0,
\end{equation}
where the convergence is weak-$\star$ for $t,$ strongly in $H^{N-\eta}\left(\d x\right)$ for any $\eta>0,$ and weakly in $L^{2}(\mu(1-\mu)\d v)$.

From the energy dissipation bound \eqref{4eq2} we have
\begin{equation}\label{4eq5}
  \{\mathbf{I}-\mathbf{P}\} g_{\epsilon} \rightarrow 0, \text{ in } L^{2}\left([0,+\infty) ; H^{N}\left(\d x ; L^{2}(\mu(1-\mu)\d v)\right)\right) \text{ as } \epsilon \rightarrow 0.
\end{equation}
By combining the convergence \eqref{4eq4} and \eqref{4eq5} we have $\{\mathbf{I}-\mathbf{P}\} g_{0}=0 .$ Thus, there exists $(\rho, \mathrm{u}, \theta) \in L^{\infty}\left([0,+\infty) ; H^{N}\left(\d x\right)\right),$ such that
\begin{equation*}
  g_{0}(t, x, v)=\rho(t, x)+\mathrm{u}(t, x) \cdot v+\theta(t, x)\left(\frac{|v|^{2}}{2}-K_g\right).
\end{equation*}

\subsection{The limiting equations}
If we write
\begin{equation*}
  A(v)=\left(\frac{|v|^2}{2}-K_A\right)v,\qquad B(v)=v\otimes v-\frac{|v|^2}{3}I,
\end{equation*}
then from \cite{Zakrevskiy}, there exist unique $A^\prime(v)=\Big(A_i^\prime(v)\Big)$, $B^\prime(v)= \Big(B_{ij}^\prime(v)\Big),\;i,j=1,2,3$, such that
\begin{equation*}
  L(A_i^\prime)=A_i^\prime,\qquad L(B_{ij}^\prime)=B_{ij}.
\end{equation*}
Moreover,
\begin{equation*}
  A^\prime(v)=-\alpha_{L}(|v|)A(v),\qquad B^\prime(v)=-\beta_{L}(|v|)B(v).
\end{equation*}
for some positive functions $\alpha_{L}(|v|),\;\beta_{L}(|v|)$.

Now we define the fluid variables as follows:
\begin{equation*}
\left\{
  \begin{aligned}
  &\rho_{\epsilon}=\frac{1}{K_AE_0-3E_2/2}\left\langle g_{\epsilon}, \left[1+\left(\frac{2E_0}{3E_2}K_g-1\right)\frac{|v|^2}{2}\right]\right\rangle, \\
  &\mathrm{u}_{\epsilon}=\frac{1}{E_2}\left\langle g_{\epsilon}, v\right\rangle, \\
  &\theta_{\epsilon}=\frac{1}{K_AE_0-3E_2/2}\left\langle g_{\epsilon}, \left(\frac{E_0}{3E_2}|v|^{2}-1\right)\right\rangle.
  \end{aligned}
\right.
\end{equation*}
Then it follows from \eqref{4eq4} that
\begin{equation}\label{4eq6}
  \left(\rho_{\epsilon}, \mathrm{u}_{\epsilon}, \theta_{\epsilon}\right) \rightarrow(\rho, \mathrm{u}, \theta) \text{ as } \epsilon \rightarrow 0,
\end{equation}
where the convergence is weak-$\star$ for $t,$ strongly in $H^{N-\eta}\left(\d x\right)$ for any $\eta>0$.

Taking inner products with the Boltzmann-Fermi-Dirac equation \eqref{1eq8} in $L_{v}^{2}$ by
\begin{equation*}
  \frac{1+\left(\frac{2E_0}{3E_2}K_g-1\right)\frac{|v|^2}{2}}{K_AE_0-3E_2/2}, \qquad \frac{v}{E_2}, \qquad \frac{\frac{E_0}{3E_2}|v|^{2}-1}{K_AE_0-3E_2/2}
\end{equation*}
respectively gives the local conservation laws:
\begin{equation}\label{4eq7}
  \left\{
  \begin{aligned}
  &\partial_{t} \rho_{\epsilon}+\frac{2K_g}{3\epsilon}\nabla_{x} \cdot \mathrm{u}_{\epsilon}+\frac{\frac{2E_0}{3E_2}K_g-1}{K_AE_0-3E_2/2}\nabla_{x} \cdot \left\langle A^\prime, \frac{1}{\epsilon} L g_{\epsilon}\right\rangle=0, \\
  &\partial_{t} \mathrm{u}_{\epsilon}+\frac{1}{\epsilon} \nabla_{x}\left(\rho_{\epsilon}+\theta_{\epsilon}\right)+\frac{1}{E_2}\nabla_{x} \cdot\left\langle B^\prime, \frac{1}{\epsilon} L g_{\epsilon}\right\rangle=0, \\
  &\partial_{t} \theta_{\epsilon}+\frac{2}{3\epsilon}\nabla_{x} \cdot \mathrm{u}_{\epsilon}+\frac{2E_0/3E_2}{K_AE_0-3E_2/2} \nabla_{x} \cdot\left\langle A^\prime, \frac{1}{\epsilon} L g_{\epsilon}\right\rangle=0.
  \end{aligned}
  \right.
\end{equation}
Here we use the properties of $A,B,A^\prime\text{ and }B^\prime$, the self-adjointness of $L$, to obtain the following calculations:
\begin{equation*}
  \begin{aligned}
  \left\langle v\cdot\nabla_x g_\epsilon,v\right\rangle
  &=\nabla_x\cdot\left\langle B,g_\epsilon\right\rangle+\nabla_x \left\langle \frac{|v|^2}{3}g_\epsilon, 1\right\rangle\\
  &=\nabla_x\cdot\left\langle LB^\prime,g_\epsilon\right\rangle +E_2\nabla_x(\rho_\epsilon+\theta_\epsilon)\\
  &=\nabla_x\cdot\left\langle B^\prime,Lg_\epsilon\right\rangle +E_2\nabla_x(\rho_\epsilon+\theta_\epsilon),
  \end{aligned}
\end{equation*}
and
\begin{equation*}
  \begin{aligned}
  &\left\langle v\cdot\nabla_x g_\epsilon,\frac{1}{K_AE_0-3E_2/2}\left(\frac{E_0}{3E_2}|v|^{2}-1\right) \right\rangle\\
  =&\frac{1}{K_AE_0-3E_2/2}\nabla_x\cdot\left\langle g_\epsilon, \frac{2E_0}{3E_2}\left(\frac{|v|^{2}}{2}-K_A\right)v+ \left(\frac{2E_0}{3E_2}K_A-1\right)v\right\rangle\\
  =&\frac{2E_0/3E_2}{K_AE_0-3E_2/2}\nabla_x\cdot\left\langle A,g_\epsilon\right\rangle +\frac{2}{3}\nabla_x\cdot\mathrm{u}_{\epsilon}\\
  =&\frac{2E_0/3E_2}{K_AE_0-3E_2/2} \nabla_{x} \cdot\left\langle A^\prime, L g_{\epsilon}\right\rangle +\frac{2}{3}\nabla_x\cdot\mathrm{u}_{\epsilon}.
  \end{aligned}
\end{equation*}

{\bf Incompressibility and Boussinesq relation. } From the first equation of \eqref{4eq7},
\begin{equation*}
  \frac{2K_g}{3} \nabla_{x} \cdot \mathrm{u}_{\epsilon}=-\epsilon\partial_{t} \rho_{\epsilon}-\frac{\frac{2E_0}{3E_2}K_g-1}{K_AE_0-3E_2/2}\nabla_{x} \cdot \left\langle LA^\prime, \{\mathbf{I}- \mathbf{P}\}g_{\epsilon}\right\rangle.
\end{equation*}
From the global energy bound \eqref{4eq1} and the global energy dissipation \eqref{4eq2}, it is easy to deduce
\begin{equation}\label{4eq8}
  \nabla_x\cdot \mathrm{u}_{\epsilon}\rightarrow 0 \text{ in the sense of distributions as } \epsilon\rightarrow 0.
\end{equation}
By combining with the convergence \eqref{4eq6}, we have
\begin{equation}\label{4eq9}
  \nabla_{x} \cdot \mathrm{u}=0.
\end{equation}
From the second equation of \eqref{4eq7},
\begin{equation*}
    \nabla_{x}\left(\rho_{\epsilon}+\theta_{\epsilon}\right)=-\epsilon \partial_{t} \mathrm{u}_{\epsilon}-\frac{1}{E_2}\nabla_{x} \cdot\left(LB^\prime,\{\mathbf{I}-\mathbf{P}\} g_{\epsilon}\right).
\end{equation*}
From the global energy dissipation \eqref{4eq2}, it follows that
\begin{equation}\label{4eq10}
  \nabla_{x}\left(\rho_{\epsilon}+\theta_{\epsilon}\right) \rightarrow 0 \text{ in the sense of distributions as } \epsilon \rightarrow 0,
\end{equation}
which gives the Boussinesq relation
\begin{equation}\label{4eq11}
    \nabla_{x}(\rho+\theta)=0.
\end{equation}

{\bf Convergence of $\displaystyle\frac{K_g\theta_\epsilon-\rho_\epsilon}{K_g+1}$.} The third equation times $K_g$ and then minus  the first equation in \eqref{4eq7} gives
\begin{equation}\label{4eq12}
  \partial_t\left(\frac{K_g\theta_\epsilon-\rho_\epsilon}{K_g+1}\right) +\frac{1}{K_A(K_AE_0-3E_2/2)}\nabla_{x} \cdot \left\langle A^\prime, \frac{1}{\epsilon} L g_{\epsilon}\right\rangle=0.
\end{equation}
From the global energy estimate \eqref{4eq1}, we have that
\begin{equation*}
  \left\|\frac{K_g\theta_\epsilon-\rho_\epsilon}{K_g+1}(t)\right\|_{H^N(\d x)}\leqslant C \text{ for almost every } t\in [0,+\infty),
\end{equation*}
Then there exists a $\widetilde{\theta}\in L^\infty\left([0,+\infty);H^N(\d x)\right)$, so that
\begin{equation*}
  \frac{K_g\theta_\epsilon-\rho_\epsilon}{K_g+1}(t)\rightarrow  \widetilde{\theta}(t) \text{ in } H^{N-\eta}(\d x),
\end{equation*}
for any $\eta>0$ as $\epsilon\to 0$. Furthermore, using the equation \eqref{4eq12}, we can show the equi-continuity in $t$. Indeed, for any $\left[t_{1}, t_{2}\right] \subset[0, \infty),$ and any test function $\chi(x)$ and $|\alpha| \leqslant N-1$
\begin{equation*}
\begin{aligned}
    & \int_{\mathbb{R}^3}\left[\partial_{x}^{\alpha} \left(\frac{K_g\theta_\epsilon-\rho_\epsilon}{K_g+1}\left(t_{2}\right)\right) -\partial_{x}^{\alpha}\left(\frac{K_g\theta_\epsilon-\rho_\epsilon}{K_g+1}\right)\left(t_{1}\right) \right] \chi(x) \d  x \\
    =& -\frac{1}{K_A(K_AE_0-3E_2/2)}\int_{t_{1}}^{t_{2}} \int_{\mathbb{R}^3}\left\langle A^\prime, \frac{1}{\epsilon} L\{\mathbf{I}-\mathbf{P}\} \nabla_{x} \partial_{x}^{\alpha} g_{\epsilon}\right\rangle \chi(x) \d  x \d  t\\
    \leqslant & \frac{C}{\epsilon} \left(\int_{t_{1}}^{t_{2}}\left(\mathcal{D}_{N}^{mic}\right) ^{2}\left(g_{\epsilon}\right)(t) \d  t\right)^{1/2}.
\end{aligned}
\end{equation*}
Thus the energy dissipation estimate \eqref{4eq2} implies the equi-continuity in $t .$ From the Arzelà-Ascoli Theorem,
\begin{equation*}
  \widetilde{\theta} \in C\left([0, \infty) ; H^{N-1-\eta}\left(\d x\right)\right) \cap L^{\infty}\left([0, \infty) ; H^{N-\eta}\left(\d x\right)\right),
\end{equation*}
and
\begin{equation}\label{4eq13}
  \frac{K_g\theta_\epsilon-\rho_\epsilon}{K_g+1} \rightarrow \widetilde{\theta}
  \qquad\text{ in } C\left([0, \infty) ; H^{N-1-\eta}\left(\d x\right)\right) \cap L^{\infty}\left([0, \infty) ; H^{N-\eta}\left(\d x\right)\right)
\end{equation}
as $\varepsilon \rightarrow 0$ for any $\eta>0$. Note that $\widetilde{\theta}=\frac{K_g\theta-\rho}{K_g+1}$ and $\theta=\frac{K_g\theta-\rho}{K_g+1}+\frac{1}{K_g+1}(\rho+\theta),$ and the relation \eqref{4eq11}, we get $\widetilde{\theta}=\theta$ and $\rho+\theta=0$

{\bf Convergence of $\mathcal{P} u_{\varepsilon}$.} Taking the Leray projection operator $\mathcal{P}$ on the second equation of \eqref{4eq7} gives
\begin{equation*}
  \partial_{t} \mathcal{P} \mathrm{u}_{\epsilon}+\frac{1}{E_2}\mathcal{P} \nabla_{x} \cdot\left\langle B^\prime, \frac{1}{\epsilon} L g_{\epsilon}\right\rangle=0.
\end{equation*}

Similar arguments as above deduce that there exists a divergence free $\widetilde{\mathrm{u}} \in L^{\infty}\left([0, \infty) ; H^{N}\left(\d x\right)\right)$ such that
\begin{equation}\label{4eq14}
  \mathcal{P} \mathrm{u}_{\epsilon} \rightarrow \widetilde{\mathrm{u}} \quad \text { in } \quad C\left([0, \infty) ; H^{N-1-\eta}\left(\d x\right)\right) \cap L^{\infty}\left([0, \infty) ; H^{N-\eta}\left(\d x\right)\right)
\end{equation}
as $\epsilon \rightarrow 0$ for any $\eta>0 .$ Note that $\widetilde{\mathrm{u}}=\mathcal{P} \mathrm{u}$ and \eqref{4eq9}, we have $\widetilde{\mathrm{u}}=\mathrm{u}$.

Similar to the standard calculations in \cite{ref6}, the local conservation laws can be rewritten as
\begin{equation}\label{4eq15}
  \left\{\begin{array}{l}
    \partial_{t} \rho_{\epsilon}+\frac{2K_g}{3}\frac{1}{\epsilon} \nabla_{x} \cdot \mathrm{u}_{\epsilon}+K_A\left(\frac{2K_gE_0}{3E_2}-1\right)\nabla_{x} \cdot\left(\mathrm{u}_{\epsilon} \theta_{\epsilon}\right)=\kappa_1\nabla_{x} \cdot\left[\nabla_{x} \theta_{\epsilon}\right]+\nabla_{x} \cdot R_{\epsilon, \theta}^{(1)}\\
    \partial_{t} \mathrm{u}_{\epsilon}+\frac{1}{\epsilon} \nabla_{x}\left(\rho_{\epsilon}+\theta_{\epsilon}\right)+\nabla_{x} \cdot\left(\mathrm{u}_{\epsilon} \otimes \mathrm{u}_{\epsilon}-\frac{\left|\mathrm{u}_{\epsilon}\right|^{2}}{3} I\right)= \frac{\nu_{*}}{E_2}\Sigma(\mathrm{u}_\epsilon)+\nabla_{x} \cdot R_{\epsilon, \mathrm{u}}\\
    \partial_{t} \theta_{\epsilon}+\frac{2}{3} \frac{1}{\epsilon} \nabla_{x} \cdot \mathrm{u}_{\epsilon}+\frac{2K_AE_0}{3E_2}\nabla_{x} \cdot\left(\mathrm{u}_{\epsilon} \theta_{\epsilon}\right)=\kappa_2 \nabla_{x} \cdot\left[\nabla_{x} \theta_{\epsilon}\right]+\nabla_{x} \cdot R_{\epsilon, \theta}^{(2)}
\end{array}\right.
\end{equation}
where
\begin{equation*}
  \begin{aligned}
  \kappa_1&=\frac{\frac{2E_0}{3E_2}K_g-1}{K_AE_0-3E_2/2}\left\langle\alpha_L(|v|) \left(\frac{|v|^2}{2}-K_A\right)^2v_1^2\right\rangle,\\
  \kappa_2&=\frac{2E_0/3E_2}{K_AE_0-3E_2/2} \left\langle\alpha_L(|v|)\left(\frac{|v|^2}{2}-K_A\right)^2v_1^2\right\rangle,\\
  \Sigma(u)&=\nabla u+(\nabla u)^T-\frac{2}{3}(\nabla\cdot u)I,
  \end{aligned}
\end{equation*}
and $R_{\epsilon, \mathrm{u}}, R_{\epsilon, \theta}^{(1)}, R_{\epsilon, \theta}^{(2)}$ have of the form
\begin{equation}\label{4eq16}
  \begin{aligned}
  &R+S\left(\epsilon\Big\langle\zeta(v), \partial_{t}g_{\epsilon}-T(g_{\epsilon},g_{\epsilon},g_{\epsilon})\Big\rangle\right.\\
  &+\Big\langle\zeta(v), v \cdot \nabla_{x}\{\mathbf{I}-\mathbf{P}\}g_{\epsilon}\Big\rangle -\Big\langle\zeta(v), Q(\{\mathbf{I}-\mathbf{P}\} g_{\epsilon},\{\mathbf{I}-\mathbf{P}\} g_{\epsilon})\Big\rangle\\
  &\left.-\Big\langle\zeta(v), Q(\{\mathbf{I}-\mathbf{P}\} g_{\epsilon}, \mathbf{P} g_{\epsilon})\Big\rangle -\Big\langle\zeta(v), Q(\mathbf{P} g_{\epsilon},\{\mathbf{I}-\mathbf{P}\} g_{\epsilon})\Big\rangle\right).
  \end{aligned}
\end{equation}
For $R_{\epsilon, \theta}^{(1)},$ we take $\zeta(v)= A^\prime$, $R=-K_A\left(\frac{2K_gE_0}{3E_2}-1\right)\mathrm{u}_{\epsilon}(\rho_{\epsilon}+\theta_{\epsilon})$ and $S=\frac{\frac{2E_0}{3E_2}K_g-1}{K_AE_0-3E_2/2}$ ; For $R_{\epsilon, \theta}^{(1)},$ we take $\zeta(v)=A^\prime$, $R=-\frac{2K_AE_0}{3E_2}\mathrm{u}_{\epsilon}(\rho_{\epsilon}+\theta_{\epsilon})$ and $S=\frac{2E_0/3E_2}{K_AE_0-3E_2/2}$ and for $R_{\epsilon, \mathrm{u}}$, we take $\zeta(v)= B^\prime$, $R=0$ and $S=\frac{1}{E_2}$.

{\bf The equations of $\theta$ and $\mathrm{u}$.} Decompose $u_{\epsilon}=\mathcal{P} u_{\epsilon}+\mathcal{Q} u_{\epsilon},$ where $\mathcal{Q}=\nabla_{x} \Delta_{x}^{-1} \nabla_{x} \cdot$ is a gradient. Denote $\widetilde{\theta}_{\epsilon}=\displaystyle\frac{K_g\theta_\epsilon-\rho_\epsilon}{K_g+1}$. Then from \eqref{4eq15}, the following equation is satisfied in the sense of distributions:
\begin{equation*}
  \partial_{t} \widetilde{\theta}_{\epsilon}+\nabla_{x} \cdot\left(\mathcal{P} \mathrm{u}_{\epsilon} \widetilde{\theta}_{\epsilon}\right)-\frac{3E_2}{2K_AE_0} \kappa_2 \Delta_{x} \widetilde{\theta}_{\epsilon}=\frac{1}{C_A}\nabla_{x} \cdot \widetilde{R}_{\epsilon, \theta},
\end{equation*}
where
\begin{equation}\label{4eq17}
  \begin{aligned}
  \widetilde{R}_{\epsilon, \theta}=&\frac{1}{C_A} R_{\epsilon, \theta}
  -\left(\frac{1}{K_A}+1\right)\mathcal{P} \mathrm{u}_{\epsilon}\left(\rho_{\epsilon}+\theta_{\epsilon}\right)\\
  &-\left(\frac{1}{K_A}+1\right)\mathcal{Q} \mathrm{u}_{\epsilon}\left(\rho_{\epsilon}+\theta_{\epsilon}\right)-\mathcal{Q} \mathrm{u}_{\epsilon} \widetilde{\theta}_{\epsilon}
  +\frac{3E_2}{2K_AE_0} \kappa_2 \nabla_{x}\left(\rho_{\epsilon}+\theta_{\epsilon}\right).
  \end{aligned}
\end{equation}

For any $T>0$, let $\varphi(t, x)$ be a text function satisfying
\begin{equation*}
  \varphi(t, x) \in C^{1}\left([0, T], C_{c}^{\infty}\left(\mathbb{R}_x^3\right)\right)
   \text{ with } \varphi(0, x)=1
\end{equation*}
and
\begin{equation*}
  \varphi(t, x)=0 \qquad \text{ for } t \geqslant T^{\prime}, \text{ where } T^{\prime}<T .
\end{equation*}
Note \eqref{4eq16} and use the global bounds \eqref{4eq1}, \eqref{4eq2}, and \eqref{4eq3}. It is easy to show that
\begin{equation}\label{4eq18}
  \int_{0}^{T} \int_{\mathbb{R}^3} \nabla_{x} \cdot R_{\epsilon}(t, x) \varphi(t, x) \d  x \d  t \rightarrow 0, \text{ as } \epsilon \rightarrow 0,
\end{equation}
where $R_{\epsilon}=R_{\epsilon, \mathrm{u}}$ or $R_{\epsilon, \theta}$. For other terms in \eqref{4eq17} noting the convergence \eqref{4eq8} and \eqref{4eq10}, together with \eqref{4eq18}, we have
\begin{equation}\label{4eq19}
  \int_{0}^{T} \int_{\mathbb{R}^3} \nabla_{x} \cdot \widetilde{R}_{\epsilon, \theta}(t, x) \varphi(t, x) \d  x \d  t \rightarrow 0 \quad \text { as } \quad \epsilon \rightarrow 0.
\end{equation}

From the convergence \eqref{4eq13} and \eqref{4eq14}, as $\epsilon \rightarrow 0$
\begin{equation*}
  \begin{aligned}
  \int_{0}^{T} \int_{\mathbb{R}^3} \partial_{t} \widetilde{\theta}_{\epsilon}(t, x) \varphi(t, x) \d  x \d  t \rightarrow &-\int_{\mathbb{R}^3}\left(\frac{K_g\theta_0-\rho_0}{K_g+1}\right)(x) \d  x\\
  &-\int_{0}^{T} \int_{\mathbb{R}^3} \theta(t, x) \partial_{t} \varphi(t, x) \d  x \d  t,
  \end{aligned}
\end{equation*}
\begin{equation*}
  \int_{0}^{T} \int_{\mathbb{R}^3} \Delta_{x} \widetilde{\theta}_{\epsilon} \varphi(t, x) \d  x \d  t \rightarrow \int_{0}^{T} \int_{\mathbb{R}^3} \theta(t, x) \Delta_{x} \varphi(t, x) \d  x \d  t,
\end{equation*}
and
\begin{equation*}
  \begin{aligned}
  &\int_{0}^{T} \int_{\mathbb{R}^3} \nabla_{x} \cdot\left(\mathcal{P} \mathrm{u}_{\epsilon} \widetilde{\theta}_{\epsilon}\right) \varphi(t, x) \d  x \d  t \\
  \rightarrow &-\int_{0}^{T} \int_{\mathbb{R}^3} \mathrm{u}(t, x) \theta(t, x) \cdot \nabla_{x} \varphi(t, x) \d  x \d  t.
  \end{aligned}
\end{equation*}

By taking the Leray projection $\mathcal{P}$ on the second equation of \eqref{4eq15}, we have the following equation:
\begin{equation*}
  \partial_{t} \mathcal{P} \mathrm{u}_{\epsilon}+\mathcal{P} \nabla_{x} \cdot\left(\mathcal{P} \mathrm{u}_{\epsilon} \otimes \mathcal{P} \mathrm{u}_{\epsilon}\right)-\frac{\nu_{*}}{E_2}\Delta_{x} \mathcal{P} \mathrm{u}_{\epsilon}=\mathcal{P} \nabla_{x} \cdot \widetilde{R}_{\epsilon, \mathrm{u}},
\end{equation*}
where
\begin{equation*}
  \widetilde{R}_{\epsilon, \mathrm{u}}=\frac{1}{E_2}R_{\epsilon, \mathrm{u}}-\mathcal{P} \cdot\left(\mathcal{P} \mathrm{u}_{\epsilon} \otimes \mathcal{Q} \mathrm{u}_{\epsilon}+\mathcal{Q} \mathrm{u}_{\epsilon} \otimes \mathcal{P} \mathrm{u}_{\epsilon}+\mathcal{Q} \mathrm{u}_{\epsilon} \otimes \mathcal{Q} \mathrm{u}_{\epsilon}\right).
\end{equation*}

Similar as above we can take the vector-valued test function $\psi(t, x)$ with $\nabla_{x} \cdot \psi=0,$ and prove that as $\epsilon \rightarrow 0$,
\begin{equation*}
  \begin{aligned}
  &\int_{0}^{T} \int_{\mathbb{R}^3}\left(\partial_{t} \mathcal{P} \mathrm{u}_{\epsilon}+\mathcal{P} \nabla_{x} \cdot\left(\mathcal{P} \mathrm{u}_{\epsilon} \otimes \mathcal{P} \mathrm{u}_{\epsilon}\right)-\frac{\nu_{*}}{E_2}\Delta_{x} \mathcal{P} \mathrm{u}_{\epsilon}\right) \cdot \psi(t, x) \d  x \d  t\\
  &\rightarrow-\int_{\mathbb{R}^3} \mathcal{P}u_{0}(x) \cdot \psi(0, x) \d  x\\
  &\qquad-\int_{0}^{T} \int_{\mathbb{R}^3}\left(u \cdot \partial_{t} \psi+\mathrm{u} \otimes \mathrm{u}: \nabla_{x} \psi-\frac{\nu_{*}}{E_2} \mathrm{u} \cdot \Delta_{x} \psi \right)\d  x \d  t,
  \end{aligned}
\end{equation*}
and
\begin{equation*}
  \int_{0}^{T} \int_{\mathbb{R}^3} \mathcal{P} \nabla_{x} \cdot \widetilde{R}_{\epsilon, \mathrm{u}}(t, x) \phi(t, x) \d  x \d  t \rightarrow 0 \quad \text { as } \quad \epsilon \rightarrow 0.
\end{equation*}

By collecting all above convergence results, we have shown that
\begin{equation*}
  (\mathrm{u}, \theta) \in C\left([0, \infty) ; H^{N-1}\left(\d x\right)\right) \cap L^{\infty}\left([0, \infty) ; H^{N}\left(\d x\right)\right)
\end{equation*}
satisfies the following incompressible Navier-Stokes equations
\begin{equation*}
\left\{
  \begin{array}{l}
    E_2\partial_{t} \mathrm{u}+E_2\mathrm{u} \cdot \nabla_{x} \mathrm{u}+\nabla_{x} p=\nu_{*} \Delta_{x} \mathrm{u}, \\
    \nabla_{x} \cdot \mathrm{u}=0, \\
    C_A\partial_{t} \theta+C_A\mathrm{u} \cdot \nabla_{x} \theta=\kappa_{*} \Delta_{x} \theta,
  \end{array}
\right.
\end{equation*}
with initial data:
\begin{equation*}
  \mathrm{u}(0, x)=\mathcal{P} \mathrm{u}_{0}(x), \quad \theta(0, x)=\frac{K_g\theta_0(x)-\rho_0(x)}{K_g+1},
\end{equation*}

\newpage


\begin{thebibliography}{99}
\bibitem{ref1} R. Alexandre,  On some related non homogeneous 3D Boltzmann models in the non cutoff case, {\em J. Math. Kyoto Univ.} {\bf 40} (2000), no. 3, 493-524.

\bibitem{AMUXY-1} R. Alexandre, Y. Morimoto, S. Ukai, C. J. Xu, and T. Yang,  Global existence and full regularity of the Boltzmann equation without angular cutoff, {\em Comm. Math. Phys.} {\bf 304} (2011), no. 2, 513-581.

\bibitem{AMUXY-2} R. Alexandre, Y. Morimoto, S. Ukai, C. J. Xu, and T. Yang, The Boltzmann equation without angular cutoff in the whole space I: Global existence for soft potential, {\em J. Funct. Anal.} {\bf 262} (2012), no. 3, 915-1010.

\bibitem{AMUXY-3} R. Alexandre, Y. Morimoto, S. Ukai, C. J. Xu, and T. Yang, The Boltzmann equation without angular cutoff in the whole space II: Global existence for hard potential, {\em Anal. Appl. (Singap.)} {\bf 9} (2011), no. 2, 113-134.

\bibitem{ref5} T. Allemand, Existence and conservation laws for the Boltzmann-Fermi-Dirac equation in a general domain, {\em C. R. Math. Acad. Sci. Paris} {\bf 348} (2010), no. 13-14, 763-767.

\bibitem{ref6} C. Bardos, F. Golse and C. D. Levermore, Fluid dynamic limits of kinetic equations I: formal derivation, {\em J. Stat. Phys.} {\bf 63} (1991), no. 1-2, 323-344.

\bibitem{ref7} C. Bardos, F. Golse and C. D. Levermore, Fluid dynamic limits of kinetic equations II: convergence proof for the Boltzmann equation, {\em Commun. Pure and Appl. Math.} {\bf 46} (1993), no. 5, 667-753.

\bibitem{ref8} C. Bardos and S. Ukai, The classical incompressible Navier-Stokes limit of the Boltzmann equation, {\em Math. Models Methods Appl. Sci. } {\bf 1} (1991), no. 2, 235-257.

\bibitem{ref9} D. Benedetto, M. Pulvirenti, F. Castella and R. Esposito,  On the weak-coupling limit for bosons and fermions, {\em Math. Models Methods Appl. Sci.} {\bf 15} (2005), no. 12, 1811-1843.

\bibitem{ref10} M. Briant, From the Boltzmann equation to the incompressible Navier-Stokes equations on the torus: A quantitative error estimate, {\em J. Differential Equations.} {\bf 259} (2015), no. 11, 6072-6141.

\bibitem{ref11} M. Briant, Perturbative theory for the Boltzmann equation in bounded domains with different boundary conditions, {\em Kinet. Relat. Models.} {\bf 10} (2017), no. 2, 329-371.

\bibitem{ref12} M. Briant and Y. Guo, Asymptotic stability of the Boltzmann equation with Maxwell boundary conditions, {\em J. Differential Equations.} {\bf 261} (2016), no. 12, 7000-7079.

\bibitem{ref13} M. Briant, S. Merino-Aceituno and C. Mouhot, From Boltzmann to incompressible Navier-Stokes in Sobolev spaces with polynomial weight, {\em Anal. Appl. (Singap.)} {\bf 17} (2019), no. 1, 85-116.

\bibitem{ref14} R. E. Caflisch, The fluid dynamic limit of the nonlinear Boltzmann equation, {\em Comm. Pure Appl. Math.} {\bf 33} (1980), no. 5, 651-666.

\bibitem{ref15} C. Cercignani, {\it The Boltzmann equation and its applications}, Springer-Verlag, New York, 1988.

\bibitem{ref16} C. Cercignani, R. Illner, and M. Pulvirenti, {\it The mathematical theory of dilute gases}, Springer-Verlag, New York, 1994, vol. 106.

\bibitem{ref17} P. A. M. Dirac, On the Theory of Quantum Mechanics. {\em Proceedings of the Royal Society of London A: Mathematical, Physical and Engineering Sciences.} {\bf 112} (1926), no. 762, 661-677.

\bibitem{ref18} J. Dolbeault, Kinetic models and quantum effects: A modified Boltzmann equation for Fermi–Dirac particles, {\em Arch. Ration. Mech. Anal.} {\bf 127} (1994) 101-131.

\bibitem{ref19} L. Erd\"{o}s, M. Salmhofer and H.-T. Yau, On the quantum Boltzmann equation, {\em J. Stat. Phys.} {\bf 116} (2004), no. 1-4, 367-380.

\bibitem{ref20} M. Escobedo, S. Mischler and M. A. Valle, Homogeneous Boltzmann equation in quantum relativistic kinetic theory, {\em Electronic Journal of Differential Equations.} Monograph, 4. Southwest Texas State University, San Marcos, TX, 2003.

\bibitem{ref21} M. Escobedo, S. Mischler and J. Velazquez, Asymptotic description of Dirac mass formation in kinetic equations for quantum particles, {\em J. Differential Equations.} {\bf 202} (2004), no. 2, 208-230.

\bibitem{ref22} E. Fermi, Zur Quantelung des idealen einatomigen Gases, {\em Zeitschrift für Physik} {\bf 36} (1926), no. 11-12, 902–912, English translation: A. Zannoni, On the Quantization of the Monoatomic Ideal Gas, Dec. 1999.

\bibitem{ref23} F. Filbet, J. Hu and S. Jin, A numerical scheme for the quantum Boltzmann equation with stiff collision terms, {\em ESAIM Math. Model. Numer. Anal.} {\bf 46} (2012), no. 2, 443-463.

\bibitem{ref24} R. T. Glassey, {\it The Cauchy problem in kinetic theory}, Society for Industrial and Applied Mathematics (SIAM), Philadelphia, PA, 1996.

\bibitem{GS-JAMS2011} P. T. Gressman and R. M. Strain, Global classical solutions of the Boltzmann equation without angular cut-off, {\em J. Amer. Math. Soc.} {\bf 24} (2011), no. 3, 771-847.

\bibitem{ref26} Y. Guo, The Vlasov-Maxwell-Boltzmann system near Maxwellians, {\em Comm. Pure Appl. Math.} {\bf 55} (2002), no. 9, 1104-1135.

\bibitem{ref27} Y. Guo, Classical solutions to the Boltzmann equation for molecules with an angular cutoff, {\em Arch. Ration. Mech. Anal.} {\bf 169} (2003) 305-353.

\bibitem{ref28} Y. Guo,  The Boltzmann equation in the whole space, {\em Indiana Univ. Math. J.} {\bf 53} (2004), no. 4, 1081-1094.

\bibitem{Guo-CPAM2006} Y. Guo. Boltzmann diffusive limit beyond the Navier-Stokes approximation. {\em Comm. Pure Appl. Math.}, {\bf 59} (2006), no. 5, 626-687.

\bibitem{ref29} Y. Guo, Decay and continuity of the Boltzmann equation in bounded domains, {\em Arch. Ration. Mech. Anal.} {\bf 197} (2010), no. 3, 713-809.

\bibitem{ref30} Y. Guo, J. Jang and N. Jiang, Local Hilbert expansion for the Boltzmann equation, {\em Kinet. Relat. Models.} {\bf 2} (2009), no. 1, 205-214.

\bibitem{ref31} Y. Guo, J. Jang and N. Jiang, Acoustic limit for the Boltzmann equation in optimal scaling, {\em Comm. Pure Appl. Math.} {\bf 63} (2010), no. 3, 337-361.

\bibitem{ref32} Y. Guo, C. Kim, D. Tonon and A. Trescases, Regularity of the Boltzmann equation in convex domains, {\em Invent. Math.} {\bf 207} (2017), no. 1, 115-290.

\bibitem{ref33} J. Jang and N. Jiang, Acoustic limit of the Boltzmann equation: Classical solutions. {\em Discrete Contin. Dyn. Syst.} {\bf 25} (2009), no. 3, 869-882.

\bibitem{Jiang-Xiong-2015} N. Jiang and L. Xiong, Diffusive limit of the Boltzmann equation with fluid initial layer in the periodic domain, {\em SIAM J. Math. Anal.} {\bf 47} (2015), no. 3, 1747-1777.

\bibitem{ref35} N. Jiang, C.-J. Xu and H. Zhao, Incompressible Navier-Stokes-Fourier limit from the Boltzmann equation: classical solutions, {\em Indiana Univ. Math. J.} {\bf 67} (2018), no. 5, 1817-1855.

\bibitem{ref36} L. D. Landau and E. M. Lifshitz, {\it Course of theoretical physics}, Vol. 5: Statistical physics, Pergamon Press, Oxford-Edinburgh-New York, 1968.

\bibitem{ref37} P. L. Lions, Compactness in Boltzmann’s equation via Fourier integral operators and applications III, {\em J. Math. Kyoto Univ.} {\bf 34} (1994), no. 3, 539-584.

\bibitem{ref38} X. Lu, On spatially homogeneous solutions of a modified Boltzmann equation for Fermi-Dirac particles, {\em J. Statist. Phys.} {\bf 105} (2001), no. 1-2, 353-388.

\bibitem{ref39} X. Lu, On the Boltzmann equation for Fermi-Dirac particles with very soft potentials: Global existence of weak solutions, {\em J. Differential Equations.} {\bf 245} (2008), pp. 1705-1761.

\bibitem{ref40} X. Lu and B. Wennberg, On stability and strong convergence for the spatially homogeneous Boltzmann equation for Fermi-Dirac particles, {\em Arch. Ration. Mech. Anal.} {\bf 168} (2003), no. 1, 1-34.

\bibitem{Nishida-1978} T. Nishida, Fluid dynamical limit of the nonlinear Boltzmann equation to the level of the compressible Euler equation, {\em Comm. Math. Phys.} {\bf 61} (1978), no. 2, 119-148.

\bibitem{ref43} L. W. Nordheim, On the kinetic methods in the new statistics and its applications in the electron theory of conductivity, {\em Proc. R. Soc. Lond. Ser. A} {\bf 119} (1928) 689-698.

\bibitem{Ou-WuL-2021} Z. Ouyang and L. Wu,
On the Quantum Boltzmann Equation near Maxwellian and Vacuum. {\em arXiv:2102.00657 [math.AP]}

\bibitem{ref44} L. D. Pitt, A compactness condition for linear operators on function spaces, {\em Journal of Operator Theory.} {\bf 1} (1979), no. 1, 49-54.

\bibitem{ref45} H. Spohn, Quantum kinetic equations, in: M. Fannes, C. Maes, A. Verbeure (Eds.), On Three Levels: Micro-Meso and Macro Approaches in Physics, in: {\em NATO Adv. Study Inst. Ser. B Phys.} vol. {\bf 324}, 1994, pp. 1-10.

\bibitem{ref46} E. A. Uehling and G.E. Uhlenbeck, Transport phenomena in Einstein–Bose and Fermi–Dirac gases I, {\em Phys. Rev.} {\bf 43} (1933) 552-561.

\bibitem{ref47} S. Ukai, On the existence of global solutions of mixed problem for non-linear Boltzmann equation, {\em Proc. Japan Acad.} {\bf 50} (1974), 179-184.

\bibitem{Zakrevskiy} T. Zakrevskiy, {\it Kinetic Models in the Near-Equilibrium Regime}. Thesis at Polytechnique 2015.

\end{thebibliography}
\end{document}